\documentclass[11pt,twoside, leqno]{article}

\usepackage{amssymb,amsmath,amsfonts,amsthm,color,mathrsfs}
\usepackage[Symbol]{upgreek}
\usepackage{txfonts}
\usepackage[nottoc,notlot,notlof]{tocbibind}
\usepackage[active]{srcltx}
\usepackage{hyperref}

\usepackage{graphicx}
\usepackage{epsfig}

\usepackage{txfonts}
\DeclareGraphicsRule{.1}{mps}{*}{}
\DeclareGraphicsRule{.2}{mps}{*}{}
\DeclareGraphicsRule{}{mps}{}{}

\allowdisplaybreaks
\pagestyle{myheadings}\pagestyle{myheadings}\markboth{\sc R. Jiang} {\sc Riesz transform on non-compact manifolds}

\textwidth=15cm \textheight=20.0cm \oddsidemargin 0.45cm
\evensidemargin 0.45cm

\parindent=13pt

\def\rr{{\mathbb R}}

\def\cn{{\mathbb N}}

\def\D{{\mathscr D}}

\def\ccc{{\mathscr C}}

\def\fz{\infty}

\def\dist{{\mathop\mathrm {\,dist\,}}}
\def\diam{{\mathop\mathrm {\,diam\,}}}
\def\loc{{\mathop\mathrm{\,loc\,}}}

\def\ez{\varepsilon}

\def\E{\mathscr{E}}

\def\r{\right}
\def\lf{\left}

\newtheorem{thm}{Theorem}[section]
\newtheorem{lem}[thm]{Lemma}
\newtheorem{prop}[thm]{Proposition}
\newtheorem{cor}[thm]{Corollary}

\newtheorem{quest}[thm]{Question}

\newtheorem{defn}[thm]{Definition}
\newtheorem{rem}[thm]{Remark}

%\newtheorem{thm}{Theorem}[section]
%\newtheorem{lem}{Lemma}[section]
%\newtheorem{prop}{Proposition}[section]
%\newtheorem{rem}{Remark}[section]
%\newtheorem{cor}{Corollary}[section]
%\newtheorem{defn}{Definition}[section]
%\newtheorem{example}{Example}[section]
%\numberwithin{thm}{section}
%\numberwithin{lem}{section}
%\numberwithin{prop}{section}
%\numberwithin{rem}{section}
%\numberwithin{cor}{section}
%\numberwithin{defn}{section}
%\numberwithin{example}{section}
\numberwithin{equation}{section}

\begin{document}
\arraycolsep=1pt
\author{Renjin Jiang}
\title{{\bf Riesz transform via heat kernel and harmonic functions on non-compact manifolds}
 \footnotetext{\hspace{-0.35cm} 2010 {\it Mathematics
Subject Classification}. Primary  58J35; 42B20; Secondary  58J05; 35B65; 35K05.
\endgraf{
{\it Key words and phrases: Riesz transform, harmonic functions, heat kernel, Poincar\'e inequality, doubling measure}
\endgraf}}
\date{}}
\maketitle

\begin{center}
\begin{minipage}{11.5cm}\small
{\noindent{\bf Abstract}.  Let $M$ be a complete  non-compact manifold satisfying the volume doubling  condition, with doubling index $N$ and reverse doubling index $n$, $n\le N$, both for large balls.
Assume a Gaussian upper bound for the heat kernel,  and
an $L^2$-Poincar\'e inequality outside a compact set.

If $2<n$, then we  show that for $p\in (2,n)$, $(R_p)$: $L^p$-boundedness of the Riesz transform,
$(G_p)$: $L^p$-boundedness  of the gradient of the  heat semigroup, and $(RH_p)$: reverse $L^p$-H\"older inequality for the gradient of
harmonic functions, are equivalent to each other.
Our characterization implies that for $p\in (2,n)$, $(R_p)$ has
an open ended property and is stable under gluing operations.
This substantially extends the well known equivalence of $(R_p)$ and $(G_p)$  from \cite{acdh} to more general settings, and is optimal in the sense that $(R_p)$ does not hold  for any $p\ge n>2$ on manifolds having at least two Euclidean ends of dimension $n$.

For $p\in (\max\{N,2\},\infty)$, the fact that $(R_p)$, $(G_p)$ and $(RH_p)$ are equivalent essentially follows from \cite{cjks16};  moreover, if $M$ is non-parabolic, then any of these conditions implies that $M$ has only one end.

For the proof, we develop a new criteria for boundedness of the Riesz transform,
which was nontrivially adapted from \cite{acdh}, and make an essential application
of results from \cite{cjks16}. Our result allows extensions to non-smooth settings.

%Our result provides a partial solution to \cite[Question 8.2]{cch06}, and new proofs to solutions of \cite[Question 8.1 \& Question 8.3]{cch06}.

}\end{minipage}
\end{center}
\vspace{0.2cm}
\tableofcontents

\section{Introduction}

\subsection{Background and motivations}

\hskip\parindent Let $M$ be a complete, connected and  non-compact Riemannian manifold. Denote by $d$ the geodesic distance, by $\mu$ the
Riemannian measure, and by $\mathcal{L}$  the non-negative Laplace-Beltrami operator   on $M$.  Let $\{e^{-t\mathcal{L}}\}_{t>0}$ be the heat semigroup.
 The inverse of the square root of $\mathcal{L}$ is given by
  $$\mathcal{L}^{-1/2}=\frac{\sqrt{\pi}}{2}\int_0^\infty e^{-s\mathcal{L}}\frac{\,ds}{\sqrt s}.$$
Denote by $\nabla$ the Riemannian gradient.

The study of the Riesz transform $\nabla \mathcal{L}^{-1/2}$ is one of the central topics of analysis on manifolds.
 Strichartz in 1983 \cite{str83}, and then Bakry in 1987 \cite{bak2}, provided  sufficient conditions
 on non-compact manifolds such that the Riesz transform
 is bounded for all $1<p<\infty$ (see Chen \cite{ch92} for the case $p=1$). Since then, many sufficient, or even in some cases necessary and sufficient, conditions
 for the boundedness of the Riesz transform have been provided;
see  for instance \cite{Al92,ac05,acdh,ca07,cch06,CMO15,cd99,cd03,lj91,lx10}. Let us review some related results.
Since the boundedness of Riesz transform on compact manifolds is not an issue (cf. \cite{str83}), we will only consider non-compact cases.

For each $p\in (1,\infty)$, we say that  $(R_p)$ holds,
 if the Riesz transform $|\nabla \mathcal{L}^{-1/2}|$ is bounded on $L^p(M)$.
Notice that $(R_2)$ holds automatically which can be seen by  integration by parts.
In the metric measure space $(M,d,\mu)$, denote by $B(x,r)$ the open ball with centre $x\in M$ and radius $r>0$ and by $V(x,r)$ its volume
$\mu(B(x,r))$.
One says that $M$ satisfies the volume doubling property (in short is  doubling)  if there exists  a constant  $C_{D}>1$ such that
$$ V(x,2r)\le C_{D}V(x,r), \leqno(D)$$
 for all $r>0$  and $x\in M$.
The heat semigroup has a smooth positive and symmetric kernel $p_t(x,y)$,  meaning that
$$e^{-t\mathcal{L}}f(x)=\int_Mp_t(x,y)f(y)\,d\mu(y)$$
for suitable  functions $f$.
One  says that the heat kernel satisfies a Gaussian upper  bound if
there exist $C,c>0$ such that for all $t>0$ and $x,y\in M$,
$$p_t(x,y)\le
  \frac{C}{V(x,{\sqrt t})}\exp\lf\{-\frac{d^2(x,y)}{ct}\r\}.\leqno(UE)
$$
Coulhon and Duong \cite{cd99} showed that the doubling condition together with a Gaussian upper bound of heat kernel is sufficient
for  $(R_p)$ for all $p\in (1,2)$.
Recently, Chen et al. \cite{CCFR} showed, a bit surprisingly, that
a  sub-Gaussian  upper bound of the  heat kernel could replace the Gaussian upper bound in the above result;
see \cite{lz17} for further developments.

The case $p>2$ is more difficult. Notice that if $(R_p)$ holds,
then it follows from the analytic property of the heat semigroup that
$$\||\nabla e^{-t\mathcal{L}}|\|_{p\to p}\le \||\nabla \mathcal{L}^{-1/2}\mathcal{L}^{1/2}e^{-t\mathcal{L}}|\|_{p\to p}\le  \frac{C}{\sqrt t}\leqno(G_p)$$
for all $t>0$; see \cite{st70} or \cite{acdh}. Above and in what follows, we use the notation $\|\cdot\|_p$ to denote the $L^p$ norm over $M$,
and the notation $\|\cdot\|_{p\to p}$ for the operator norm from $L^p$ to $L^p$,  for any $p\in [1,\infty]$.
A natural and longstanding question is as following.

\begin{quest}\label{question-gp}
Let $p\in (2,\infty)$. Does $(G_p)$ imply $(R_p)$?
\end{quest}

Auscher, Coulhon, Duong and Hofmann  in 2004 \cite{acdh} established a remarkable result, which shows that, under $(D)$ and a scale-invariant $L^2$-Poincar\'e inequality, $(G_{p_0})$ implies $(R_p)$
 for all $p\in (2,p_0)$, where $p_0\in (2,\infty]$.
The scale invariant $L^2$-Poincar\'e inequality means that there exists $C>0$ such that
for every ball $B$ and each $f\in C^1(\bar B)$, it holds
$$
\fint_{B}|f-f_B|^2\,d\mu\le
Cr_B^2\fint_{B}|\nabla f|^2\,d\mu,\leqno(P_{2})$$
where $f_B$ denotes the average of the integral of $f$ on $B$.
Notice that $(D)$ together with $(P_2)$ is equivalent to a two-sided Gaussian bound for the heat kernel;
see \cite{gri92,sal2}. By recent results from \cite{bf15,cjks16},
   one finally sees that $(G_p)\Longleftrightarrow(R_p)$ for each $p\in (2,\infty)$, under $(D)$ and $(P_2)$.

However, Question \ref{question-gp} in generality is still open; see \cite[Subsection 1.4]{acdh} and also \cite{bf15}. The  requirement of $(P_2)$ is  not necessary by looking at a manifold obtained by gluing two Euclidean
ends through a compact manifold smoothly; see \cite{cch06,cd99,gri-sal09}.
Here and below,  an {\em end} means,  an unbounded component of  a complete non-compact manifold $M$ outside a
compact subset $M_0$.

In \cite{cch06}, Carron, Coulhon and Hassell
showed that the Riesz transform  is $L^p$-bounded  for $2<p<n$, $n\ge 3$, if  $M$ is an $n$-dimensional manifold
with  a finite number of  Euclidean ends; the result has been further generalized to manifolds with conic ends
 by Guillarmou and Hassell \cite{gh08}, and by Carron \cite{ca16} to manifolds with quadratic Ricci curvature decay, i.e., for a fixed $x_M\in M$ and $C_M\ge 0$, it holds
$$Ric_M(x)\ge -\frac{C_M}{[d(x,x_M)+1]^2}.\leqno(QD)$$
Moreover, in \cite{cch06}, it has been showed that if $M$ has at least two ends, then the Riesz transform is not $L^p$-bounded
for any $p\ge n$. Indeed, by using $L^p$-cohomology, the following non-trivial result was proved in \cite{cch06}.

\begin{thm}[\cite{cch06}]\label{negative}
Suppose that $M$ has Ricci curvature bounded from below,  and  for some $N>2$
$ V(x,r)\lesssim r^N$, for all $x\in M$ and $r\ge 1. $
If there exists $C>0$ such that for any $f\in C^\infty_c(M)$ it holds
$$\|f\|_{\frac{2N}{N-2}}\le C\||\nabla f|\|_2.\leqno(S_{\frac{2N}{N-2},2}),$$
and $M$ has at least two ends, then the Riesz transform is not bounded on $L^p(M)$ for any $p\ge N$.
\end{thm}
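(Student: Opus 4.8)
The plan is to argue by contraposition through $L^p$-cohomology. Suppose the Riesz transform $\nabla\mathcal{L}^{-1/2}$ is bounded on $L^p(M)$ for some $p\ge N$. The key idea is that $L^p$-boundedness of the Riesz transform forces a Liouville-type rigidity: the first reduced $L^p$-cohomology $H^1_{p}(M)$ must vanish. Indeed, if $\alpha$ is a closed bounded $L^p$ $1$-form, then formally $\alpha = d g$ for some function $g$, and writing $g = \mathcal{L}^{-1/2}(\mathcal{L}^{1/2} g)$ one would like to conclude $\|\alpha\|_p = \|d g\|_p = \|\nabla\mathcal{L}^{-1/2}(\mathcal{L}^{1/2}g)\|_p \lesssim \|\mathcal{L}^{1/2} g\|_2$-type bounds via interpolation with the Sobolev inequality $(S_{\frac{2N}{N-2},2})$; the upshot is that every class in reduced $L^p$-cohomology is represented by a harmonic form, and under $(R_p)$ these must vanish. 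So the first step is to record precisely this implication: $(R_p)$ with $p\ge N$ together with $(S_{\frac{2N}{N-2},2})$ implies $\overline{H^1_p}(M) = 0$.

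The second step is to show, conversely, that a manifold with at least two ends and Ricci bounded below, satisfying the volume growth $V(x,r)\lesssim r^N$ and the Sobolev inequality $(S_{\frac{2N}{N-2},2})$, has $\overline{H^1_p}(M)\ne 0$ for $p\ge N$. The mechanism is classical: with two ends, the manifold is non-parabolic on (at least one of) the ends, and one constructs a bounded harmonic function $u$ that tends to distinct constants on the two ends — equivalently, a nonzero bounded harmonic function with finite Dirichlet-type energy controlled in the right space. Its differential $du$ is a nonzero closed bounded $1$-form; the content is that $du$ is not exact in the $L^p$ sense, i.e.\ $u$ itself is not in $L^p + \text{constants}$ along each end. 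This is where the volume upper bound $V(x,r)\lesssim r^N$ and the precise exponent $p\ge N$ enter: on a Euclidean-type end of dimension $N$, a function converging to a nonzero constant at infinity cannot be $L^p$ for $p\ge N$ (the constant itself is not $L^p$ there, since the volume grows like $r^N$), whereas for $p<N$ one can subtract off the limiting constants end-by-end. One must check that $du$ genuinely lies in $L^p$ (not merely $L^2$) — this uses gradient estimates for the harmonic function coming from $(S_{\frac{2N}{N-2},2})$ and Ricci lower bounds, via a Moser/De Giorgi iteration or a Caccioppoli argument adapted to the end structure.

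The main obstacle, I expect, is the nonvanishing direction: producing the bounded harmonic function with controlled gradient and verifying it represents a nonzero reduced $L^p$-cohomology class, since this requires a careful analysis of the end structure (non-parabolicity of ends, capacity estimates, and the interplay between the Sobolev inequality and the volume growth to pin down exactly in which $L^q$ spaces the harmonic function and its gradient live). The first (vanishing) direction is more formal and follows by combining the definition of reduced $L^p$-cohomology, the Riesz transform bound, and interpolation between $(S_{\frac{2N}{N-2},2})$ and $L^2$ energy bounds; once $\overline{H^1_p}(M)=0$ is known to fail for two-ended manifolds in the relevant range, the contradiction with $(R_p)$ completes the proof. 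I would organize the write-up so that the two directions are stated as separate lemmas, with the $L^p$-cohomology framework (definitions, the harmonic representative theorem under a Sobolev inequality) recalled first, and then cite the relevant structural results on ends of manifolds with Ricci bounded below to supply the harmonic function in the non-vanishing step.
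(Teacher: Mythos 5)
The paper does not prove Theorem~\ref{negative}: it is quoted verbatim and attributed to \cite{cch06}, and the text immediately before the statement simply reports that it ``was proved by using $L^p$-cohomology''. There is therefore no proof in this manuscript to compare against line by line. You have correctly identified the framework used in \cite{cch06} (reduced $L^p$-cohomology of $1$-forms, with the bounded harmonic function separating two ends supplying the test class), and you are also right that the Sobolev inequality supplies the required non-parabolicity to build that harmonic function and that the capacity/volume interplay is where the exponent $N$ enters.

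However, as laid out, both halves of your contradiction are stated in a way that does not survive scrutiny, and at least one of them is false. In your ``non-vanishing'' direction you assert that $\overline{H^1_p}(M)\neq 0$ for $p\ge N$ by exhibiting the class $[dh]$. But with $V(x,r)\lesssim r^N$ a standard cutoff $\phi_n$ supported in $B(o,2n)$ with $|\nabla\phi_n|\lesssim 1/n$ gives $\|h\,\nabla\phi_n\|_p^p\lesssim n^{-p}V(o,2n)\lesssim n^{N-p}\to 0$ whenever $p>N$; since $dh\in L^p$, this shows $d(h\phi_n)\to dh$ in $L^p$, i.e.\ $[dh]=0$ in $\overline{H^1_p}(M)$ for every $p>N$. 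So the class you propose to use is trivial in exactly the range the theorem is about (with $p=N$ as a borderline case). Symmetrically, your ``vanishing'' direction --- that $(R_p)$ plus the Sobolev inequality forces reduced $L^p$-cohomology classes, once represented by harmonic forms, to vanish --- has no mechanism behind it: $(R_p)$ is an operator bound, not a Liouville theorem, and in the very setting of the theorem there \emph{is} a nonzero closed and co-closed $L^p$ $1$-form, namely $dh$ itself (since $dh\in L^q$ for all $q>N/(N-1)$ when the Green function decays like $r^{2-N}$). The actual mechanism in \cite{cch06} is more delicate: one uses that $(R_{p'})$ holds automatically for the dual exponent $p'\in(1,2)$ (Coulhon--Duong under $(D)$ and the Gaussian bound implied by the Sobolev inequality), so that $\|df\|_q\simeq\|\mathcal L^{1/2}f\|_q$ for $q=p$ and $q=p'$ simultaneously, and then one works with the duality pairing between $\overline{H^1_{p'}}$ and $\overline{H^1_p}$ (or harmonic $L^p$ forms); the non-triviality lives on the $L^{p'}$ side for $p\ge N$, not on the $L^p$ side as you wrote. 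You should not expect the two lemmas you propose to be true as stated, and the write-up needs to be rebuilt around the dual exponent and the pairing, rather than around vanishing/non-vanishing of $\overline{H^1_p}$ alone.
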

%The notation $A\lesssim B$ and
%$A\sim B$ mean that there exist $C>1$, independent of  the main parameters, such that $A\le CB$ and $C^{-1}A\le B\le CA$, respectively.

Notice that the Sobolev inequality $(S_{\frac{2N}{N-2},2})$ together with $ V(x,r)\lesssim r^N$ implies $(UE)$ (cf. \cite{gri92,gri-sal09}), conversely $(UE)$ only implies a local Sobolev inequality (cf. \cite{bcs15,gri92,gri-sal09}). In particular, under $(UE)$, $(S_{\frac{2N}{N-2},2})$ may not hold; see \cite{var88}.

The above result has been further refined by Carron \cite[Theorem C]{ca16}. Notice that, in particular, in the above theorem and Carron's theorem,
the ends are not necessarily Euclidean or conic. In view of this, in \cite{cch06},
several questions, regarding relaxing the requirement that ends are Euclidean, had been proposed;
see following Question \ref{question-gluing}, Question \ref{question-Li} and Question \ref{question-Lie}.

In this paper, we provide a solution to Question \ref{question-gp} by relaxing the requirement of $(P_2)$,
but only for $p$ in the intervals $(2,n)$ and $(\max\{2, N\}, \infty)$; see Theorem \ref{main-n} and Theorem \ref{main-one} below.
As an application, we obtain stability under gluing operation and open ended property for the Riesz transform on manifolds with general ends.
Notice that the case $p\in (1,2)$ was well understood by \cite{cd99,CCFR}, as we recalled above.
We will only consider the case $p>2$ in this work.

Throughout the paper, we assume that $M$ is a non-compact, connected and complete manifold, that satisfies the
doubling condition $(D)$. We shall simply recognize  $M$ as the union of a compact set $M_0$ and one or more but finitely many ends $\{E_i\}_i$.
We fix a point $x_M\in M_0$ and assume without loss of generality that $\mathrm{diam}(M_0)=1$.

The doubling condition $(D)$ together with connectedness implies that there exist $0<\upsilon\le \Upsilon<\infty$ such that for any $x\in M$ and all $0<r<R<\infty$ it holds
\begin{equation}\label{reverse-doubling}
\left(\frac{R}{r}\right)^{\upsilon}\lesssim \frac{V(x,R)}{V(x,r)}\lesssim \left(\frac{R}{r}\right)^{\Upsilon};
\end{equation}
see for instance \cite[p. 213, Remark 8.1.15]{hkst}.
This further implies that there exists $0<N<\infty$ such that
$$\frac{V(x,R)}{V(x,r)}\lesssim \left(\frac{R}{r}\right)^N,\ \forall\,x\in M \ \& \ \forall \,1<r<R<\infty,
\leqno(D_{N})$$
and there exists $0<n\le N$ such that
$$\left(\frac{R}{r}\right)^n\lesssim \frac{V(x_M,R)}{V(x_M,r)},\ \forall\,1<r<R<\infty, \leqno(RD_{n})$$
where $x_M\in M_0$  is a fixed point.
In what follows, we call $n$ the lower dimension, and $N$ the upper dimension, of $M$. Moreover, we simply use $(D_{N})$
to indicate that $\mu$ is a doubling measure with $N$ being the upper dimension.

\begin{rem}\rm
(i) It holds obviously $\upsilon\le n\le N\le \Upsilon$. The examples of cocompact covering Riemannian manifolds with  polynomial growth deck transformation group and Lie groups of polynomial growth show it may happen that
$\upsilon<n$ and $N<\Upsilon$; see \cite{Al92,dng04a,hs09,var88} for instance.

(ii) Notice that we only need $(RD_{n})$ for a fixed point $x_M\in M_0$ and $R>r>1$.
Take weighted lines $(\mathbb{R},\,(1+|x|)^\alpha\,dx)$, $\alpha>0$, for example.
A small calculation shows that $(D_{\alpha+1})$ and $(RD_{\alpha+1})$ hold, but \eqref{reverse-doubling}
holds with $\upsilon=1$ and $\Upsilon=\alpha+1$; see \cite{hs09} and also \cite{ca16}.
Moreover, by using the doubling property and the fact $M_0$ is compact, one sees that $(RD_{n})$ holds if and only
 it holds for each $o\in M_0$ and all $1<r<R<\infty$ that
$\left({R}/{r}\right)^n\lesssim {V(o,R)}/{V(o,r)}.$

(iii) In many cases, such as manifolds with conic ends, or with ends like cocompact covering Riemannian manifolds with  polynomial growth deck transformation group or Lie groups of polynomial growth, one has $n=N$.
\end{rem}

By Theorem \ref{negative} and \cite[Theorem C]{ca16}, we already see that the (homogenous) dimension plays a key role in the Riesz transform.
It is then naturally to split the case $p>2$ into two categories: $p$ less than the dimension and $p$ bigger than the dimension.
We will  provide necessary and sufficient conditions for boundedness of the Riesz transform in both cases.

We first consider $p>2$ that is smaller than the lower dimension $n$, which means that $n>2$ and  the ends are non-parabolic;
see Subsection \ref{sub-largep} for the definition and \cite{ca16,lt92} for more materials.
Recall that if a manifold has two  Euclidean or conic ends of dimension two, then the Riesz transform is not
$L^p$-bounded for any $p>2$ by \cite{ca16,cd99}.

For $p>2$ that is bigger than the upper dimension $N$, we will consider manifolds with general ends (including small ones).
Notice that in this case boundedness of the Riesz transform will imply that the manifold can have only {\em one} end,
if $M$ is non-parabolic; see Theorem \ref{main-one} below.

\subsection{Necessary and sufficient conditions for small $p$  }\label{sub-smallp}
\hskip\parindent  In this part, we provide a necessary and sufficient condition for $L^p$-boundedness of the Riesz transform for small $p$,
i.e., $p$ less than the lower dimension.  Our approach depends heavily on recent developments on the relation of regularities of
harmonic functions and heat kernels from \cite{cjks16,ji15,jky14}, and is a nontrivial adaption of  the criteria for
the boundedness of the Riesz transform established in \cite{acdh} (see also \cite{Au07}) to our settings.

\begin{defn}[Poincar\'e inequality]
We say that a Poincar\'e inequality holds on ends ($(P_2^E)$, for short) of  $M$, if there exists $C>0$ such that
for any ball $B$ with $2B\cap M_0=\emptyset$,  and each $f\in C^1(\bar B)$,
$$
\fint_{B}|f-f_B|^2\,d\mu\le
Cr_B^2\fint_{B}|\nabla f|^2\,d\mu.\leqno(P_{2}^E)$$
\end{defn}
%Notice that $(P^E_2)$ holds if the Ricci curvature has quadratic decay $(QD)$ (see Buser \cite{bu82} or Theorem \ref{asy-poin}).
%Moreover, if $(P^E_2)$ holds, then it is easy to see that $(P_{2,\loc})$ holds on $M$.
%By \cite{bu82,hak,sal,sal2}, it is well known that $(P_{2,\loc})$ holds on any quasi-isometries of
%a Riemannian manifold with Ricci curvature bounded from below.

 Our first main result provides a solution to Question \ref{question-gp} for $p\in (2,n)$.
Notice that, under the doubling condition, our assumptions $(UE)$ and $(P^E_2)$ below are much weaker than $(P_2)$.
For example, $(UE)$ and $(P^E_2)$ hold on a manifold obtained by gluing two copies of Eulcidean space $\rr^n$ together, $n\ge 2$,
while $(P_2)$ does not hold; see \cite{ca07,ca16,cch06,cd99,gh08} for instance.
\begin{thm}\label{main-n}
Assume that $(D_{N})$ and $(RD_n)$ hold on $M$  with $2<n\le N<\infty$.
Suppose that $(UE)$ and $(P^E_2)$ hold.  Let $p\in (2,n)$. Then the following statements are equivalent.

(i) $(R_p)$ holds;

(ii) $(G_p)$ holds;

(iii) $(RH_p)$ holds, where $(RH_p)$ means that there exists $C>0$ such that for any ball $B$ with radius $r_B$ and any harmonic function $u$ on $3B$, it holds
$$\left(\fint_B|\nabla u|^p\,d\mu\right)^{1/p}\le \frac{C}{r_B}\fint_{2B}|u|\,d\mu. \leqno(RH_p)$$
\end{thm}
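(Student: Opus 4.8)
The natural strategy is to prove the chain of implications $(R_p)\Rightarrow(G_p)\Rightarrow(RH_p)\Rightarrow(R_p)$, since this is the cheapest way to organize an equivalence and each arrow exploits a different tool. The first implication $(R_p)\Rightarrow(G_p)$ is essentially classical and already sketched in the introduction: one writes $\nabla e^{-t\mathcal{L}}=(\nabla\mathcal{L}^{-1/2})\circ(\mathcal{L}^{1/2}e^{-t\mathcal{L}})$ and uses the analyticity of the heat semigroup on $L^p$ (which holds here thanks to $(UE)$, by the standard Davies--Gaffney/off-diagonal machinery) to bound $\|\mathcal{L}^{1/2}e^{-t\mathcal{L}}\|_{p\to p}\lesssim t^{-1/2}$; combined with $(R_p)$ this gives $(G_p)$ with no further hypotheses. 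So that step I would dispatch quickly by citing \cite{acdh,st70}.

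\textbf{The implication $(G_p)\Rightarrow(RH_p)$.} Here the idea is to realize a harmonic function locally through the heat semigroup. Given $u$ harmonic on $3B$ with $r_B$ its radius, I would test against a cutoff $\chi$ supported in $3B$ and equal to $1$ on $2B$, set $v=\chi u$, and use the identity
\[
\nabla u(x)=\nabla e^{-r_B^2\mathcal{L}}v(x)+\nabla\Big(v-e^{-r_B^2\mathcal{L}}v\Big)(x)
=\nabla e^{-r_B^2\mathcal{L}}v(x)-\nabla\!\int_0^{r_B^2}\!\mathcal{L}e^{-s\mathcal{L}}v\,ds\,(x)
\]
on $B$. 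For the first term, $(G_p)$ directly gives $\|\nabla e^{-r_B^2\mathcal{L}}v\|_{L^p(B)}\lesssim r_B^{-1}\|v\|_p\lesssim r_B^{-1}\mu(3B)^{1/p}(\fint_{3B}|u|^p)^{1/p}$; after one uses $(UE)$ together with the Moser-type mean value inequality for harmonic functions (which follows from $(UE)$ and the Caccioppoli inequality on ends, cf. \cite{cjks16}) to pass from the $L^p$ average of $u$ to its $L^1$ average on a slightly larger ball, one lands on the right-hand side of $(RH_p)$. For the error term one writes $\mathcal{L}v=\mathcal{L}(\chi u)=-2\nabla\chi\cdot\nabla u-(\mathcal{L}\chi)u$ — which is supported in the annulus $3B\setminus 2B$ and controlled by $r_B^{-2}$ times $|u|+r_B|\nabla u|$ there — and then uses the Gaussian bounds on $\nabla e^{-s\mathcal{L}}$ (a consequence of $(UE)$ and $(P^E_2)$ on ends, giving Li--Yau type gradient estimates away from $M_0$, or alternatively the $L^2$--$L^p$ off-diagonal decay of $\sqrt{s}\nabla e^{-s\mathcal{L}}$ from $(G_2)$ plus doubling) to see that the contribution on $B$, from a source at distance $\gtrsim r_B$, decays rapidly in $s/r_B^2$ and integrates to something $\lesssim r_B^{-1}(\fint_{3B}|u|)$. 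This is morally the reverse of the argument in \cite{acdh} that $(RH_{p_0})$ is a consequence of $(G_{p_0})$; the novelty is that one only has the Poincaré inequality $(P^E_2)$ on ends, so all the local elliptic regularity must be run either far from $M_0$ or replaced near $M_0$ by the weaker consequences of $(UE)$ alone.

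\textbf{The implication $(RH_p)\Rightarrow(R_p)$ — the main obstacle.} This is the heart of the matter and where I expect the real work to lie. The strategy, adapted from \cite{acdh,Au07}, is a good-$\lambda$ / Calderón--Zygmund argument for the non-integral operator $T=\nabla\mathcal{L}^{-1/2}$: one fixes a ball $B$ of radius $r_B$, splits $\mathcal{L}^{-1/2}=\mathcal{L}^{-1/2}(I-e^{-r_B^2\mathcal{L}})^m + \mathcal{L}^{-1/2}(I-(I-e^{-r_B^2\mathcal{L}})^m)$, uses the Gaussian bounds for the "local" part to get the $L^2\to L^2$ off-diagonal estimates needed for the Blunck--Kunstler/Auscher--Martell CZ theorem, and for the "global" part — which is $\sum c_k r_B\nabla e^{-kr_B^2\mathcal{L}}\mathcal{L}^{1/2}\mathcal{L}^{-1/2}$, a smoothing operator — one needs to show that $r_B\nabla e^{-r_B^2\mathcal{L}}$ maps $L^2(B)$ into $L^p(2^{j}B\setminus 2^{j-1}B)$ with decay in $j$, \emph{at the level of averages}, i.e. an $L^2_{\mathrm{loc}}$--$L^p_{\mathrm{loc}}$ reverse-Hölder improvement. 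The mechanism for that improvement is exactly $(RH_p)$: one decomposes $e^{-t\mathcal{L}}f$ near a ball as (its value) $=$ (a harmonic function on $3B$ matching the right boundary data) $+$ (a correction with controlled $L^2$ norm), applies $(RH_p)$ to the harmonic part to upgrade $L^2$ averages of its gradient to $L^p$ averages, and handles the correction by energy estimates using $(P^E_2)$ and $(UE)$. Running this on ends is fine; the genuinely new difficulty is the interaction with the compact piece $M_0$, where $(P^E_2)$ says nothing: there one must absorb $M_0$ into the "local scale" (balls of radius $\lesssim 1$, where $(UE)$ plus compactness gives everything for free) and, for balls that straddle $M_0$, use the reverse doubling $(RD_n)$ with $n>p$ to make the tail sums $\sum_j 2^{j(N/p - \text{something})}$ or $\sum_j 2^{-j(n-p)/\cdots}$ converge — this is precisely why the range is restricted to $p<n$. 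I would therefore structure this implication as: (1) reduce to the CZ decomposition and the two pieces; (2) prove the $L^2$ off-diagonal bounds for the local piece from $(UE)$; (3) prove the key "$(RH_p)$ self-improvement" lemma that $(RH_p)$ implies an $L^2$--$L^p$ reverse-Hölder estimate for $\sqrt{t}\,\nabla e^{-t\mathcal{L}}$ on balls; (4) assemble, using $(RD_n)$ to sum the tails and handling the three cases (small balls, ball-in-an-end, ball-meeting-$M_0$) separately. Step (3), and the bookkeeping across $M_0$ in step (4) using $n>p$, are where I expect essentially all the difficulty to be concentrated.
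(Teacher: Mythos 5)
Your outline of the implication cycle $(R_p)\Rightarrow(G_p)\Rightarrow(RH_p)\Rightarrow(R_p)$ matches the architecture of the paper, and you have correctly located the difficulty in the implication $(RH_p)\Rightarrow(R_p)$ and in the reverse doubling hypothesis being used to control the tail sums across $M_0$. But as written your plan does not close: the Auscher--Coulhon--Duong--Hofmann good-$\lambda$ machinery that you invoke, fed with $L^2\to L^p$ off-diagonal reverse H\"older bounds at exponent $p$ (which is exactly what $(RH_p)$ would produce for $\sqrt{t}\,\nabla e^{-t\mathcal L}$), delivers a weak type estimate at $p$ and then, after interpolation with $L^2$, only strong $L^q$-boundedness for $2<q<p$ --- strictly below $p$. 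To land on $(R_p)$ at the prescribed exponent you must first gain an $\varepsilon$: this is the open-endedness of $(RH_p)$, proved in the paper (Lemma~\ref{lem-open}) by applying the Caccioppoli inequality and $(P^E_2)$ on ends together with Gehring's lemma to upgrade $(RH_p)$ to $(RH_{p+\varepsilon})$ with $p+\varepsilon<n$; only then does one convert to $(G_{p+\varepsilon})$ via the \cite{cjks16} equivalence and run the CZ/good-$\lambda$ argument at the improved exponent. This step is absent from your proposal, and without it there is no route from $(RH_p)$ back to $(R_p)$ itself.

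A second idea you do not identify is the precise form of the modified good-$\lambda$ inequality that makes the ``ball meets $M_0$'' and ``ball in an end'' cases tractable. The paper introduces the auxiliary exceptional set
$G_\lambda=\bigl\{x:\|f\|_p/V(x_M,d(x,x_M)+1)^{1/p}>C_E\lambda\bigr\}$
of controlled measure $\lesssim(\|f\|_p/\lambda)^p$, and shows the good-$\lambda$ inequality for large balls holds \emph{relative to the complement of $G_\lambda$}; this is where Lemma~\ref{potential} (the $L^p\to L^p$ bound for $\mathcal L^{-1/2}$ on balls near $M_0$, valid precisely because $p<n$ via $(RD_n)$) enters. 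Saying ``use $(RD_n)$ to sum the tails'' omits this mechanism. Finally, on the $(G_p)\Rightarrow(RH_p)$ step you propose a direct computation with cutoffs and pointwise gradient bounds on $e^{-s\mathcal L}$; the Li--Yau gradient estimates you mention require two-sided Gaussian bounds and hence $(P_2)$ globally, which is not available here, so that alternative does not apply; the paper sidesteps this by deducing $(P_{2,\mathrm{loc}})$ from $(P^E_2)$ via Buser's inequality and citing the established equivalence $(G_p)\Leftrightarrow(RH_p)$ from \cite{cjks16} under $(D)$, $(UE)$, $(P_{2,\mathrm{loc}})$.
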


\begin{rem}\rm
(i) Under assumptions of the theorem, $(R_p)$ holds for all $p\in (1,2]$ from Coulhon-Duong \cite{cd99},
and does not hold for any $p\ge N$, if the manifold has
at least two ends, by Carron \cite[Theorem C]{ca16}.

(ii) The condition $(RH_p)$ is different from the true reverse H\"older inequalities used in \cite{ac05,shz05}.
Our formulation is natural since in case of manifolds with two Euclidean/conic ends,  the true reverse H\"older inequalities
fail for any $p>2$, but $(RH_p)$ holds for $p\in (2,n)$; see \cite[Section 7]{cjks16}.

(iii) The equivalence $(G_p)\Longleftrightarrow (RH_p)$ was proved in \cite{cjks16} under a local Poincar\'e inequality
$(P_{2,\loc})$ instead of $(P^E_2)$. Notice that $(P^E_2)$ implies $(P_{2,\loc})$; see Lemma \ref{pend-ploc} below.
\end{rem}

In view of Theorem \ref{negative} and \cite[Theorem C]{ca16}, the above result is rather optimal if the manifold has at least two ends. It is worth to note that our method are completely different from those from \cite{ca07,ca16,cch06,gh08},
in particular, our assumptions $(D)$, $(UE)$ and $(P^E_2)$ all are stable
under quasi-isometries. As a consequence, our results work with the Laplace-Beltrami operator replaced
by any {\em uniformly elliptic operator} of divergence form, and more generally, work on Dirichlet metric measure spaces; see
Section \ref{extension-mms}.

 The condition $(P^E_{2})$ is satisfied on an end, if the Ricci curvature has quadratic decay $(QD)$ (see Buser \cite{bu82} or Theorem \ref{asy-poin}), or the end is quasi-isometric to one of the following manifold removing a compact set: a co-compact covering manifold with  polynomial growth deck transformation group, Lie group of polynomial growth as well as conic manifold; see \cite{Al92,chy,cl04,dng04a,lh99,var88,ya75} for instance.

The condition $(UE)$ is a global condition and seems to be more restrictive. However, recent results of  Grigor'yan and Saloff-Coste \cite{gri-sal09,gri-sal16} shed some light on this point.  In particular, by  \cite{gri-sal16} one sees that  if each end $E_i$ is isometric to $\widetilde{M}_i\setminus K_i$, where $\widetilde{M}_i$  is a complete manifold  satisfying $(UE)$ and $K_i$ is a compact set, then the manifold $M$ satisfies $(UE)$; see the final section.

We next provide some further necessary and sufficient conditions for the boundedness of the Riesz transform.

\begin{defn}  Let $p\in (2,\infty]$.
We say that  the reverse
$L^p$-H\"older inequality for gradients of harmonic functions holds on ends of   $M$ (for short, $(RH^E_{p})$), if  there exists $C>0$
 such that for each ball $B$ with $3B\cap M_0=\emptyset$, and each  harmonic function $u$ on $3B$, it holds
$$\left(\fint_{B}|\nabla u|^p\,d\mu\right)^{1/p}\le \frac {C}{r_B} \fint_{2B}|u|\,d\mu.
\leqno (RH^E_{p})$$
\end{defn}

The observation below is that, $(RH_p)$ is stable under gluing operation, if $p<n$; see
Lemma \ref{stable-rhp} and Lemma \ref{har-pend-ploc} below.
\begin{thm}\label{main}
Assume that $(D_{N})$ and $(RD_n)$ hold on $M$  with $2<n\le N<\infty$.
Suppose that $(UE)$ and $(P^E_2)$ hold. Let $p\in (2,n)$. Then $(R_p)$ holds on $M$, if and only if,   $(RH^E_p)$ holds.
\end{thm}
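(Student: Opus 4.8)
The plan is to deduce Theorem \ref{main} from Theorem \ref{main-n} by showing that, under the standing hypotheses $(D_N)$, $(RD_n)$, $(UE)$ and $(P^E_2)$, the global condition $(RH_p)$ is equivalent to the ``on ends'' version $(RH^E_p)$ for $p\in(2,n)$. One implication is trivial: $(RH_p)$ restricts to balls $B$ with $3B\cap M_0=\emptyset$ and gives $(RH^E_p)$ immediately. So the content is the converse, $(RH^E_p)\Longrightarrow(RH_p)$, which I would prove by splitting balls into three regimes according to how $B$ sits relative to the compact piece $M_0$: (a) balls with $3B\cap M_0=\emptyset$, handled by $(RH^E_p)$ directly; (b) ``small'' balls with $r_B\le c$ for a fixed small $c$ depending on $\mathrm{diam}(M_0)=1$, say balls meeting $M_0$ but with controlled radius; and (c) ``large'' balls, where $3B$ contains $M_0$ or $r_B$ is bounded below. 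The heuristic is that $(RH_p)$ is a local regularity statement for harmonic functions, and the only place where it could fail is near $M_0$, where the geometry is not controlled by $(P^E_2)$; but $M_0$ is compact and bounded, so its influence should be absorbable.

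For the small-ball regime (b), I would invoke that $(P^E_2)$ plus the compactness of $M_0$ yields a local Poincar\'e inequality $(P_{2,\mathrm{loc}})$ on all of $M$ — this is Lemma \ref{pend-ploc} in the paper. Together with $(D)$ and $(UE)$ (which gives local two-sided Gaussian bounds via local Poincar\'e, cf. \cite{gri92,sal2}), local elliptic regularity theory (De Giorgi--Nash--Moser, valid on a compact region with a doubling measure and a Poincar\'e inequality) gives Lipschitz / Cacciopoli-type estimates for harmonic functions on small balls, hence $(RH_\infty)$ locally, which a fortiori gives $(RH_p)$ on such balls — this is essentially the content of Lemma \ref{har-pend-ploc} referenced in the paper. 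For the large-ball regime (c), the key point is that a large ball $B$ can be decomposed: on the part of $B$ lying in the ends (at definite distance from $M_0$) apply $(RH^E_p)$ after a Whitney-type covering by sub-balls of comparable size whose triples avoid $M_0$; near $M_0$ use that $M_0$ has bounded diameter so the ``bad'' region has bounded measure, and combine with the mean-value/subsolution estimate for $|\nabla u|$ and the reverse-doubling $(RD_n)$ to reabsorb the central contribution into $\frac{C}{r_B}\fint_{2B}|u|$. The stability of $(RH_p)$ under gluing — Lemma \ref{stable-rhp} — encapsulates exactly this covering-and-reabsorption argument, and I would lean on it.

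Concretely, the chain I would write is: $(RH^E_p)$ $\Longrightarrow$ [Lemma \ref{har-pend-ploc} + Lemma \ref{pend-ploc}] $(RH_p)$ on balls near $M_0$, and $(RH^E_p)$ on balls away from $M_0$; then [Lemma \ref{stable-rhp}] these two local pieces glue to the global $(RH_p)$; finally invoke Theorem \ref{main-n}(iii)$\Leftrightarrow$(i) to conclude $(R_p)$. Conversely, $(R_p)\Rightarrow(RH_p)$ by Theorem \ref{main-n}, and $(RH_p)\Rightarrow(RH^E_p)$ trivially by restriction. So the whole proof is a short deduction once the two gluing/localization lemmas are in hand.

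The main obstacle, and the only genuinely nontrivial step, is the reabsorption near $M_0$ in the large-ball case: one must estimate $\left(\fint_B|\nabla u|^p\right)^{1/p}$ for $u$ harmonic on $3B$ when $B$ is large and straddles $M_0$, using only $(RH^E_p)$ (which says nothing about balls touching $M_0$) together with the interior estimate near $M_0$. The subtlety is that $\fint_{2B}|u|$ is an $L^1$ average over a large ball, so one needs Harnack-type or Poincar\'e-type control to pass from the $L^1$ norm of $u$ on $2B$ to pointwise/$L^2$ control of $u$ on a smaller central ball, and this requires $(P^E_2)$-driven Moser iteration on the ends plus the local theory near $M_0$ — exactly where the hypotheses $2<n$ (non-parabolicity, so harmonic functions with the right growth behave well) and the reverse doubling $(RD_n)$ enter to ensure the central term is of lower order as $r_B\to\infty$. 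I expect this to be carried out in Lemma \ref{stable-rhp}, and the proof of Theorem \ref{main} itself to be essentially a one-paragraph assembly of these ingredients.
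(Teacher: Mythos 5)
Your proposal is correct and takes essentially the same approach as the paper: the backward implication $(R_p)\Rightarrow(RH_p)\Rightarrow(RH^E_p)$ follows from Theorem~\ref{main-n} together with trivial restriction, and the forward implication uses Lemma~\ref{har-pend-ploc} to upgrade $(RH^E_p)$ to $(RH_{p,\loc})$ near $M_0$, Lemma~\ref{stable-rhp} (where $(RD_n)$ and $p<n$ make the contribution near $M_0$ lower order) to glue, and then Theorem~\ref{main-n} again to pass to $(R_p)$. The paper's proof of Theorem~\ref{main} is exactly this two-line assembly.
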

The advantage  is that  $(RH^E_p)$ is a condition much easier to verify than $(G_p)$.
An immediate consequence of the above result is that compact metric perturbation does not affect $(R_p)$, if $p<n$.
We also note that,  the above result implies the stability of $(R_p)$ ($p<n$) under gluing operations, see
Theorem \ref{stability-gluing-main} and Corollary \ref{stability-gluing} below.

An open-ended property of the Riesz transform follows from the above result.
\begin{cor}\label{cor-open}
Assume that $(D_{N})$ and $(RD_n)$ hold on $M$  with $2<n\le N<\infty$.
Suppose that $(UE)$ and $(P^E_2)$ hold. Let $p\in (2,n)$. If $(R_p)$ holds, then there exists $\epsilon>0$ such that $p+\epsilon<n$
and $(R_{p+\epsilon})$ holds.
\end{cor}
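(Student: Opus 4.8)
The plan is to derive the open-ended property from Theorem~\ref{main} combined with the self-improvement (Gehring-type) property of reverse H\"older inequalities. By Theorem~\ref{main}, for every $q\in(2,n)$ one has $(R_q)\Longleftrightarrow(RH^E_q)$; thus it suffices to show that $(RH^E_p)$ self-improves, i.e.\ that $(RH^E_{p+\ez})$ holds for some $\ez\in(0,n-p)$, and then to apply Theorem~\ref{main} again with $q=p+\ez\in(2,n)$. This is the natural route: in contrast to $(G_p)$, which does not obviously improve, the condition $(RH^E_p)$ is in essence a reverse H\"older inequality for $|\nabla u|$, and reverse H\"older classes are open.

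For the self-improvement, I would proceed as follows. Let $u$ be harmonic on a large fixed dilate of a ball $B$ that avoids $M_0$. Replacing $u$ by $u-u_{2B}$ in $(RH^E_p)$ and using the Poincar\'e inequality $(P_2^E)$ on $2B$ (the factor $r_B^{-1}$ in $(RH^E_p)$ cancels against the $r_B$ from Poincar\'e), one recasts $(RH^E_p)$ as a genuine reverse H\"older inequality for the single function $g:=|\nabla u|$, of the form $\big(\fint_{B}g^p\,d\mu\big)^{1/p}\ls\big(\fint_{\sigma B}g^2\,d\mu\big)^{1/2}$, valid for all such balls; since every sub-ball of a fixed ball on which $u$ is harmonic is again of this type, this reverse H\"older inequality holds on an entire family of balls. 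Because $(M,d,\mu)$ is doubling, Gehring's self-improving lemma applies and produces $\ez_0>0$ and $C>0$ with $\big(\fint_{B}g^{p+\ez}\,d\mu\big)^{1/(p+\ez)}\ls\big(\fint_{\sigma' B}g^2\,d\mu\big)^{1/2}$ for every $\ez\in(0,\ez_0)$. It remains to run this estimate backwards into the required shape: by the Caccioppoli inequality for harmonic functions (integration by parts against a cut-off, needing no curvature hypothesis) and the mean value / De Giorgi--Nash--Moser inequality for harmonic functions on ends --- both available from $(D)$ and $(P_2^E)$, cf.\ Lemma~\ref{pend-ploc} --- one gets $\big(\fint_{\sigma' B}|\nabla u|^2\,d\mu\big)^{1/2}\ls r_B^{-1}\fint_{\sigma'' B}|u|\,d\mu$, which yields $(RH^E_{p+\ez})$ up to fixed dilation constants. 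Taking $\ez<\min\{\ez_0,\,n-p\}$ ensures $p+\ez\in(2,n)$, and Theorem~\ref{main} then gives $(R_{p+\ez})$, as desired.

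The main obstacle I anticipate is not conceptual but the bookkeeping of ball dilation constants: Gehring's lemma, the Caccioppoli step and the mean value inequality each enlarge the ambient ball, so to recover $(RH^E_{p+\ez})$ with the precise constants $2$ and $3$ one must either carefully track these dilations --- exploiting, e.g., the stability mechanisms behind Lemma~\ref{stable-rhp} and Lemma~\ref{har-pend-ploc} --- or observe, as is implicit in the proof of Theorem~\ref{main}, that $(R_q)$ already follows from the a priori weaker reverse H\"older inequality in which the constants $2$ and $3$ are replaced by arbitrary fixed dilation factors. A secondary point is ensuring that the Moser-type estimates for harmonic functions hold uniformly over the relevant ends, which is exactly where $(P_2^E)$ --- rather than only $(P_{2,\loc})$ --- comes in.
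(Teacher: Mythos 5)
Your proposal follows essentially the same route as the paper: the paper proves this corollary by combining Theorem~\ref{main} with Lemma~\ref{lem-open}, and Lemma~\ref{lem-open} is proved exactly as you outline --- recast $(RH^E_p)$ as a reverse H\"older inequality for $|\nabla u|$ via $(P_2^E)$ applied to $u-u_{2B}$, invoke Gehring's lemma to gain an exponent $\ez$, return to the original form via Caccioppoli and the mean-value (Harnack) inequality, and then recover $(RH_{p+\ez})$ using the gluing lemmas \ref{stable-rhp} and \ref{har-pend-ploc}. Your flagged concerns about dilation constants and about needing $(P_2^E)$ uniformly on ends are real but minor, and the paper handles them the same way you suggest (a covering argument and the stability lemmas).
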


Further, $(RH^E_\infty)$ and $(P^E_2)$ hold if the Ricci curvature has quadratic decay.
\begin{cor}\label{main-1}
Assume that $(D_{N})$ and $(RD_n)$ hold on $M$  with $2<n\le N<\infty$.  If $(UE)$ holds, and there exists $C_M>0$ such that for each $x\in M$,
$$Ric_M(x)\ge -\frac{C_M}{[d(x,x_M)+1]^2},\leqno(QD)$$
then $(R_p)$ holds for $p\in (1,n)$.
\end{cor}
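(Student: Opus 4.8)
The plan is to deduce Corollary \ref{main-1} by combining Theorem \ref{main} (or directly Theorem \ref{main-n}) with the fact that the quadratic Ricci decay hypothesis $(QD)$ forces both the Poincar\'e condition $(P_2^E)$ on ends and the strong reverse H\"older estimate $(RH_\infty^E)$ on ends. Since $(RH_\infty^E)$ trivially implies $(RH_p^E)$ for every finite $p>2$ (by H\"older's inequality on $B$ with normalized measure $\fint_B$), Theorem \ref{main} then yields $(R_p)$ for all $p\in(2,n)$, and the range $p\in(1,2]$ is already covered by the Coulhon--Duong result quoted above under $(D)$ and $(UE)$. Thus the content of the corollary reduces to two local/geometric facts about ends with quadratic curvature decay.

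First I would verify $(P_2^E)$. On an end $E$, a ball $B=B(x,r)$ with $2B\cap M_0=\emptyset$ lies at distance comparable to $d(x,x_M)$ from the base point, so on $2B$ the curvature lower bound $(QD)$ reads $\mathrm{Ric}\ge -C_M/[d(x,x_M)+1]^2$, which is of the scale-invariant form $\mathrm{Ric}\ge -(K/\rho)^2$ with $\rho\approx d(x,x_M)\gtrsim r_B$ (after possibly shrinking to a fixed fraction of the distance, absorbing $M_0$'s diameter normalization $\mathrm{diam}(M_0)=1$). Buser's inequality \cite{bu82} then gives an $L^2$-Poincar\'e inequality on $B$ with the correct constant $Cr_B^2$, uniformly in $B$; this is exactly the statement recorded as Theorem \ref{asy-poin} in the paper, so I would simply cite it.

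Next I would establish $(RH_\infty^E)$. For a ball $B$ with $3B\cap M_0=\emptyset$ and $u$ harmonic on $3B$, the same rescaling puts us in the regime $\mathrm{Ric}\ge -(K/r_B)^2 \cdot (\text{small})$ on $3B$; by Cheng--Yau gradient estimates (or Li--Schoen type bounds) for positive harmonic functions under such a scaled lower Ricci bound, together with the mean value / Harnack inequality for harmonic functions on manifolds with Ricci bounded below, one obtains
$$\sup_B|\nabla u|\le \frac{C}{r_B}\sup_{2B}|u|\le \frac{C'}{r_B}\fint_{2B}|u|,$$
where the last step uses the elliptic Harnack inequality (valid since $(D)$ and $(P_2)$ hold locally on $3B$, which follows from the curvature bound). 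Taking $L^\infty$ on the left is $(RH_\infty^E)$, whence $(RH_p^E)$ for all $p$.

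The main obstacle is purely bookkeeping with scales: one must check that for \emph{every} ball touching an end the relevant dilate ($2B$ or $3B$) stays inside the end and at distance $\gtrsim r_B$ from $x_M$, so that $(QD)$ becomes a genuinely scale-invariant (indeed asymptotically flat) curvature bound on that dilate, with constants independent of the ball. This is where the normalization $\mathrm{diam}(M_0)=1$ and the split $M=M_0\cup\bigcup_i E_i$ are used; once this is in place, Buser's inequality and the Cheng--Yau/Harnack machinery apply verbatim with uniform constants, and the corollary follows from Theorem \ref{main}. I would also remark that this argument is where the restriction to $p<n$ is essential: by Theorem \ref{negative} and \cite[Theorem C]{ca16}, $(R_p)$ genuinely fails for $p\ge n$ on a manifold with two Euclidean ends of dimension $n$, so no choice of local estimate can push the range further.
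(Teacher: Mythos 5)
Your proposal is correct and follows essentially the same route as the paper: the paper's proof cites Theorem~\ref{asy-poin} (Buser's inequality under $(QD)$) for $(P^E_2)$, Lemma~\ref{asy-yau} (Yau's gradient estimate plus the elliptic Harnack inequality of Lemma~\ref{harnack}, applied to $u+\|u\|_{L^\infty}+\varepsilon$) for $(RH^E_\infty)$, then uses $(RH^E_\infty)\Rightarrow(RH^E_p)$ and Theorem~\ref{main} for $p\in(2,n)$, and \cite{cd99} for $p\in(1,2]$ — exactly your plan. The only minor point of care, which the paper handles explicitly in Lemma~\ref{asy-yau}, is that Yau's estimate applies to \emph{positive} harmonic functions, so one shifts by $\|u\|_{L^\infty(\frac32 B)}+\varepsilon$ before invoking the gradient bound; your sketch leaves this slightly implicit.
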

Carron \cite[Thoerem A]{ca16} established that if a manifold satisfies a volume comparison condition (VC), the (RCE) condition (relatively connected to an end), $(QD)$ and $(RD_n)$, then $(R_p)$ holds for $p\in (1,n)$. See also Devyver \cite[Theorem 5]{de14}
for a related result. Note that Carron's assumptions imply $(D)$
and $(UE)$; see \cite[Section 2]{ca16}. However, after a careful reading of \cite[Section 3 and Section 4]{ca16} we find that
Carron's proof indeed works under our assumptions in the above Corollary.
The approach \cite{ca16} used Li-Yau's Harnack inequality (cf. \cite{ly86}) to deduce point-wise behaviors of the
Riesz kernel, which depends on the smooth structure.
Our approach (after applying Theorem \ref{main-n} and Theorem \ref{main}) needs to verify the regularity of harmonic functions
on the ends, and can be applied to deal with general uniformly elliptic operators on such manifolds (see Section 5).

\subsection{Necessary and sufficient conditions for large $p$}\label{sub-largep}
\hskip\parindent Theorem \ref{main-n} seems to be rather optimal if the manifold has at least two ends, however,
it is less satisfied if the manifold  has only one end, where in general the Riesz transform may be bounded on $L^p(M)$
for some $p>N$; see \cite{ca16,gh08,lh99} for instance. We next provide
a necessary and sufficient condition for $p>N$ under the same requirements as Theorem \ref{main-n} except
that we do not need the reverse doubling condition, which however holds automatically. The result in this part essentially follows
from \cite{cjks16}.
We shall denote   $\max\{A,\,B\}$
by  $A\vee B$.

Let us recall some notation regarding parabolic and hyperbolic manifolds; see \cite{ca16} for instance.
Let $p\in (1,\infty)$. For a bounded open set $O\subset M$, define its $p$-capacity by
$$\mathrm{Cap}_p(O):=\inf\left\{\int_{M}|\nabla \psi|^p\,d\mu,\, \psi\in C^\infty_c(M), \psi\ge 1 \, \mbox{on}\, O\right\}.$$
We say that $M$  is $p$-hyperbolic if the $p$-capacity of some (equivalently, any) bounded open subsets is positive.
A non-$p$-hyperbolic manifold is called $p$-parabolic. A $2$-hyperbolic manifold is called non-parabolic.

\begin{thm}\label{main-one}
Assume that $(D_{N})$ holds on $M$  with $0<N<\infty$, and that $(UE)$ and $(P^E_2)$ hold.  Let $p\in (N\vee 2,\infty)$. Then the following statements are equivalent.

(i) $(R_p)$ holds;

(ii) $(RH_p)$ holds;

(iii) $(G_p)$ holds.

Moreover, if $M$ is non-parabolic, then any of the three conditions implies that $M$ can have only one end.
\end{thm}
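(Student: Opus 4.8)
The plan is to prove Theorem~\ref{main-one} by a two-step strategy: first establish the equivalence of $(R_p)$, $(RH_p)$ and $(G_p)$ for $p\in(N\vee 2,\infty)$, then deduce the ``only one end'' statement under non-parabolicity. For the equivalences, the essential point is that when $p>N$ the relevant analysis localizes. Since $(P_2^E)$ implies the local Poincar\'e inequality $(P_{2,\loc})$ (Lemma~\ref{pend-ploc}), the hypotheses $(D_N)$, $(UE)$, $(P_{2,\loc})$ are exactly the framework of \cite{cjks16}; their results give $(G_p)\Longleftrightarrow(RH_p)$ for all $p\in(2,\infty)$ directly. The remaining implication $(G_p)\Longrightarrow(R_p)$ for $p>N$ does not need the $\acdh$-type machinery at full strength: when $p$ exceeds the doubling dimension $N$, one can run the Calder\'on--Zygmund decomposition argument of \cite{acdh} using the \emph{local} Poincar\'e inequality only, because the ``bad'' terms that ordinarily require a global Poincar\'e inequality are controlled at unit scale by $(D_N)$ together with $(UE)$ (the Gaussian bound provides the off-diagonal decay, and $r^{-N}$ integrability of the volume growth at large scales makes the tail sums converge for $p>N$). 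I would follow \cite{cjks16} verbatim here, or cite it, since the authors indicate this part ``essentially follows from \cite{cjks16}''; the implication $(R_p)\Longrightarrow(G_p)$ is the elementary one recalled in the introduction just before Question~\ref{question-gp}. Finally, one checks that $(RD_n)$ is not needed: for $p>N$, any reverse doubling index works (indeed $\upsilon>0$ always holds by \eqref{reverse-doubling}, so the reverse doubling condition is automatic, as the statement asserts).

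For the ``only one end'' conclusion, assume $M$ is non-parabolic and that $(R_p)$ holds for some $p>N\vee 2$. The idea is to argue by contradiction using $L^p$-cohomology / capacity in the spirit of \cite{cch06} and \cite[Theorem C]{ca16}. If $M$ had at least two ends $E_1$ and $E_2$, non-parabolicity of $M$ forces at least one end, say $E_1$, to be non-parabolic; one then produces a bounded harmonic function $h$ with finite Dirichlet energy that separates the ends (equivalently, a nonzero element of reduced $L^2$-cohomology in degree one, or the Li--Tam harmonic function associated to the non-parabolic end). The function $\nabla h$ is then a nonzero $L^2$ harmonic $1$-form. The mechanism by which $(R_p)$ fails is: $(R_p)$ together with its dual $(R_{p'})$ would force $\mathcal{L}^{-1/2}$ to map $L^{p'}$ gradients back in a way incompatible with the existence of such an $L^2\cap$(low-integrability) harmonic form whose potential grows; more precisely, following \cite{cch06}, one shows that on a manifold with two ends satisfying $(D_N)$ and $(UE)$, $(R_p)$ for $p\ge$ (the dimension of the ``large'' end) contradicts the behavior of the Green function difference across the ends. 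Here the relevant dimension is $N$ because $(D_N)$ controls the large-scale volume, and $p>N$ puts us squarely in the forbidden range.

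The key steps in order: (1) invoke Lemma~\ref{pend-ploc} to pass from $(P_2^E)$ to $(P_{2,\loc})$; (2) quote \cite{cjks16} for $(G_p)\Leftrightarrow(RH_p)$, $p\in(2,\infty)$; (3) prove or quote $(G_p)\Rightarrow(R_p)$ for $p>N$ via the localized Calder\'on--Zygmund argument, and recall the trivial $(R_p)\Rightarrow(G_p)$; (4) observe $(RD_n)$ is superfluous since $(D)$ and connectivity already give a positive lower volume exponent; (5) for the last assertion, assume non-parabolicity and two ends, extract a non-parabolic end and the associated Li--Tam bounded harmonic function $h$ with $\nabla h\in L^2$, and derive a contradiction with $(R_p)$, $p>N$, by adapting the $L^p$-cohomology argument of \cite{cch06,ca16}.

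The main obstacle I anticipate is step~(5): the argument in \cite{cch06} producing the obstruction to $(R_p)$ on manifolds with two ends is stated under the hypotheses of Theorem~\ref{negative} (Ricci bounded below and the Sobolev inequality $(S_{2N/(N-2),2)})$), whereas here we only have the weaker $(UE)$, $(D_N)$, $(P_2^E)$. So one must re-examine that obstruction and check it goes through with $(UE)$ in place of the global Sobolev inequality; the point is that for the ``only one end'' claim one does not need the Sobolev inequality globally but only estimates on the ends and the behavior of the Green's function at infinity, both available from $(UE)$ plus non-parabolicity. If this re-derivation is delicate, an alternative is to run the argument through the criterion of Carron \cite[Theorem C]{ca16} after verifying its hypotheses follow from ours, or to use the fact (provable from $(UE)$ and $(D_N)$) that a manifold with two ends carries a harmonic function of linear-type growth whose gradient is in $L^2$ but not in any $L^q$ with $q\le N$, which is exactly the obstruction to $(RH_p)$, hence to $(R_p)$, for $p>N$.
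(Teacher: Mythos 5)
Your proposal correctly identifies several ingredients that do appear in the paper's proof — passing from $(P_2^E)$ to $(P_{2,\loc})$ via Lemma~\ref{pend-ploc}, quoting \cite{cjks16} for the equivalence of $(G_p)$ and $(RH_p)$, and falling back on Carron's Theorem~C \cite{ca16} for the one-end conclusion. However, there is a genuine gap in your treatment of the equivalence with $(R_p)$. You write that $(G_p)\Rightarrow(R_p)$ for $p>N$ should follow from ``the Calder\'on--Zygmund decomposition argument of \cite{acdh} using the \emph{local} Poincar\'e inequality only,'' with the claim that the bad terms are ``controlled at unit scale.'' This does not clearly work: the good-$\lambda$ argument of \cite{acdh} must handle balls of arbitrary radius, and the comparison of $g-g_{4B}$ against gradient averages requires a Poincar\'e inequality at the scale of $B$, not at unit scale. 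The paper's actual route is to first prove Theorem~\ref{n-poin}, a nontrivial chaining argument showing that $(D_N)$ plus $(P_2^E)$ imply the \emph{global} scale-invariant Poincar\'e inequality $(P_p)$ for every $p>N\vee 2$; only then can one invoke \cite[Theorem~1.9]{cjks16}, which requires such a global $(P_p)$ and delivers the full three-way equivalence of $(R_p)$, $(RH_p)$, $(G_p)$ in one shot. Your instinct that ``$p>N$ makes things easier'' is right in spirit, but the place it enters is precisely in making the chaining in Theorem~\ref{n-poin} converge (the term $2^{j(1-N/q)}$ has a summable geometric factor exactly because $q>N$), not in localizing the $\acdh$ decomposition.

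For the one-end conclusion you note the correct reference \cite[Theorem~C]{ca16}, but you omit a hypothesis the paper supplies explicitly: Carron's theorem requires $M$ to be $p$-parabolic, and this follows from $p>N$ together with $(D_N)$ (polynomial volume growth of order $N$ forces $p$-parabolicity once $p>N$). The paper also uses that $(R_{p'})$ with $p'=p/(p-1)\in(1,2)$ comes for free from Coulhon--Duong \cite{cd99} under $(D)$ and $(UE)$; you do allude to the dual exponent but do not make explicit how $(R_{p'})$ is secured. The speculative $L^p$-cohomology / Li--Tam argument you sketch as an alternative is not needed and would require substantially more work than the clean citation to \cite{ca16}.
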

Note that we did not assume $(P_2)$ above, however $(P_2)$ follows as a consequence of the proof; see Remark \ref{remark-p2}.

For the proof we will show that the validity
$(P^E_2)$ guarantees scale-invariant Poincar\'e inequalities $(P_p)$ for any $p>N\vee2$  (see Theorem \ref{n-poin}).  The validity of these Poincar\'e inequalities
allows us to use \cite[Theorem 1.9]{cjks16}, and then \cite[Theorem C]{ca16} to conclude the theorem.

We have the following unboundedness of the Riesz transform as an application of the above result.
\begin{cor}\label{main-p2-negative}
Assume that $(D_{N})$ holds on $M$  with $0<N<\infty$, and that $(UE)$ and $(P^E_2)$ hold.  If there exists a non-constant harmonic function $u$ on $M$ with the growth
$$u(x)=\mathcal{O}(d(x,o)^\alpha)\ \mbox{as}\, d(x,o)\to\infty$$
for some $\alpha\in [0,1)$ and a fixed $o\in M$,
then $(R_p)$ does not hold for any  $p>N\vee 2$ satisfying $p(1-\alpha)\ge N$.
\end{cor}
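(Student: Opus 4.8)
The plan is to argue by contraposition: assuming $(R_p)$ holds for some $p > N \vee 2$ with $p(1-\alpha) \ge N$, I will derive that every harmonic function on $M$ with growth $\mathcal{O}(d(x,o)^\alpha)$, $\alpha \in [0,1)$, must be constant, contradicting the existence of the non-constant $u$. The bridge is Theorem~\ref{main-one}: under $(D_N)$, $(UE)$ and $(P^E_2)$, the hypothesis $(R_p)$ for such $p$ is equivalent to $(RH_p)$, the reverse $L^p$-H\"older inequality for gradients of harmonic functions on all balls. So first I would invoke Theorem~\ref{main-one} to replace $(R_p)$ by $(RH_p)$.

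Next I would exploit $(RH_p)$ together with the growth bound on $u$ on a sequence of large balls $B = B(o, R)$ with $R \to \infty$. Applying $(RH_p)$ on such a ball gives
$$
\left(\fint_{B(o,R)} |\nabla u|^p \, d\mu\right)^{1/p} \le \frac{C}{R} \fint_{B(o,2R)} |u| \, d\mu \le \frac{C'}{R} \cdot R^\alpha = C' R^{\alpha - 1},
$$
using $u(x) = \mathcal{O}(d(x,o)^\alpha)$. Hence $\int_{B(o,R)} |\nabla u|^p \, d\mu \le C'^p R^{p(\alpha-1)} V(o, R)$. Now $(D_N)$ yields $V(o,R) \lesssim R^N V(o,1)$ for $R > 1$, so the right-hand side is $\lesssim R^{p(\alpha-1) + N}$, and the hypothesis $p(1-\alpha) \ge N$ makes the exponent $p(\alpha - 1) + N \le 0$. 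Therefore $\int_{B(o,R)} |\nabla u|^p \, d\mu$ is bounded uniformly in $R$, so $\nabla u \in L^p(M)$, and in the borderline case $p(1-\alpha) = N$ I would still need a sharpening (e.g. pass to a dyadic decomposition and use reverse doubling, or a slightly smaller exponent) to actually force $\|\nabla u\|_p \to$ small, but at minimum we obtain $|\nabla u| \in L^p(M)$.

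Finally, I would use a Liouville-type argument to conclude $\nabla u \equiv 0$. The cleanest route: since $p > N \vee 2 \ge 2$ and $|\nabla u| \in L^p(M)$ with, in the strict case, the local $L^p$ averages of $|\nabla u|$ on $B(o,R)$ tending to $0$, one shows $u$ is constant --- e.g. by testing the equation $\mathcal{L}u = 0$ against $\eta^2 u$ for a Lipschitz cutoff $\eta$ supported in $B(o, 2R)$, equal to $1$ on $B(o,R)$, with $|\nabla \eta| \le C/R$, which gives $\int_{B(o,R)} |\nabla u|^2 \lesssim \frac{1}{R^2}\int_{B(o,2R)} u^2 \lesssim R^{2\alpha - 2} V(o,2R) \to 0$ when $2(1-\alpha) > N$; for the full range $p(1-\alpha) \ge N$ with $p$ possibly much larger than $N$ one instead combines $|\nabla u| \in L^p$ globally with the $p$-parabolicity/non-parabolicity dichotomy or with a Caccioppoli-plus-H\"older argument on annuli to kill the gradient. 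Either way $\nabla u \equiv 0$, so $u$ is locally constant, hence constant since $M$ is connected, the desired contradiction.

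The main obstacle I anticipate is the endpoint balancing in the exponents: the hypothesis is the non-strict inequality $p(1-\alpha) \ge N$, whereas the naive estimate above only gives boundedness (not decay) of $\int_{B(o,R)} |\nabla u|^p$ when equality holds, and a crude Caccioppoli gives the wrong power ($2$ in place of $p$). Resolving this requires either upgrading the $L^p$ gradient control to a genuine vanishing statement --- presumably by iterating $(RH_p)$ on a telescoping family of annuli and summing, where the geometric series converges precisely because $p(1-\alpha) \ge N$ --- or appealing to a sharp Liouville theorem for harmonic functions with globally $L^p$ gradient on doubling spaces with upper dimension $N < p$. I would first check whether such a Liouville statement is already available (in $(P_2)$ form, which Remark~\ref{remark-p2} says holds here) and cite it; otherwise the annular-summation estimate is the technical heart of the argument.
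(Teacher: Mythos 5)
You have correctly identified the right starting point: invoke Theorem~\ref{main-one} to trade $(R_p)$ for $(RH_p)$, then apply $(RH_p)$ on balls $B(o,R)$ with $R\to\infty$ and use $(D_N)$ and the growth bound to obtain $\int_{B(o,R)}|\nabla u|^p\,d\mu\lesssim R^{p(\alpha-1)+N}$. When $p(1-\alpha)>N$ this tends to $0$ and gives $\nabla u\equiv 0$, a contradiction; this matches the paper's treatment of the non-borderline cases. You have also correctly diagnosed the real obstacle: at the endpoint $p(1-\alpha)=N$, i.e.\ $p=N/(1-\alpha)$, the exponent vanishes and the estimate only yields $|\nabla u|\in L^p(M)$ with no decay, which by itself does not force $u$ to be constant.

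The issue is that neither of your candidate remedies actually closes this gap. The ``telescoping annuli'' idea fails at equality precisely because the relevant geometric series has ratio $2^{p(\alpha-1)+N}=1$ and does not converge. The appeal to a ``sharp Liouville theorem for harmonic functions with globally $L^p$ gradient'' is unspecified; the paper does not claim such a theorem in this generality, and even with $(P_2)$ available (via Remark~\ref{remark-p2}) you would need to produce a citation or a proof. Your Caccioppoli computation with exponent $2$ only covers the much narrower range $2(1-\alpha)>N$.

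The paper's actual endpoint trick is different and this is where you are missing the key lemma: since $p>N\vee 2$, Theorem~\ref{n-poin} provides the Poincar\'e inequality $(P_p)$; combining $(P_p)$ with $(R_p)$ (hence $(RH_p)$) and invoking the Gehring-type self-improvement result \cite[Corollary 1.10]{cjks16} upgrades $(RH_p)$ to $(RH_{p+\epsilon})$ for some $\epsilon>0$. One then repeats the large-ball estimate with the improved exponent $p+\epsilon$: since $N/(p+\epsilon)-1+\alpha<N/p-1+\alpha=0$ strictly, the right side tends to $0$ and forces $\|\nabla u\|_{p+\epsilon}=0$, the contradiction. Unboundedness for all larger $p$ then follows, as you anticipated, by interpolation with $(R_2)$. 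So your proposal is on track up to the endpoint but leaves the heart of the borderline case unfinished; the missing ingredient is the self-improving reverse H\"older inequality from \cite{cjks16} used in tandem with $(P_p)$.
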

Using the Poincar\'e inequality $(P_p)$ for any $p>N\vee 2$ established in Theorem \ref{n-poin}
together with  Theorem \ref{main-one} allows us to conclude the claim via arguing by contradiction.
We refer the reader to \cite{ca16,lt92} for more on existence and non-existence of non-constant harmonic functions of
sublinear growth.

\subsection{Applications and comments}
\hskip\parindent As applications of our main results, in this part, we address the questions of stability of boundedness
of the Riesz transform under gluing operations and make some final comments on our result.

 The following question was asked in \cite{cch06}.
\begin{quest}[Part of Open Problem 8.2 \cite{cch06}]\label{question-gluing}
Under which conditions is boundedness of the Riesz transform on $L^p$ stable under the
gluing operation on manifolds?
\end{quest}
We refer the reader to \cite[Section 3]{gri-sal16} and also \cite{gri-sal09} for a detailed description of the gluing operation. Here we only need to know that the gluing operation is smooth, and only changes structure and metric in a compact set.
As shown by Theorem \ref{negative}, \cite[Theorem C]{ca16} and Theorem \ref{main-one}, the $L^p$-boundedness of the Riesz transform
is not stable under the gluing operations if $p$ is not less than the dimension $N$ and bigger than two.
Previously, Carron \cite{ca07} and Devyver \cite{de15} had addressed this question under the requirement of lower Ricci curvature bound and Sobolev inequalities;
see also \cite{ca16} for a description of Devyver's result.

Our Theorem \ref{main} provides a solution to the above question in a different manner than \cite{ca07,de15}.
\begin{thm}\label{stability-gluing-main}
Let $k\ge 2$. Suppose that for each $1\le i\le k$, $M_i$ is a complete non-compact manifold where $(D)$, $(UE)$ and $(P_2^E)$ hold.
Assume that the gluing manifold $M:=M_1\#\cdots \#M_k$  satisfies $(D_{N})$ and $(RD_n)$ for some $2<n\le N<\infty$.
Then if for some $p\in (2,n)$, $(R_p)$ holds on each $M_i$, $(R_p)$ holds on $M$.
\end{thm}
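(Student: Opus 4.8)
The plan is to deduce Theorem~\ref{stability-gluing-main} from Theorem~\ref{main}. Since Theorem~\ref{main} characterizes $(R_p)$ (for $p\in(2,n)$) on $M$ in terms of the single condition $(RH_p^E)$ — the reverse $L^p$-H\"older inequality for gradients of harmonic functions on balls $B$ with $3B\cap M_0=\emptyset$ — it suffices to verify that $(RH_p^E)$ holds on the glued manifold $M=M_1\#\cdots\#M_k$, using that $(R_p)$ holds on each $M_i$. The whole point is that $(RH_p^E)$ is, by construction, a condition only about balls that sit \emph{inside the ends}, i.e., away from the compact gluing region $M_0$, and the gluing operation only alters structure and metric in a compact set.

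\textbf{Step 1: Reduce to verifying $(RH_p^E)$ on $M$.} Because $M$ satisfies $(D_N)$ and $(RD_n)$ with $2<n\le N<\infty$ by hypothesis, and $(UE)$ and $(P_2^E)$ are preserved under gluing (this uses the stability statements referred to in the introduction: $(UE)$ follows from \cite{gri-sal16} since each end extends isometrically to a manifold satisfying $(UE)$; $(P_2^E)$ is a condition only on balls away from $M_0$, hence inherited from the $M_i$), Theorem~\ref{main} applies to $M$. So $(R_p)$ on $M$ is equivalent to $(RH_p^E)$ on $M$, and it remains to establish the latter.

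\textbf{Step 2: Transfer $(RH_p^E)$ from the pieces.} Let $B=B(x,r)$ be a ball in $M$ with $3B\cap M_0=\emptyset$. Then $3B$ lies entirely in one end $E_i$ of $M$; since the gluing operation changes nothing outside $M_0$, there is an isometry of a neighborhood of $\overline{3B}$ onto a corresponding open subset of $M_i$, away from the compact part of $M_i$. A harmonic function $u$ on $3B\subset M$ is then carried to a harmonic function on the isometric copy of $3B$ inside $M_i$, with $|\nabla u|$, the measure, and the radius all preserved. Applying Theorem~\ref{main} on $M_i$ — valid because $M_i$ satisfies $(D)$, $(UE)$, $(P_2^E)$, and one checks $M_i$ inherits $(RD_{n_i})$ for some $n_i>2$ with $p<n_i$ (here one must be careful: the reverse doubling exponent of $M_i$ need not equal $n$; the cleanest route is to note that $(R_p)$ on $M_i$ \emph{directly} implies $(RH_p)$ on $M_i$ via the equivalence $(i)\Leftrightarrow(iii)$ of Theorem~\ref{main-n}, or simply $(RH_p^E)$ on $M_i$, without re-deriving it) — gives the desired bound
$$\left(\fint_B|\nabla u|^p\,d\mu\right)^{1/p}\le\frac{C_i}{r_B}\fint_{2B}|u|\,d\mu,$$
with $C_i$ independent of $B$. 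Taking $C=\max_{1\le i\le k}C_i$ yields $(RH_p^E)$ on $M$.

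\textbf{Step 3: Conclude.} By Theorem~\ref{main} applied to $M$, $(RH_p^E)$ on $M$ implies $(R_p)$ on $M$, which is the assertion.

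\textbf{Main obstacle.} The delicate point is Step~2, precisely the bookkeeping around which ``$(RH)$-type'' statement on the individual $M_i$ is actually available and how it matches the balls appearing in $(RH_p^E)$ on $M$. One needs $(R_p)$ on $M_i$ to be equivalent to a reverse-H\"older statement valid on \emph{all} balls $B\subset M_i$ with $3B$ away from the compact core of $M_i$ — and this requires knowing $p$ lies below the relevant lower dimension of $M_i$. If $M_i$ has lower dimension $n_i\le p$, Theorem~\ref{main-n}/Theorem~\ref{main} does not literally apply to $M_i$, and one must argue that $(R_p)$ on $M_i$ still forces the local reverse H\"older estimate on balls in the ends — e.g.\ because $(G_p)$ on $M_i$ follows from $(R_p)$ by the analyticity argument $(G_p)$ recalled in the introduction, and then the implication $(G_p)\Rightarrow(RH_p)$ from \cite{cjks16} (which, as Remark after Theorem~\ref{main-n} notes, needs only a local Poincar\'e inequality $(P_{2,\loc})$, itself implied by $(P_2^E)$ via Lemma~\ref{pend-ploc}, and no reverse doubling) is available without any dimension restriction. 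Routing the argument through $(G_p)$ and the \cite{cjks16} equivalence on each $M_i$, rather than through Theorem~\ref{main} on $M_i$, is the safe way to dispatch this obstacle.
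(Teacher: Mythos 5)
Your proposal is correct and follows the same overall route as the paper: argue that $(UE)$ and $(P_2^E)$ hold on $M$ (via Theorem~\ref{upper-heat} and the observation that $(P_2^E)$ only sees balls away from $M_0$), transfer $(RH)$-type estimates from the pieces to obtain $(RH_p^E)$ on $M$, and conclude by Theorem~\ref{main}. Your Step~2 refinement is, in fact, a correction to the paper's own write-up: the paper passes from $(R_p)$ to $(RH_p)$ on each $M_i$ by citing Theorem~\ref{main-n} wholesale, but Theorem~\ref{main-n} requires $(RD_{n_i})$ on $M_i$ with $p<n_i$, and the hypotheses of Theorem~\ref{stability-gluing-main} do not guarantee this for every piece --- one of the $M_i$ may have lower dimension at most $p$ even though the glued $M$ has lower dimension $n>p$ (think of gluing ends of unequal volume growth). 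Your suggested route, $(R_p)\Rightarrow(G_p)$ by semigroup analyticity and then $(G_p)\Rightarrow(RH_p)$ by the \cite{cjks16} equivalence (which, as the remark after Theorem~\ref{main-n} notes, needs only $(D)$, $(UE)$ and $(P_{2,\loc})$, the last following from $(P_2^E)$ via Lemma~\ref{pend-ploc}), is precisely the chain of implications that the relevant portion of the proof of Theorem~\ref{main-n} itself uses, and it is available on every $M_i$ under the stated hypotheses --- no reverse doubling enters. So you have not merely reproduced the paper's argument but also patched a genuine (if minor and easily fixable) gap in its citation.
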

It is worth to note that, our assumptions, $(UE)$ and $(P_2^E)$, are stable under gluing operations.
Indeed, under gluing operations, it is straight to see that $(P_2^E)$ is stable, on the other hand, the stability of $(UE)$ follows from \cite{gri-sal09,gri-sal16} (see Theorem \ref{upper-heat}).

Since $(P_2)$ implies $(UE)$ and $(P_2^E)$, we obtain the following corollary.
\begin{cor}\label{stability-gluing}
Let $k\ge 2$. Suppose that for each $1\le i\le k$, $M_i$ is a complete non-compact manifold where $(D)$ and $(P_2)$ hold.
Assume that the gluing manifold $M:=M_1\#\cdots \#M_k$  satisfies $(D_{N})$ and $(RD_n)$ for some $2<n\le N<\infty$.

(i) There exists $\epsilon>0$ such that $2+\epsilon<n$ and $(R_{2+\epsilon})$ holds.

(ii) If for some $p\in (2,n)$, $(R_p)$ holds on each $M_i$, then $(R_p)$ holds on $M$.
\end{cor}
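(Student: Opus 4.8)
The plan is to deduce Corollary \ref{stability-gluing} directly from Theorem \ref{stability-gluing-main} and the known implications among the hypotheses, treating the gluing manifold $M=M_1\#\cdots\#M_k$ as an instance of the general framework. The first observation is that, on each factor $M_i$, the scale-invariant Poincar\'e inequality $(P_2)$ together with the doubling property $(D)$ is equivalent (by \cite{gri92,sal2}) to two-sided Gaussian heat kernel bounds $(LY)$; in particular $(P_2)$ on $M_i$ implies both $(UE)$ on $M_i$ and a fortiori $(P_2^E)$ on $M_i$, since the Poincar\'e inequality on all balls trivially restricts to balls whose doubles avoid the compact core. Thus each $M_i$ satisfies $(D)$, $(UE)$ and $(P_2^E)$, which are exactly the standing hypotheses of Theorem \ref{stability-gluing-main}.

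For part (ii), I would simply invoke Theorem \ref{stability-gluing-main}: given the assumed $(D_N)$ and $(RD_n)$ on $M$ with $2<n\le N<\infty$, and given that $(R_p)$ holds on each $M_i$ for some fixed $p\in(2,n)$, the theorem yields $(R_p)$ on $M$. There is essentially nothing further to prove here; the work has already been done in establishing Theorem \ref{stability-gluing-main}, and the only point to spell out is that $(P_2)$ on $M_i$ upgrades to the weaker pair $(UE)+(P_2^E)$ needed as input.

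For part (i), the strategy is to combine part (ii) with the open-ended property, Corollary \ref{cor-open}. First I would check that the baseline case $p=2$ is available on $M$: since $(R_2)$ holds on every manifold by integration by parts, $(R_2)$ holds in particular on each $M_i$, but more to the point we need $(R_{2+\epsilon})$. Here I would argue that each $M_i$, satisfying $(D)$ and $(P_2)$, satisfies the Li--Yau estimate, hence $(G_p)$ for all $p$ by \cite{bf15,cjks16}, so in particular $(R_{p})$ holds on $M_i$ for every $p\in(2,n)$; applying part (ii) with such a $p$ close to $2$ then gives $(R_p)$ on $M$ for some $p\in(2,n)$, and Corollary \ref{cor-open} (applicable because $M$ satisfies $(D_N)$, $(RD_n)$, and $(UE)$, $(P_2^E)$ as discussed) shows the set of such $p$ is relatively open, hence in particular contains some $2+\epsilon<n$. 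Alternatively, and more cleanly, one takes any $q\in(2,n)$, notes $(R_q)$ holds on each $M_i$ hence on $M$ by (ii), and then either $q$ itself is of the form $2+\epsilon$ or one applies Corollary \ref{cor-open} once; since $(2,n)$ is a nonempty open interval this already delivers a valid $\epsilon>0$.

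The main (and only real) obstacle is bookkeeping rather than mathematics: one must verify carefully that the hypotheses travel correctly, namely that $(UE)$ and $(P_2^E)$ transfer from the factors $M_i$ to the glued manifold $M$ — this is where the stability results of Grigor'yan--Saloff-Coste \cite{gri-sal09,gri-sal16} (invoked as Theorem \ref{upper-heat}) enter for $(UE)$, while the transfer of $(P_2^E)$ is immediate because gluing only alters the compact core and balls defining $(P_2^E)$ are disjoint from a neighborhood of that core. With $(D_N)$ and $(RD_n)$ assumed on $M$ by hypothesis, all the ingredients of Theorem \ref{stability-gluing-main} and Corollary \ref{cor-open} are in place, and the corollary follows.
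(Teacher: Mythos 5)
Your treatment of part (ii) is correct and is exactly the paper's argument: $(D)$ plus $(P_2)$ on each $M_i$ gives $(UE)$ and $(P_2^E)$ (indeed $(LY)$, and $(P_2^E)$ is a restriction of $(P_2)$), so Theorem \ref{stability-gluing-main} applies verbatim.

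Part (i), however, contains a genuine gap. You assert that each $M_i$, having $(D)$ and $(P_2)$, hence Li--Yau estimates, ``satisfies $(G_p)$ for all $p$ by \cite{bf15,cjks16}, so in particular $(R_p)$ holds on $M_i$ for every $p\in(2,n)$.'' This is false. What \cite{bf15,cjks16} give under $(D)$ and $(P_2)$ is the \emph{equivalence} $(G_p)\Longleftrightarrow(R_p)$ for each $p\in(2,\infty)$; neither side comes for free. Two-sided Gaussian bounds do not imply $(G_p)$ or $(R_p)$ for any given $p>2$ (for instance, uniformly elliptic divergence-form operators on $\rn$, which fall under the Dirichlet framework of Section \ref{extension-mms}, satisfy $(D)$ and $(P_2)$ yet the Riesz transform can fail on $L^p$ for large $p$). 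Your ``alternative'' route suffers from the same defect, since it again starts from ``$(R_q)$ holds on each $M_i$'' for an arbitrary $q\in(2,n)$. The correct input, which the paper uses, is the open-ended/self-improvement theorem of Auscher and Coulhon \cite[Theorem 0.4]{ac05}: under $(D)$ and $(P_2)$ on $M_i$ there exists \emph{some} $\epsilon_i>0$ with $(R_{2+\epsilon_i})$ on $M_i$. Taking $\epsilon>0$ below all $\epsilon_i$ and small enough that $2+\epsilon<n$, one has $(R_{2+\epsilon})$ on every $M_i$, after which Theorem \ref{stability-gluing-main} (equivalently, your part (ii)) transfers it to $M$. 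No appeal to Corollary \ref{cor-open} is needed once this correction is made, though it remains valid if one wants a strictly larger exponent.
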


%In view of manifolds with several Euclidean/conic ends, or ends  isometric to simply connected
%nilpotent Lie groups at infinity, this result is rather sharp. Notice that, our result work also
% for uniformly elliptic operators on manifolds with ends; see Section \ref{extension-mms}.

%Assume
%Riesz transform on manifolds with quadratic curvature decay 5
%M1;M2 are two Riemannian manifolds that satisfy a Sobolev inequality and a lower
%bound on the Ricci curvature. If on both M1 and M2, the Riesz transform is
%bounded on Lp and if the connected sum M1#M2 is p-hyperbolic then the Riesz
%transform is Lp-bounded on M1#M2.

In \cite{cch06},  some open  questions regarding manifolds with conic ends or ends isometric to
simply connected nilpotent Lie groups at infinity  were also proposed.
These two questions were solved by Guillarmou and Hassell \cite{gh08} and Carron \cite{ca07}, respectively;
  see also Carron \cite{ca16}.
Our results also provide a new proof to the two questions; see Section  \ref{sec-ue}.

Finally, let us make some comments. Notice that our main results, Theorem \ref{main-n}, Theorem \ref{main} and Theorem \ref{main-one}, together with \cite[Theorem C]{ca16} and \cite{cd99},
give a more or less satisfactory  solution for the Riesz transform on manifolds with ends, for the two cases: (i) $1<p<\max\{2,n\}$,
 (ii) $p>N\vee 2$. Recall that the case $p=2$ is trivially true.

Note that for manifolds with ends like  Euclidean ends, conic ends, or ends at infinity isometric to Lie groups of polynomial growth or cocompact covering Riemannian manifolds with  polynomial growth deck transformation group,
it holds that $n=N$.  It turns out that on these settings,  $(R_p)$ is stable under gluing operation for $p<n=N$ by Theorem \ref{stability-gluing-main}, and not stable for $p\ge N$ by \cite[Theorem C]{ca16}. It is then somehow
not restrictive to assume $(P_2)$ for $p\ge N$ or may necessary to have $(P_2)$, under which $(R_p)$ for $p>2$
is well understood  by \cite{acdh}, see also \cite{cjks16} and Theorem \ref{main-one}.

However, for manifolds where one only has $n<N$, the case $p\in (2,\infty) \cap [n,N]$ is still unclear,
and certainly deserves further study.
\begin{quest}\label{open-future}
Let $M$ be a complete non-compact manifold, which satisfies $(D_{N})$ and $(RD_n)$ for some $0<n\le N<\infty$ and $N>2$. Suppose that $(UE)$ and $(P^E_2)$ hold.
Then is $(R_p)$ equivalent to $(G_p)$ or $(RH_p)$ for $p\in (2,\infty) \cap [n,N]$?
\end{quest}
For each  $p\in (2,\infty)$, it was known from \cite{cjks16} that $(G_p)\Longleftrightarrow (RH_p)$, and it holds automatically that
$(R_p)\Longrightarrow (G_p)$ (cf. \cite{acdh}). So the only question left is, does $(G_p)$ or $(RH_p)$ imply $(R_p)$ for $p\in (2,\infty) \cap [n,N]$?

\subsection{Structure of the paper}
\hskip\parindent
 The paper is organized as follows. In Section 2, we provide various versions of Poincar\'e inequalities for later use.
In Section 3, we study the Riesz transform for $p$ less than the lower dimension, while
in Section 4, we study the case $p$ bigger than the upper dimension. In Section 5, we provide some extensions
of the main results to non-smooth settings.
In the final section, we shall discuss the validity of $(UE)$,
and provide examples that our results can be applied to, in particular, we give the proof of Theorem \ref{stability-gluing-main} and Corollary \ref{stability-gluing}.

Throughout the work, we denote by $C,c$ positive constants which are independent of the
main parameters, but which may vary from line to line. For a ball $B$,
unless otherwisely specified, we denote its radius and center by $r_B$ and $x_B$, respectively.

\section{Poincar\'e inequality}\label{sc-sp}
\hskip\parindent In this section, we shall provide various versions of Poincar\'e inequalities for later use.
%According to Saloff-Coste \cite{sal,sal2} and
%Grigor'yan \cite{gri92} (see \cite{bcs15} for recent developments), it is well known result  that the upper Gaussian bounds  of the heat kernel,
% $(UE)$, implies a local Sobolev inequality, i.e., there exist $q>2$ and $C_S>0$ such that
%for every ball $B$ and every function $f\in C^1_c(\bar B)$ it holds
%$$\left(\fint_{B}|f|^q\,d\mu\right)^{1/q}\le
%C_{S}r\left(\fint_{B}|\nabla f|^2\,d\mu\right)^{1/2}. \leqno(LS_{q,2})$$
%Recall that $n$ is as in Convention \ref{conv-1}.
%
%\begin{lem}[Global Sobolev inequality]\label{sobolev}
%Let $M$ be a  complete manifold, which satisfies $(D_\loc)$ with $n>1$, $(AD_N)$ for some $N>2$, and $(UE)$.
%Then if $n\le N$, there exists $C>0$ such that for each $f\in C^\infty_c(M)$ it holds
%$$\|f\|_{{2^\ast}}\le  C\|\nabla f\|_2,$$
%where $2^\ast=\frac{2N}{N-2}$.
%\end{lem}
%\begin{proof}
%By using Lemma \ref{riesz-potential} one see immediately that
%$$\|f\|_{{2^\ast}}=\|\mathcal{L}^{-1/2}\mathcal{L}^{1/2}f\|_{2^\ast}\le C\|\mathcal{L}^{1/2}f\|_{2}= C\|\nabla f\|_2,$$
%where the last step follows by integrating by parts.
%\end{proof}
%\begin{rem}\label{rem-sob}\rm
%Notice that however if $n>N$, the above global Sobolev inequality may not hold; see \cite[Section 6]{var88}.
%\end{rem}

\begin{defn}[Hardy-Littlewood maximal function]
For any locally integrable function $f$ on $M$, its Hardy-Littlewood maximal function is defined as
$$\mathcal{M}f(x):=\sup_{B:\, x\in B}\fint_B|f|\,d\mu,$$
where $B$ is any ball that contains $x$.
For $p>1$, we define the $p$-Hardy-Littlewood maximal function as
$$\mathcal{M}_pf(x):=\sup_{B:\, x\in B}\left(\fint_B|f|^p\,d\mu\right)^{1/p}.$$
\end{defn}

We say that $M$ supports a local $L^2$-Poincar\'e inequality (for short, $(P_{2,\loc})$), if for all $r_0>0$ there exists
$C_P(r_0)>0$ such that, for every ball $B$ with $r_B<r_0$ and each $f\in C^1(\bar B)$,
$$
\fint_{B}|f-f_B|^2\,d\mu\le
C_P(r_0)r_B^2\fint_{B}|\nabla f|^2\,d\mu.\leqno(P_{2,\loc})$$

\begin{lem}\label{pend-ploc}
Assume that $(P^E_2)$ holds on $M$, then $(P_{2,\loc})$ holds on $M$.
\end{lem}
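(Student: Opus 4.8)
The plan is to derive $(P_{2,\loc})$ from $(P_2^E)$ by distinguishing whether a given small ball is close to the compact core $M_0$ or comfortably inside an end, and then handling the former case by a standard compactness/covering argument that uses only smoothness of the manifold.

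First I would fix $r_0>0$ and a ball $B=B(x_B,r_B)$ with $r_B<r_0$. If $2B\cap M_0=\emptyset$, then $(P_2^E)$ applies directly and gives the desired inequality with constant $C$ independent of $B$, so nothing needs to be done. The remaining case is $2B\cap M_0\ne\emptyset$, which forces $x_B$ to lie in the bounded set $B(x_M,2r_0+1)$ (recall $\mathrm{diam}(M_0)=1$), since $d(x_B,x_M)\le 2r_B+1<2r_0+1$. Thus all the "bad" balls of radius $<r_0$ are centered in a fixed relatively compact region $K:=\overline{B(x_M,2r_0+1)}$.

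For these bad balls I would invoke the classical fact that on any smooth Riemannian manifold a local $L^2$-Poincar\'e inequality holds on small balls contained in a compact set: one can cover $K$ by finitely many charts in which the metric is comparable to the Euclidean one, so that balls of sufficiently small radius $\rho_0$ (depending on $K$) inside these charts satisfy the Euclidean Poincar\'e inequality up to a fixed constant; a chaining argument over a bounded overlap of such charts then upgrades this to all balls of radius $<r_0$ meeting $M_0$, at the cost of a constant depending on $r_0$ (through $K$) but not on the individual ball. Alternatively, and more in the spirit of the paper, one can note that $(UE)$ is assumed globally and that a Gaussian upper bound together with relative compactness yields, by Grigor'yan--Saloff-Coste type local arguments (or simply by the smoothness of $p_t$ on the compact region), a local two-sided heat kernel bound on $K$, hence $(P_{2,\loc})$ restricted to $K$. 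Either route gives a constant $C_P'(r_0)$ valid for all balls with $r_B<r_0$ and $2B\cap M_0\ne\emptyset$.

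Finally I would take $C_P(r_0):=\max\{C,\,C_P'(r_0)\}$ where $C$ is the constant from $(P_2^E)$, and conclude that $(P_{2,\loc})$ holds. The main obstacle is really just making the compact-region estimate clean: one must be slightly careful that a ball of radius $<r_0$ meeting $M_0$ can still be large (radius up to $r_0$), so it is not automatically inside a single chart, and the chaining/covering step must be stated so that the resulting constant depends only on $r_0$ and the fixed geometry of $M$ near $M_0$, not on the ball; once that bookkeeping is done, the rest is routine.
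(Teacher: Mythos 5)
Your case split is the same as the paper's: far from $M_0$ use $(P_2^E)$, near $M_0$ use compactness. Where you differ is in how you treat the compact region. The paper's proof observes that the Ricci curvature is bounded below on the bounded set $\{x:\dist(x,M_0)<3r_0\}$ (automatic by smoothness and relative compactness), and then applies Buser's inequality to get a local Poincar\'e inequality with constant $C_P(r_0)$ directly on balls of radius $<r_0$ centered there. Your primary route (a) instead covers a fixed compact neighbourhood of $M_0$ by finitely many Euclidean-comparable charts and runs a chaining argument. Both are standard and both work; the paper's route is shorter because Buser applies ball-by-ball with no chaining and no explicit charts. Your alternative route (b), however, should be dropped: $(UE)$ is \emph{not} among the hypotheses of this lemma (the statement assumes only $(P_2^E)$), so you cannot ``note that $(UE)$ is assumed globally.'' That would introduce an assumption the lemma is deliberately not making. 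As long as you keep route (a) and state the chaining bookkeeping as you indicate, the argument is correct.
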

\begin{proof}
For any $r_0>0$,  the Ricci curvature  on the set $\{x\in M:\, \dist (x,M_0)<3r_0\}$ is bounded below by a constant $K(r_0)$ depending on $r_0$.  Therefore, by Buser \cite{bu82} (see also \cite{hak}), there exists $C_{P}(r_0)$ such that
 for every ball $B=B(x,r)$ with $r<r_0$ and $\dist (x,M_0)<2r_0$,   and each $f\in C^1(\bar B)$, it holds
$$
\fint_{B}|f-f_B|^2\,d\mu\le
C_P(r_0)r^2\fint_{B}|\nabla f|^2\,d\mu.\leqno(P_{2,\loc})$$
On the other hand, by  $(P^E_2)$, one sees that
 there exists $C$ such that   for any ball $B(x,r)$ with center $x\notin \{y\in M:\, \dist (y,M_0)<2r_0\}$ and $r<r_0$, it holds
  for each $f\in C^1(\bar B)$ that
$$
\fint_{B}|f-f_B|^2\,d\mu\le
Cr^2\fint_{B}|\nabla f|^2\,d\mu,$$
as desired.
\end{proof}

For a real number $\gamma>0$ we denote
by $[\log_2 \gamma]$ the biggest integer not bigger than $\log_2 \gamma$.
\begin{thm}\label{n-poin}
Assume that $(D_{N})$ holds on $M$ with $0<N<\infty$.
If  $(P^E_2)$ holds on $M$, then for any $p>N\vee 2$ there is a Poincar\'e inequality $(P_p)$, i.e., there exists $C>0$ such that for any ball $B$
and any $f\in C^1(\bar{B})$ it holds
$$\fint_{B}|f-f_B|\,d\mu\le Cr_B\left(\fint_{B}|\nabla f|^p\,d\mu\right)^{1/p}.\leqno(P_p)$$
\end{thm}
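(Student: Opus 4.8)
The plan is to reduce the global Poincaré inequality $(P_p)$ to a statement about balls that are either small (where $(P_{2,\loc})$ from Lemma \ref{pend-ploc} applies, hence $(P_2)$ locally, hence $(P_p)$ locally by Hölder) or far from $M_0$ (where $(P^E_2)$ applies directly), and to handle the remaining ``intermediate'' balls — those that are large and whose double meets the compact core $M_0$ — by a direct argument exploiting that $p>N$ makes the Sobolev-type embedding trivial. More precisely, for a ball $B=B(x_B,r_B)$ with $r_B\ge 1$ and $2B\cap M_0\neq\emptyset$, one has $r_B\gtrsim \diam(M_0)=1$ but also $B\subset B(x_M, Cr_B)$ for some absolute $C$, so up to enlarging constants we may assume $B$ is centred near $x_M$. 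On such a ball I would prove the stronger pointwise bound that for $\mu$-a.e.\ $y,z\in B$,
$$
|f(y)-f(z)|\le C r_B \lf(\fint_{B}|\nabla f|^p\,d\mu\r)^{1/p},
$$
which immediately gives $(P_p)$ (even with $\fint_B|f-f_B|$ replaced by $\sup$-type oscillation). This is the place where $p>N$ is essential: it is exactly the regime in which a function with gradient in $L^p$ on a doubling space of upper dimension $N$ is Hölder continuous with a quantitative modulus, via a standard telescoping of averages over dyadically shrinking balls combined with the local Poincaré inequality $(P_{2,\loc})$ (equivalently $(P_{2})$ on each scale $\le 1$) and the volume lower bound coming from $(D_N)$ — the series $\sum_k (2^{-k}r_B)\,(V(\cdot,2^{-k}r_B))^{-1/p}$ converges precisely because $p>N$.

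The key steps, in order, would be: (1) fix $p>N\vee 2$ and a ball $B$; split into the cases $r_B<1$, $r_B\ge 1$ with $2B\cap M_0=\emptyset$, and $r_B\ge 1$ with $2B\cap M_0\neq\emptyset$. (2) In case $r_B<1$: $(P_{2,\loc})$ holds on $B$ with a constant bounded by $C_P(1)$, and Hölder's inequality upgrades $(P_2)$ on $B$ to $(P_p)$ on $B$. (3) In case $r_B\ge1$, $2B\cap M_0=\emptyset$: apply $(P^E_2)$ on $B$ directly, then Hölder to pass to $L^p$ on the right. (4) In the remaining case, reduce to $B$ centred at (a bounded distance from) $x_M$, then run the telescoping argument: write $f(y)-f_B = \sum_{k\ge0}(f_{B_{k+1}(y)}-f_{B_k(y)})$ where $B_k(y)=B(y,2^{-k}r_B)$, bound each term by $(P_2)$ on $B_k(y)$ (legitimate once $2^{-k}r_B<1$; the finitely many initial scales where $2^{-k}r_B\ge1$ are handled by chaining $(P^E_2)$/$(P_{2,\loc})$ along a Harnack-type chain inside $B$ using $(D_N)$ and connectedness, absorbing everything into $r_B(\fint_B|\nabla f|^p)^{1/p}$ since $r_B\gtrsim1$), use $\fint_{B_k(y)}|\nabla f|\le (\fint_{B_k(y)}|\nabla f|^p)^{1/p}\le (V(y,2^{-k}r_B))^{-1/p}\|\nabla f\|_{L^p(B)}$, and sum using $(D_N)$ and $p>N$. (5) Combine the three cases.

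The main obstacle is case (4), and within it the intermediate scales $2^{-k}r_B\gtrsim1$ of the telescoping sum, where the clean local Poincaré inequality is not available and one is genuinely using only $(P^E_2)$ plus the doubling structure near the compact core: one must produce, for any two points of $B$, a controlled chain of overlapping balls each of which either lies in an end (use $(P^E_2)$) or is small (use $(P_{2,\loc})$), with a uniformly bounded number of balls and uniformly comparable radii, so that the accumulated error is $\lesssim r_B(\fint_B|\nabla f|^p)^{1/p}$. The existence of such chains with controlled overlap is where $(D_N)$, the connectedness of $M$, and the fixed finite diameter $\diam(M_0)=1$ all enter; once the chaining lemma is in place, the convergence of the series and the remaining estimates are routine.
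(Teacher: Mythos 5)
Your high-level decomposition (small balls via $(P_{2,\loc})$, large balls away from $M_0$ via $(P^E_2)$, large balls meeting $M_0$ via a pointwise oscillation estimate, with $p>N$ driving the convergence of a geometric series) matches the skeleton of the paper's argument. But the core technical construction for the critical case --- the one you flag as ``the main obstacle'' --- is not carried out, and the way you set it up would be awkward to complete.

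The specific difficulties: your telescope $B_k(y)=B(y,2^{-k}r_B)$, $k\ge 0$, bounds $|f_{B_{k+1}(y)}-f_{B_k(y)}|$ by $\fint_{B_k(y)}|f-f_{B_k(y)}|$, so it needs a Poincar\'e inequality on each $B_k(y)$. For the scales where $2^{-k}r_B\ge 1$ and $2B_k(y)\cap M_0\ne\emptyset$, neither $(P_{2,\loc})$ nor $(P^E_2)$ applies, and this is not a ``finitely many'' phenomenon: when $y$ is near $M_0$ there are roughly $\log_2 r_B$ such scales, so simply chaining each one and ``absorbing'' the result does not give a bound uniform in $r_B$ without additional structure. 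The paper avoids ever taking an average over a large ball that meets $M_0$: it picks $x_{M_0}\in M_0\cap 2B$, writes $|f(x)-f(y)|\le 2|f(x)-f(x_{M_0})|$ for a suitable one of the two points, and then builds an explicit chain of balls along a geodesic from $x_{M_0}$ to $x$ with geometrically growing radii $\sim 2^{j/4}$, carefully choosing the enlarged balls $B_j'$ so that $2B_j'\cap M_0=\emptyset$ and $(P^E_2)$ can be applied to every step after the first; the very first step (near $M_0$, of bounded size) uses $(P_{2,\loc})$. The decisive estimate is $\sum_j 2^{j(1-N/q)/4}\lesssim 2^{k_0(1-N/q)}$, dominated by its last term because $q>N$, which produces the factor $d(x,x_{M_0})\lesssim r_B$. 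So the chain runs \emph{outward from $M_0$ with growing radii}, not inward toward $y$ with shrinking radii, and the geometry of the chain (each ball inside an end) is what makes $(P^E_2)$ usable.

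Two further structural differences worth noting. First, the paper reduces to the weak inequality $(\widetilde{P_p})$ with $8B$ on the right via Haj\l asz--Koskela self-improvement, which is what licenses applying $(P^E_2)$ and $(P_{2,\loc})$ on balls that stick out of $B$. Second, the paper's pointwise claim is $|f(x)-f(y)|\le Cr_B\bigl[\mathcal M_q(|\nabla f|\chi_{8B})(x)+\mathcal M_q(|\nabla f|\chi_{8B})(y)\bigr]$ for a fixed $q\in(N\vee 2,p)$, followed by $L^p$-boundedness of $\mathcal M_q$; your Morrey-type bound $|f(y)-f(z)|\le Cr_B(\fint_B|\nabla f|^p)^{1/p}$ is formally stronger, and while such a bound is true in hindsight once $(P_p)$ is established, proving it directly would require the same chain construction the paper uses --- you cannot assume it as a shortcut.
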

\begin{proof}
Since $(M,d)$ is a geodesic space, by Haj\l asz-Koskela \cite[Section 9]{hak}, it suffices to prove the following weaker version, i.e., for
$f\in C^1(\overline{8B})$,
$$\fint_{B}|f-f_B|\,d\mu\le Cr_B\left(\fint_{8B}|\nabla f|^p\,d\mu\right)^{1/p}.\leqno(\widetilde{P_p})$$

By Lemma \ref{pend-ploc}, a local Poincar\'e inequality $(P_{2,\loc})$ holds.
If $r_B\le 100$, then the required estimate $(\widetilde{P_p})$ follows from $(P_{2,\loc})$.

Assume now $r_B>100.$
If $2B\cap M_0=\emptyset$, then $(P_p)$ and hence $(\widetilde{P_p})$ follows from $(P^E_2)$.

Suppose $2B\cap M_0\neq \emptyset$. Let $f\in C^1(\overline{8B})$ and write
$$\fint_{B}|f-f_B|\,d\mu\le  \fint_{B}\fint_{B} |f(x)-f(y)|\,d\mu(x)\,d\mu(y). $$
{\bf Claim}: For each $q\in (N\vee 2,\infty)$, there is a constant $C>0$ such that for all $x,y\in B$  it holds
$$|f(x)-f(y)|\le Cr_B \left[\mathcal{M}_q\left(|\nabla f|\chi_{8B}\right)(x)+\mathcal{M}_q\left(|\nabla f|\chi_{8B}\right)(y)
\right].$$
If the claim holds,  then by taking $q\in (N\vee 2,p)$, we conclude that
\begin{eqnarray*}
\fint_{B}|f-f_B|\,d\mu&&\le Cr_B\fint_{B}\fint_{B}\left[\mathcal{M}_q\left(|\nabla f|\chi_{8B}\right)(x)+\mathcal{M}_q\left(|\nabla f|\chi_{8B}\right)(y)
\right]\,d\mu(x)\,d\mu(y)\\
&&\le Cr_B\left(\fint_{B}\left[\mathcal{M}_q\left(|\nabla f|\chi_{8B}\right)(x)\right]^p\,d\mu(x)\right)^{1/p} \\
&&\le Cr_B\left(\fint_{8B}|\nabla f|^p\,d\mu\right)^{1/p},
\end{eqnarray*}
where the last inequality follows from the fact that $\mathcal{M}_q$ is $L^p$-bounded for $p>q$.
The above estimate completes the proof of $(\widetilde{P_p})$ and therefore the theorem.

Let us prove the claim.  Take $x_{M_0}\in M_0\cap 2B$ and set $B_{x_{M_0}}=B(x_{M_0},1)$.
 Note that $B(x_{M_0},1)\subset 3B$ since $r_B>100$. Recall that we assume $\mathrm{diam}(M_0)=1$.
For all $x,y\in B$, we write
\begin{equation}\label{2.1x}
|f(x)-f(y)|\le |f(x)-f_{B_{x_{M_0}}}|+|f(y)-f_{B_{x_{M_0}}}|.
\end{equation}

{\bf Step 1. } Suppose first that $d(x,x_{M_0})\le 100$. We choose a sequence of balls $\{B_{j}\}_{j=0}^\infty$ such that
$B_j=B(x,2^{-j}*102)$ for each $j\ge 0$. As $x\in B$ and $r_B>100$,
we have $B_j\subset 3B\subset 8B$. We write
\begin{eqnarray}\label{2.1}
\left|f(x)-f_{B_{x_{M_0}}}\right| &&\le \left|f(x)-f_{B_0}\right|+|f_{B_{0}}-f_{B_{x_{M_0}}}|.
\end{eqnarray}

For the first term, note that $B_{j+1}\subset B_j$ for each $j\ge 0$. By using $(P_{2,\loc})$, $q>N\vee 2$ and the H\"older inequality, we conclude that
\begin{eqnarray}\label{2.2}
\left|f(x)-f_{B_0}\right|&&=\lim_{j\to \infty} \left|f_{B_j}-f_{B_0}\right|\le \sum_{j=0}^{\infty}\left|f_{B_j}-f_{B_{j+1}}\right|
\le \sum_{j=0}^{\infty} \fint_{B_{j+1}}\left|f-f_{B_{j}}\right|\,d\mu\nonumber\\
&&\le C\sum_{j=0}^{\infty} \fint_{B_{j}}\left|f-f_{B_{j}}\right|\,d\mu\le \sum_{j=0}^{\infty} C2^{-j}*102\left(\fint_{B_{j}}\left|\nabla f\right|^q\,d\mu\right)^{1/q}\nonumber\\
&&\le \sum_{j=0}^{\infty} C2^{-j} \mathcal{M}_q\left(|\nabla f|\chi_{8B}\right)(x)\nonumber\\
&&\le Cr_B\mathcal{M}_q\left(|\nabla f|\chi_{8B}\right)(x),
\end{eqnarray}
where in the last step we used the fact $r_B>100$.

For the remaining term in \eqref{2.1}, note that $B_{x_{M_0}}=B(x_{M_0},1)\subset B(x,102)=B_0\subset 3B\subset 8B$ since $d(x,x_{M_0})\le 100$.
From this and using $(D_N)$, $(P_{2,\loc})$, $q>N\vee 2$ and the H\"older inequality, we conclude that
\begin{eqnarray}\label{2.3}
\left|f_{B_{0}}-f_{B_{x_{M_0}}}\right|&&\le \fint_{B_{x_{M_0}}}\left|f-f_{B_{ 0}}\right|\,d\mu \le C\fint_{B_{0}}\left|f-f_{B_{0}}\right|\,d\mu \nonumber\\
&&\le C\left(\fint_{B_{0}}\left|\nabla f\right|^q\,d\mu\right)^{1/q}\le C\mathcal{M}_q\left(|\nabla f|\chi_{8B}\right)(x)\nonumber\\
&&\le Cr_B \mathcal{M}_q\left(|\nabla f|\chi_{8B}\right)(x),
\end{eqnarray}
since $r_B>100$. The estimates \eqref{2.2} and \eqref{2.3} yield that for $x\in B$ with $d(x,x_{M_0})\le 100$,
\begin{equation}\label{2.4}
|f(x)-f_{B_{x_{M_0}}}|\le Cr_B\mathcal{M}_q\left(|\nabla f|\chi_{8B}\right)(x).
\end{equation}

{\bf Step 2. }  Suppose $d(x,x_{M_0})> 100$ and let $k_0\in\cn$ be such that
\begin{equation}\label{2.5}
9\left(\frac 98\right)^{k_0}<d(x,x_{M_0})+8\le 9\left(\frac 98\right)^{k_0+1}.
\end{equation}
Note that \eqref{2.5} together with $d(x,x_{M_0})> 100$ implies
\begin{equation}\label{2.6}
8\left(\frac 98\right)^{k_0}< d(x,x_{M_0})<9\left(\frac 98\right)^{k_0+1}.
\end{equation}
Take a geodesic $\gamma$ connecting $x$ to $x_{M_0}$.
 \begin{figure}[ht]
\centerline{ \epsfig{file=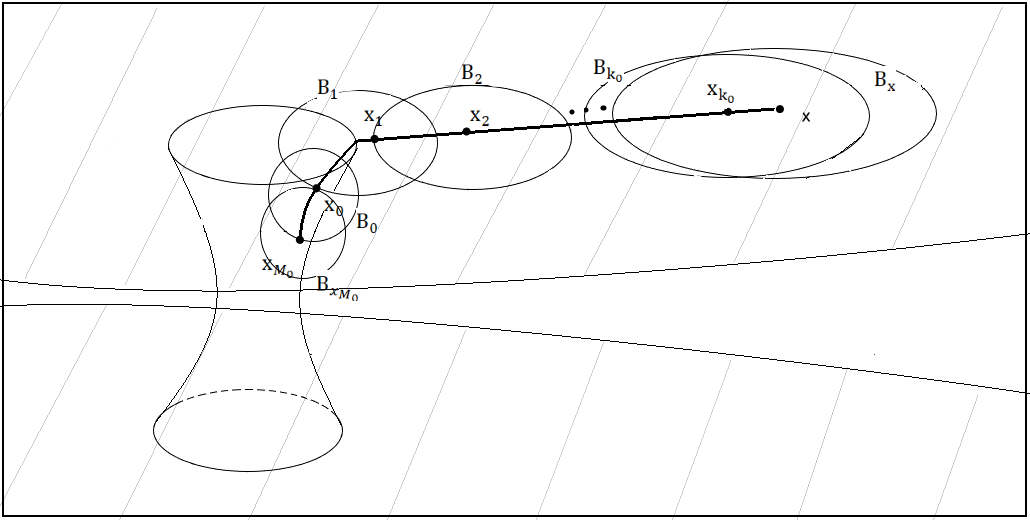, scale=0.4}
              }
             \caption{The chosen points and balls along the geodesic.}
\end{figure}
On the geodesic, we choose a sequence of points $\{x_{j}\}_{j=0}^{k_0}$ such that
$d(x_0,x_{M_0})=1$, $d(x_j,x_{j+1})=(9/8)^{j+1}$ for $0\le j\le k_0-1$.  As the set  $\{x_{j}\}_{j=1}^{k_0}$ belongs to the geodesic $\gamma$,
we have
$$d(x,x_{k_0})=d(x,x_{M_0})-\sum_{j=0}^{k_0}\left(\frac 98\right)^{j}=d(x,x_{M_0})-8\left[\left(\frac 98\right)^{k_0+1}-1\right],$$
which together with \eqref{2.5} yields that
\begin{equation}\label{2.7}
0<d(x,x_{k_0})\le \left(\frac 98\right)^{k_0+1}.
\end{equation}

 Let $B_j=B(x_j, (9/8)^{j} )$ for $0\le j\le k_0$ and $B_x=B(x,(9/8)^{k_0+1})$.  We write
\begin{eqnarray*}
\left|f(x)-f_{B_{x_{M_0}}}\right|&&\le \left|f_{B_{x_{M_0}}}-f_{B_0}\right|+\sum_{j=0}^{k_0-1}\left|f_{B_j}-f_{B_{j+1}}\right|+\left|f_{B_{x}}-f_{B_{k_0}}\right|+\left|f(x)-f_{B_x}\right|\\
&&=:I+II+III+IV.
 \end{eqnarray*}

For the term $I$, by $(P_{2,\loc})$, $(D_N)$  and the H\"older inequality, we obtain
\begin{eqnarray*}
I&&\le \left|f_{B_{x_{M_0}}}-f_{B(x_{M_0},2)}\right|+\left|f_{B_0}-f_{B(x_{M_0},2)}\right|\\
&&\le \fint_{B_{x_{M_0}}}|f-f_{B(x_{M_0},2)}|\,d\mu+\fint_{B_{0}}|f-f_{B(x_{M_0},2)}|\,d\mu\\
&&\le \frac{C}{V(x_{M_0},2)}\int_{B(x_{M_0},2)}|f-f_{B(x_{M_0},2)}|\,d\mu\\
&&\le \frac{C}{V(x_{M_0},2)^{1/2}}\left(\int_{B(x_{M_0},2)}|\nabla f|^2\,d\mu\right)^{1/2}\le \frac{C}{V(x_{M_0},2)^{1/q}}\left(\int_{B(x_{M_0},2)}|\nabla f|^q\,d\mu\right)^{1/q}\\
&&\le \frac{Cd(x,x_{M_0})^{N/q}}{V(x_{M_0},2d(x,x_{M_0}))^{1/q}} \left(\int_{B(x,d(x,x_{M_0})+2)}|\nabla f|^q\,d\mu\right)^{1/q}\\
&&\le \frac{Cd(x,x_{M_0})^{N/q}}{V(x,2d(x,x_{M_0}))^{1/q}} \left(\int_{B(x,2d(x,x_{M_0}))}|\nabla f|^q\,d\mu\right)^{1/q}\\
&&\le Cd(x,x_{M_0})^{N/q} \mathcal{M}_q\left(|\nabla f|\chi_{8B}\right)(x)
\le  Cr_B \mathcal{M}_q\left(|\nabla f|\chi_{8B}\right)(x).
\end{eqnarray*}
Above, in the last second inequality we used that $d(x,x_{M_0})<3r_B$, $B(x, 2d(x,x_{M_0}))\subset 8B$ since  $x\in B$ and $x_{M_0}\in 2B$, and in the last inequality we used that $100<d(x,x_{M_0})<3r_B.$

Let us estimate the second term $II$. For each $0\le j<k_0$,
we have $B_j,B_{j+1}\subset B(x_{j+1}, \frac{17}{8}(\frac 98)^{j})$ and $B(x_{j+1}, \frac{17}{8}(\frac 98)^{j})\subset B(x,2d(x,x_{M_0}))$ by \eqref{2.6}. By the doubling property, we have
\begin{eqnarray*}
\left|f_{B_j}-f_{B_{j+1}}\right| &&\le \left|f_{B_j}-f_{B(x_{j+1}, \frac{17}{8}(\frac 98)^{j}) }\right|+\left|f_{B_{j+1}}-f_{B(x_{j+1}, \frac{17}{8}(\frac 98)^{j})}\right|\nonumber\\
&&\le \fint_{B_j}|f-f_{B(x_{j+1},\frac{17}{8}(\frac 98)^{j})}|\,d\mu+\fint_{B_{j+1}}|f-f_{B(x_{j+1},\frac{17}{8}(\frac 98)^{j})}|\,d\mu\nonumber\\
&&\le \frac{C}{V(x_{j+1}, \frac{17}{8}(\frac 98)^{j})}\int_{B(x_{j+1},\frac{17}{8}(\frac 98)^{j})}|f-f_{B(x_{j+1},\frac{17}{8}(\frac 98)^{j})}|\,d\mu.
\end{eqnarray*}

For the $j$'s such that
$$\left(\frac 98\right)^{j}+\left(\frac 98\right)^{j+1}=\frac{17}{8}\left(\frac 98\right)^{j}\le 100,$$
by using $(P_{2,\loc})$, $(D_N)$, \eqref{2.6} and the H\"older inequality, we conclude that
\begin{eqnarray}\label{2.8}
\left|f_{B_j}-f_{B_{j+1}}\right|
&&\le \frac{C\frac{17}{8}(\frac 98)^{j}}{V(x_{j+1}, \frac{17}{8}(\frac 98)^{j})^{1/q}}\left(\int_{B(x_{j+1}, \frac{17}{8}(\frac 98)^{j})}|\nabla f|^q\,d\mu\right)^{1/q}\nonumber\\
&&\le \frac{C(\frac 98)^{j+(k_0-j)N/q} }{V(x_{j+1}, (\frac 98)^{k_0})^{1/q}} \left(\int_{B(x,2d(x,x_{M_0}))}|\nabla f|^q\,d\mu\right)^{1/q}\nonumber\\
&&\le \frac{C(\frac 98)^{j(1-N/q)}d(x,x_{M_0})^{N/q}} {V(x,  2d(x,x_{M_0}))^{1/q}} \left(\int_{B(x,2d(x,x_{M_0}))}|\nabla f|^q\,d\mu\right)^{1/q},
\end{eqnarray}
where in the last inequality we used that $V(x_{j+1}, (\frac 98)^{k_0})\sim V(x_{j+1}, 2d(x,x_{M_0}))\sim V(x, 2d(x,x_{M_0}))$
which follows from \eqref{2.6}.

For the $j$'s such that
$$\left(\frac 98\right)^{j}+\left(\frac 98\right)^{j+1}=\frac{17}{8}\left(\frac 98\right)^{j}>100,$$
notice that
$$d(x_{j+1},x_{M_0})=\sum_{j=0}^{j+1}\left(\frac 98\right)^{k} =\frac{\left(\frac 98\right)^{j+2}-1}{\frac{9}{8}-1}=8\left(\frac 98\right)^{j+2}-8,$$
which together with $\mathrm{diam}(M_0)=1$ implies
$$\dist(x_{j+1},M_0)-2\left[\left(\frac 98\right)^{j}+\left(\frac 98\right)^{j+1}\right]\ge d(x_{j+1},x_{M_0})-1-\frac{17}{4}\left(\frac 98\right)^{j}\ge \frac{47}{8}\left(\frac 98\right)^{j}-9>\frac{47}{8} \frac{800}{17}-9>100. $$
Therefore, $2B(x_{j+1},\frac{17}{8}\left(\frac 98\right)^{j} )\cap M_0=\emptyset$.
Applying $(P^E_2)$, $(D_N)$, \eqref{2.6} and the H\"older inequality, we conclude that for such $j$'s
\begin{eqnarray}\label{2.9}
\left|f_{B_j}-f_{B_{j+1}}\right| &&\le \frac{C\frac{17}{8}(\frac 98)^{j}}{V(x_{j+1}, \frac{17}{8}(\frac 98)^{j})^{1/q}}\left(\int_{B(x_{j+1}, \frac{17}{8}(\frac 98)^{j})}|\nabla f|^q\,d\mu\right)^{1/q}\nonumber\\
&&\le \frac{C(\frac 98)^{j+(k_0-j)N/q} }{V(x_{j+1}, (\frac 98)^{k_0})^{1/q}} \left(\int_{B(x,2d(x,x_{M_0}))}|\nabla f|^q\,d\mu\right)^{1/q}\nonumber\\
&&\le \frac{C(\frac 98)^{j(1-N/q)}d(x,x_{M_0})^{N/q}} {V(x,  2d(x,x_{M_0}))^{1/q}} \left(\int_{B(x,2d(x,x_{M_0}))}|\nabla f|^q\,d\mu\right)^{1/q}.
\end{eqnarray}
Combining \eqref{2.8} and \eqref{2.9}, we further deduce from \eqref{2.6} and the fact $q>N$ that
\begin{eqnarray*}
II&&\le \sum_{j=0}^{k_0-1} \frac{C(\frac 98)^{j(1-N/q)}d(x,x_{M_0})^{N/q}} {V(x,  2d(x,x_{M_0}))^{1/q}} \left(\int_{B(x,2d(x,x_{M_0}))}|\nabla f|^q\,d\mu\right)^{1/q}\\
&&\le \sum_{j=0}^{k_0-1} C(\frac 98)^{j(1-N/q)}d(x,x_{M_0})^{N/q}\mathcal{M}_q\left(|\nabla f|\chi_{8B}\right)(x) \\
&&\le C(\frac 98)^{k_0(1-N/q)}d(x,x_{M_0})^{N/q} \mathcal{M}_q\left(|\nabla f|\chi_{8B}\right)(x)\\
&&\le Cr_B \mathcal{M}_q\left(|\nabla f|\chi_{8B}\right)(x).
\end{eqnarray*}

For the term $III$,  by the choice of the points $\{x_{j}\}_{j=0}^{k_0}$, we see that
\begin{eqnarray*}
d(x,x_{M_0})-2\left[\left(\frac 98\right)^{k_0+1}+\left(\frac 98\right)^{k_0}\right]&&\ge \sum_{j=0}^{k_0}{\left(\frac 98\right)^{j}} -
\frac{17}{4}\left(\frac 98\right)^{k_0}
\ge \frac{\left(\frac 98\right)^{k_0+1}-1}{\frac 98-1}-\frac{17}{4}\left(\frac 98\right)^{k_0}\ge \frac{19}{4} \left(\frac 98\right)^{k_0}-8.
\end{eqnarray*}
By \eqref{2.5} one has $108<d(x,x_{M_0})+8\le 9\left(9/8\right)^{k_0+1}$, which implies
$$\frac{19}{4} \left(\frac 98\right)^{k_0}-8=\frac{19}{4}\frac{8}{9} \left(\frac 98\right)^{k_0+1}-8>12\frac{38}{9}-8>40,$$
and hence,
\begin{equation}\label{2.10}
\dist (x,M_0)-2\left[\left(\frac 98\right)^{k_0+1}+\left(\frac 98\right)^{k_0}\right]\ge d(x,x_{M_0})-1-2\left[\left(\frac 98\right)^{k_0+1}+\left(\frac 98\right)^{k_0}\right]\ge 39,
\end{equation}
from which it follows that $2B(x,\frac{17}{8}(\frac 98)^{k_0})\cap M_0=\emptyset$. By \eqref{2.6},
$B(x,\frac{17}{8}(\frac 98)^{k_0})\subset B(x,d(x,x_{M_0}))$, where $d(x,x_{M_0})<3r_B$.
Thus,
by applying $(P^E_2)$, $(D_N)$ and the H\"older inequality, we conclude that
\begin{eqnarray*}
III&&\le \left| f_{B_{x}}-f_{B(x,\frac{17}{8}(\frac 98)^{k_0})}\right|+\left|f_{B_{k_0}}-f_{B(x,\frac{17}{8}(\frac 98)^{k_0})}\right|\\
&&\le \frac{C}{V(x, \frac{17}{8}(\frac 98)^{k_0})}\int_{B(x,\frac{17}{8}(\frac 98)^{k_0})}|f-f_{B(x,\frac{17}{8}(\frac 98)^{k_0})}|\,d\mu\nonumber\\
&&\le \frac{C (\frac 98)^{k_0}}{V(x,\frac{17}{8}(\frac 98)^{k_0})^{1/q}}\left(\int_{B(x,\frac{17}{8}(\frac 98)^{k_0})}|\nabla f|^q\,d\mu\right)^{1/q}\nonumber\\
&&\le Cr_B\mathcal{M}_q\left(|\nabla f|\chi_{8B}\right)(x).
\end{eqnarray*}

For the term $IV$, \eqref{2.10} implies $2B_x\cap M_0=\emptyset$, where $B_x=B(x,(9/8)^{k_0+1})$. Therefore, by applying $(P^E_2)$, $(D_N)$ and the approach similar to \eqref{2.2}, we find
\begin{eqnarray*}
IV=|f(x)-f_{B_x}|&&\le Cr_B\mathcal{M}_q\left(|\nabla f|\chi_{8B}\right)(x).
\end{eqnarray*}

For $x\in B$ with $d(x,x_{M_0})>100$, from the estimates for $I,II,III,IV$, it follows that
\begin{eqnarray}\label{2.11}
|f(x)-f_{B_{x_{M_0}}}|&&\le Cr_B\mathcal{M}_q\left(|\nabla f|\chi_{8B}\right)(x).
\end{eqnarray}

Apparently, the same proofs in Step 1 and Step 2 work for $y\in B$ and yield that
\begin{eqnarray}\label{2.12}
|f(y)-f_{B_{x_{M_0}}}|&&\le Cr_B\mathcal{M}_q\left(|\nabla f|\chi_{8B}\right)(y).
\end{eqnarray}

A combination of \eqref{2.1x}, \eqref{2.4}, \eqref{2.11} and \eqref{2.12} completes the proof of the Claim.
\end{proof}

\begin{rem}\label{rem-telescopic}\rm
The approach used in proving \eqref{2.4} is called ``telescopic approach" in the literature, see \cite[p. 211, proof of Theorem 8.1.7, and p. 243, Section 8.5]{hkst}.
\end{rem}

\begin{prop}\label{exp-poincare}
Assume that $(D_{N})$ holds on $M$  with $0<N<\infty$ and that $(P_{2,\loc})$ holds.
Then there exist $N_\mu>0$ and $C>0$ such that for any ball $B=B(x_0,r)$, $r>1$, and any $f\in C^1(\overline {2B})$, it holds
$$\fint_B|f-f_B|^2\,d\mu\le Cr^{2N_\mu+2+N}\fint_{2B}|\nabla f|^2\,d\mu.\leqno(P_{G})$$
\end{prop}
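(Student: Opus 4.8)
The plan is to prove $(P_G)$ by the same telescopic chaining strategy used in the proof of Theorem~\ref{n-poin}, but now keeping track of the (possibly large) comparison constant that appears when one does not have the reverse doubling inequality available. The key point is that $(D_N)$ forces the existence of a global exponent $N_\mu$ controlling the number of balls of a fixed radius needed to cover a given ball along a geodesic chain, and that $(P_{2,\loc})$ controls each link of the chain up to scale $1$.

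First I would reduce, as in Theorem~\ref{n-poin}, to a weak Poincar\'e inequality of the form
$$\fint_B|f-f_B|^2\,d\mu\le Cr^{2N_\mu+1+N}\fint_{2B}|\nabla f|^2\,d\mu,$$
writing $\fint_B|f-f_B|^2\le C\fint_B\fint_B|f(x)-f(y)|^2\,d\mu(x)\,d\mu(y)$ and estimating $|f(x)-f(y)|$ for $x,y\in B=B(x_0,r)$. Fix such $x,y$. Take a geodesic from $x$ to $y$ through $x_0$ and build a chain of balls $B_0,B_1,\dots,B_m$ with $B_0\ni x$, $B_m\ni y$, each $B_j$ of radius comparable to $1$ (say radius in $[1/2,2]$), consecutive balls overlapping, all contained in $2B$, and with $m\lesssim r$ since the geodesic has length at most $2r$ and the metric is a length space. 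Then $|f(x)-f(y)|\le \sum_{j}|f_{B_j}-f_{B_{j-1}}|+|f(x)-f_{B_0}|+|f(y)-f_{B_m}|$, and each term is bounded via $(P_{2,\loc})$ (with $r_0=4$, say) by $C\big(\fint_{2B_j}|\nabla f|^2\,d\mu\big)^{1/2}$. Using $(D_N)$ to compare $\mu(2B_j)$ with $\mu(2B)$ — here one uses that for $r>1$, $V(x_0,r)/V(x_j,1)\lesssim r^{N_\mu}$ for a suitable $N_\mu$ (obtainable from \eqref{reverse-doubling}, or crudely from $(D_N)$ applied after doubling up to scale $r$) — each term is bounded by $C r^{N_\mu/2} V(x_0,r)^{-1/2}\big(\int_{2B}|\nabla f|^2\,d\mu\big)^{1/2}$.

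Summing the $m\lesssim r$ terms and applying Cauchy--Schwarz over the sum (which contributes a factor $m^{1/2}\lesssim r^{1/2}$ if one keeps the $L^2$ structure, or simply $m\lesssim r$ if one sums absolute values and then squares), one arrives at
$$|f(x)-f(y)|^2\le C r^{1+N_\mu}\,\frac{1}{V(x_0,r)}\int_{2B}|\nabla f|^2\,d\mu\le C r^{1+N_\mu}\,\frac{V(x_0,2r)}{V(x_0,r)}\fint_{2B}|\nabla f|^2\,d\mu,$$
and then $(D_N)$ (or simply $(D)$) bounds $V(x_0,2r)/V(x_0,r)$ by a constant, while writing the bound in terms of $\fint_{2B}$ costs nothing more; absorbing everything and being generous with exponents gives the stated $r^{2N_\mu+1+N}$ (the exact power is immaterial — the proposition only asserts \emph{some} polynomial loss). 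Averaging over $x,y\in B$ finishes the proof.

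The main obstacle is bookkeeping the constant: one must choose $N_\mu$ so that the volume comparison $V(x_0,r)\lesssim r^{N_\mu}V(x_j,1)$ holds uniformly for all the chain balls $B_j\subset 2B$ with $r>1$, which is exactly where $(D_N)$ enters (it gives $V(x_0,r)\le V(x_j, r + d(x_0,x_j))\lesssim (2r)^{N} V(x_j,1) \lesssim r^N V(x_j,1)$ for $r>1$, so in fact one may take $N_\mu$ related to $N$); the rest is the now-standard telescopic argument from \cite{hak}, already used above in Theorem~\ref{n-poin}, and carries over verbatim. No genuinely new idea is needed beyond tolerating the polynomial-in-$r$ degeneration of the Poincar\'e constant, which is the whole point of stating $(P_G)$ separately from $(P_2)$.
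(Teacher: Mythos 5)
Your overall plan---reduce to a weak inequality, chain unit balls along a geodesic, count the $\lesssim r$ links, and use $(D_N)$ for the volume comparison---is the same strategy the paper uses, but there is a genuine gap in how you begin the chain, and it is not just a matter of bookkeeping.

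You estimate the pointwise quantity $|f(x)-f(y)|$ and claim that the endpoint terms $|f(x)-f_{B_0}|$ and $|f(y)-f_{B_m}|$ are ``bounded via $(P_{2,\loc})$ by $C\bigl(\fint_{2B_j}|\nabla f|^2\,d\mu\bigr)^{1/2}$.'' This is false: $(P_{2,\loc})$ controls the $L^2$ \emph{average} $\bigl(\fint_{B_0}|f-f_{B_0}|^2\,d\mu\bigr)^{1/2}$, not the pointwise deviation $|f(x)-f_{B_0}|$. To reach the point $x$ you must telescope over shrinking balls $B(x,2^{-k})$, and what you then pay is the $2$-maximal function $\mathcal M_2(|\nabla f|\chi_{2B})(x)$, not a single integral average. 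That is precisely what happens in the proof of Theorem \ref{n-poin}, where the chain produces $\mathcal M_q(|\nabla f|\chi_{8B})$ with $q>N\vee 2$ and the final averaging step succeeds because there $p>q$, i.e.\ $\mathcal M_q$ is bounded on $L^{p}$. For the present $L^2$ statement one has $p=q=2$, and $\mathcal M_2$ (equivalently, the Hardy--Littlewood maximal operator applied to $|\nabla f|^2$) is not bounded on $L^2$ (only weak $(1,1)$ after the square). So the argument does \emph{not} ``carry over verbatim'' from Theorem \ref{n-poin}; that step is blocked at $p=2$. Indeed the intermediate estimate you write, $|f(x)-f(y)|^2\le Cr^{1+N_\mu}V(x_0,r)^{-1}\int_{2B}|\nabla f|^2\,d\mu$, is a pointwise Morrey inequality which would make every $W^{1,2}_{\loc}$ function locally bounded; this already fails on $\mathbb R^2$ (take $f=\log\log(e/|x|)$ on the unit disk), and $\mathbb R^n$ with $n\ge 2$ satisfies every hypothesis of the proposition.

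The paper avoids this by never passing to points. It covers $B$ by $\lesssim r^{N_\mu}$ unit balls $\{B_i\}$ and writes
$\bigl(\fint_B|f-f_B|^2\bigr)^{1/2}\lesssim \mu(B)^{-1/2}\sum_i\bigl(\int_{B_i}|f-f_{B_0}|^2\bigr)^{1/2}$,
then chains \emph{averages over unit balls} from $B_i$ to $B_0$; the chain begins with $\bigl(\fint_{B_i}|f-f_{B_i}|^2\bigr)^{1/2}$, which is exactly what $(P_{2,\loc})$ controls, and every remaining link is a difference of ball averages, again controlled by $(P_{2,\loc})$. No maximal function is needed and no pointwise inequality is asserted. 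Your argument can be repaired by adopting that structure (stop the chain at the unit ball $B_{j(x)}$ containing $x$ and average the term $\fint_B|f(x)-f_{B_{j(x)}}|^2\,d\mu(x)$ over the cover instead of telescoping to $x$), but as written it is not correct.
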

\begin{proof}
Based on the validity of $(P_{2,\loc})$, we only need to show $(P_{G})$ for balls $B=B(x_0,r)$
when $r$ is sufficiently large. Let us assume $r>100$. Set $B_0=B(x_0,1)$.
By $(D_N)$, we can find a sequence of  balls via an $\epsilon$-net with $\epsilon=1/2$, $\{B_i\}_{1\le i\le C(r)}$, where $C(r)$ is an integer not bigger than $Cr^{N_\mu}$,
$N_\mu>0$,
such that each ball $B_i$ is of radius one and the center of $B_i$ is located in $B$, $\frac 12 B_i\cap \frac 12 B_j=\emptyset$ for any $i\neq j$, $0\le i,j\le C(r)$; see \cite[p. 102]{hkst} for instance.
By the choice of $B_i$ we have
\begin{equation}\label{2.13}
B\subset \cup_{i=0}^{C(r)}B_i\subset \cup_{i=0}^{C(r)}3B_i\subset B(x_0,r+3)\subset 2B.
\end{equation}

Write
\begin{eqnarray*}
\left(\fint_B|f-f_B|^2\,d\mu\right)^{1/2}\le 2\left(\fint_B|f-f_{B_0}|^2\,d\mu\right)^{1/2}\le \frac{2}{\mu(B)^{1/2}}\sum_{i=0}^{C(r)}\left(\int_{B_i}|f-f_{B_0}|^2\,d\mu\right)^{1/2}.
\end{eqnarray*}
If  $i=0$, then $(P_{2,\loc})$ implies
$$\int_{B_0}|f-f_{B_0}|^2\,d\mu\le C\int_{B_0}|\nabla f|^2\,d\mu.$$
For other $i$'s, let $x_i$ be the center of the ball $B_i$, and there exists a geodesic $\gamma(x_0,x_i)$ that links $x_i$ to $x_0$ with length equaling $d(x_0,x_i)$. As $x_i\in B$, one has $d(x_0,x_i)<r$ and $\gamma(x_0,x_i)\subset B$.
Along $\gamma(x_0,x_i)$, we may find a sequence of balls $\{B_{i,j}\}_{1\le j\le C(i)}$ with $C(i)$ be an integer not bigger than $2d(x_0,x_i)$, such that
each ball $B_{i,j}$ is of radius one and has center on $\gamma(x_0,x_i)$,
and $B_{i,1}\cap B_0\neq\emptyset$, $B_{i,C(i)}\cap B_i\neq \emptyset$ and
 $B_{i,j}\cap B_{i,j-1}\neq\emptyset$ if $2\le j\le C(i)$.
As the balls  $B_{i,j}$ are of radius one and have center on $\gamma(x_0,x_i)$, where $\gamma(x_0,x_i)\subset B$,
 we have
\begin{equation}\label{2.14}
\cup_{1\le j\le C(i)}B_{i,j}\subset\cup_{1\le j\le C(i)}3B_{i,j}\subset B(x_0,r+3)\subset 2B.
\end{equation}

 A chain argument implies
 \begin{eqnarray*}
   \left(\fint_{B_i}|f-f_{B_0}|^2\,d\mu\right)^{1/2}&&\le  \left(\fint_{B_i}|f-f_{B_i}|^2\,d\mu\right)^{1/2}+|f_{B_i}-f_{B_{i,C(i)}}|+|f_{B_0}-f_{B_{i,1}}|+\sum_{j=2}^{C(i)}|f_{B_{i,j}-f_{B_{i,j-1}}}|.
 \end{eqnarray*}
 Notice that, as ${B_0}\cap {B_{i,1}}\neq\emptyset$, $B_{i,1}\subset 3B_0$, and therefore,
\begin{eqnarray*}
 |f_{B_0}-f_{B_{i,1}}|&&\le |f_{3B_0}-f_{B_0}|+|f_{3B_0}-f_{B_{i,1}}|\le C\fint_{3B_0}|f-f_{3B_0}|\,d\mu \le C\left(\fint_{3B_0}|\nabla f|^2\,d\mu\right)^{1/2}.
  \end{eqnarray*}
Similarly, we conclude via $(D_N)$, \eqref{2.13} and \eqref{2.14} that  for each $1\le i\le C(r)$,
  \begin{eqnarray*}
   &&\left(\fint_{B_i}|f-f_{B_0}|^2\,d\mu\right)^{1/2}\\
   &&\le C\left(\fint_{3B_0}|\nabla f|^2\,d\mu\right)^{1/2}+ C\left(\fint_{3B_{i,C(i)}}|\nabla f|^2\,d\mu\right)^{1/2} +C\sum_{j=1}^{C(i)-1}\left(\fint_{3B_{i,j}}|\nabla f|^2\,d\mu\right)^{1/2} \\
    &&\le C\frac{r^{N/2}}{\mu(2B)^{1/2}}\left[\left(\int_{3B_0}|\nabla f|^2\,d\mu\right)^{1/2}+ \left(\int_{3B_{i,C(i)}}|\nabla f|^2\,d\mu\right)^{1/2} +\sum_{j=1}^{C(i)-1}\left(\int_{3B_{i,j}}|\nabla f|^2\,d\mu\right)^{1/2}\right] \\
   &&\le C\frac{C(i) r^{N/2}}{\mu(2B)^{1/2}}\left(\int_{2B}|\nabla f|^2\,d\mu\right)^{1/2}\le C\frac{r^{N/2+1}}{\mu(2B)^{1/2}}\left(\int_{2B}|\nabla f|^2\,d\mu\right)^{1/2}.
 \end{eqnarray*}
 Summarizing these estimates, we conclude that
 \begin{eqnarray*}
\left(\fint_B|f-f_B|^2\,d\mu\right)^{1/2}&&\le \frac{2}{\mu(B)^{1/2}}\sum_{i=0}^{C(r)}\left(\int_{B_i}|f-f_{B_0}|^2\,d\mu\right)^{1/2}\\
&&\le \frac{C}{\mu(B)^{1/2}}\left[\left(\int_{3B_0}|\nabla f|^2\,d\mu\right)^{1/2}+\sum_{i=1}^{C(r)}\frac{r^{N/2+1}\mu(B_i)^{1/2}}{\mu(2B)^{1/2}}\left(\int_{2B}|\nabla f|^2\,d\mu\right)^{1/2}\right]\\
&&\le \frac{CC(r)r^{N/2+1}}{\mu(B)^{1/2}}\left(\int_{2B}|\nabla f|^2\,d\mu\right)^{1/2}\\
&&\le {Cr^{N_\mu+N/2+1}}\left(\fint_{2B}|\nabla f|^2\,d\mu\right)^{1/2},
\end{eqnarray*}
where in the third inequality we used $\mu(B_i)/\mu(2B)\le 1$. This gives the desired estimate.
\end{proof}
%\begin{rem}\rm
%We believe a more careful analysis would yield a sharper Poincar\'e inequality as
% \begin{eqnarray*}
%\fint_B|f-f_B|\,d\mu&&\le  {Cr^{N/2}}\left(\fint_{2B}|\nabla f|^2\,d\mu\right)^{1/2},\ \forall\ r>1.
%\end{eqnarray*}
%We did not pursue this since the version $(P_{G})$ is sufficient for our purpose; see  Proposition \ref{key-estimate} below.
%\end{rem}

\begin{thm}\label{asy-poin}
If there exists $C_M>0$ such that for each $x\in M$,  it holds
$$Ric_M(x)\ge -\frac{C_M}{[d(x,x_M)+1]^2},$$
then $(P^E_2)$ holds on $M$.
\end{thm}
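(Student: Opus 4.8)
The plan is to deduce $(P_2^E)$ from Buser's inequality together with the quadratic decay hypothesis, exploiting that the latter makes the exponential constant in Buser's estimate harmless on balls that stay away from the compact core. I will use Buser's theorem in the following form: if $Ric_M\ge -K$ on $B(x,2r)$ for some $K\ge 0$, then there is $C=C(M)$ such that
\[
\fint_{B(x,r)}\Big|f-f_{B(x,r)}\Big|^2\,d\mu\le C\,e^{C(1+\sqrt{K}\,r)}\,r^2\fint_{B(x,2r)}|\nabla f|^2\,d\mu
\]
for every $f\in C^1$ (cf. \cite{bu82}, as in Lemma \ref{pend-ploc}). Note first that the hypothesis trivially gives the global bound $Ric_M\ge -C_M$ on $M$, since $d(\cdot,x_M)+1\ge 1$; this will take care of balls of bounded radius.

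Next comes the geometric observation that drives everything. Let $B=B(x_0,r)$ be a ball with $2B\cap M_0=\emptyset$. Then $d(x_0,x_M)\ge d(x_0,M_0)\ge 2r$, so every $y\in \tfrac32 B$ satisfies $d(y,x_M)\ge 2r-\tfrac32 r=\tfrac r2$, and therefore $Ric_M(y)\ge -C_M/(r/2)^2=-4C_M/r^2$ on $\tfrac32 B$. I would then apply Buser's inequality on the concentric ball $B(x_0,\tfrac34 r)$ with $K=4C_M/r^2$; since $\sqrt{K}\cdot\tfrac34 r=\tfrac32\sqrt{C_M}$ is a fixed constant, the exponential factor is absorbed and one gets a constant $C=C(M)$ with
\[
\fint_{B(x_0,\frac34 r)}\Big|f-f_{B(x_0,\frac34 r)}\Big|^2\,d\mu\le C\,r^2\fint_{\frac32 B}|\nabla f|^2\,d\mu .
\]
Rescaling, this is a scale-invariant \emph{weak} $L^2$-Poincar\'e inequality, with a fixed dilation of the gradient ball and a constant independent of the scale, valid for all balls avoiding $M_0$ in the relevant dilated sense; and, using the global bound $Ric_M\ge -C_M$ in Buser's inequality, the same weak inequality holds for balls of bounded radius.

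Finally, since $(M,d)$ is a geodesic (length) space, a weak $L^2$-Poincar\'e inequality self-improves to the strong one $\fint_B|f-f_B|^2\,d\mu\le Cr_B^2\fint_B|\nabla f|^2\,d\mu$ by the usual telescoping/chaining argument (Haj\l asz–Koskela \cite[Section 9]{hak}), exactly the mechanism already used in the proof of Theorem \ref{n-poin}. One has to check that the chaining stays inside the admissible family: a sub-ball $B'=B(z,s)\subset B$ of a ball with $2B\cap M_0=\emptyset$ again satisfies $cB'\cap M_0=\emptyset$ for a fixed $c>1$ once $s$ is not too close to $r_B$, while the finitely many sub-balls of radius comparable to $r_B$, and all sub-balls of bounded radius, are covered directly by the previous paragraph. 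Putting the pieces together yields $(P_2^E)$.

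The hard part will be precisely this last bookkeeping at the interface between the compact core and the ends: matching the dilation constants so that the geodesic self-improvement applies to the restricted family $\{B:2B\cap M_0=\emptyset\}$ rather than to all balls of $M$, and checking that the transition scale is absorbed by the crude global bound $Ric_M\ge -C_M$. A cleaner but essentially equivalent route, bypassing the self-improvement step altogether, would be to invoke the Buser lower bound for the first nonzero Neumann eigenvalue of a geodesic ball directly, which already produces the strong form with the gradient over the same ball; either way, the analytic heart of the argument is the reduction, via quadratic Ricci decay, to a Buser constant that is uniform in the radius.
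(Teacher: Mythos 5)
Your core idea matches the paper's: Buser's inequality, combined with the quadratic Ricci decay and the geometric fact that $2B\cap M_0=\emptyset$ forces $d(\cdot,x_M)\gtrsim r_B$ on the relevant balls, so that the exponential factor $e^{\sqrt K r_B}$ becomes a uniform constant. Where you diverge is in the form of Buser's inequality you take as input, and this difference matters. You use a \emph{weak} $L^2$-Poincar\'e (gradient integral over a dilated ball $2B$, or your $\tfrac32 B$), which leaves you needing the geodesic-space self-improvement to remove the dilation; as you yourself observe, that chaining is awkward here because the admissible family has been shrunk to balls avoiding $M_0$, and the sub-balls arising in the chain only satisfy the right kind of separation from $M_0$ after further radius constraints and bookkeeping. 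The paper sidesteps this entirely by invoking Buser in the \emph{strong} $L^1$ form $\int_B|f-f_B|\,d\mu \le C e^{\sqrt K r_B} r_B \int_B|\nabla f|\,d\mu$, with gradient over the \emph{same} ball $B$ (essentially the Neumann-eigenvalue/Cheeger-constant formulation you flag at the end as the ``cleaner route''). With that in hand, quadratic decay gives a uniform constant for every $B$ with $2B\cap M_0=\emptyset$, and --- since every sub-ball $B'\subset B$ inherits the same Ricci lower bound $\ge -C_M/r_B^2$, giving $\sqrt K\, r_{B'}\le\sqrt{C_M}$ uniformly --- the only step left is the exponent upgrade $L^1\to L^2$ via Haj\l asz--Koskela \cite[Theorem~5.1]{hak}, which localizes without the dilation headache. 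In short, your argument is morally correct but leaves the interface bookkeeping unresolved; choosing the strong form of Buser, as the paper does, eliminates exactly the step you identify as the ``hard part,'' so you should adopt that version rather than the weak one.
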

\begin{proof}
By Buser's inequality (cf. \cite{bu82,hak}), there exists a constant $C>0$ depending only on the dimension such that,
for any $f\in C^1(\bar{B})$,
$$\int_B|f-f_B|\,d\mu\le Ce^{\sqrt{K}r_B}r_B\int_B|\nabla f|\,d\mu;$$
where $K\ge 0$ and the Ricci curvature on $B$ is not less than $-K$.

For any $B\subset M$ with $2B\cap M_0=\emptyset$, we then have
$$Ric_M(x)\ge -\frac{C_M}{[r_B+1]^2},\ \forall \ x\in B.$$
This together with Buser's inequality implies
$$\int_B|f-f_B|\,d\mu\le Ce^{\sqrt{\frac{C_M}{r_B^2}}r_B}r_B\int_B|\nabla f|\,d\mu\le Cr_B\int_B|\nabla f|\,d\mu,$$
which together with \cite[Theorem 5.1]{hak} further implies
$$\fint_B|f-f_B|^2\,d\mu\le Cr_B^2\fint_B|\nabla f|^2\,d\mu,$$
as desired.
\end{proof}

\section{Riesz transform for $p$ below the lower dimension}

\subsection{Riesz transform via heat kernel regularity}
\hskip\parindent In this section, we study the behavior of the Riesz transform on $L^p(M)$, where $p\in (2,n)$.
In what follows, let $A_r:=I-(I-e^{-r^2\mathcal L})^m$, where $m\in\cn$ is chosen such that $m>N/4$; see \cite[p. 932]{acdh}.
Let $T:=\nabla \mathcal{L}^{-1/2}$. The sharp maximal function $\mathcal{M}^\#_{T,A}f$ for every locally integrable function $f$ is given as
$$\mathcal{M}^\#_{T,A}f(x):=\sup_{B:\, x\in B}\left(\fint_{B}|T(1-A_{r_B})f|^2\,d\mu\right)^{1/2}.$$

\begin{lem}\label{sharp-maximal}
Assume that $(D_{N})$ holds on $M$  with $0< N<\infty$. There exists $C>0$ such that for any $f\in L^2(M)$,   any ball $B$ and $x\in B$, it holds
\begin{equation}
\left(\fint_{B}|T(I-A_{r_B})f|^2\,d\mu\right)^{1/2}\le \mathcal{M}^\#_{T,A}f(x)\le C\mathcal{M}_2(|f|)(x).
\end{equation}
\end{lem}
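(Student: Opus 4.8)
The plan is to prove the two inequalities separately. The first one, $\bigl(\fint_{B}|T(I-A_{r_B})f|^2\,d\mu\bigr)^{1/2}\le \mathcal{M}^\#_{T,A}f(x)$ for $x\in B$, is immediate from the very definition of the sharp maximal function: the supremum defining $\mathcal{M}^\#_{T,A}f(x)$ ranges over all balls containing $x$, and $B$ is one such ball, so the left-hand side is just one term in the sup. No work is needed here.

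For the substantive inequality $\mathcal{M}^\#_{T,A}f(x)\le C\mathcal{M}_2(|f|)(x)$, I would fix a ball $B\ni x$ with radius $r=r_B$ and estimate $\bigl(\fint_{B}|T(I-A_{r})f|^2\,d\mu\bigr)^{1/2}$ by a standard annular (Littlewood--Paley-type) decomposition of $f$. Write $f=\sum_{j\ge 1} f_j$ where $f_1 = f\chi_{4B}$ and $f_j = f\chi_{2^{j+1}B\setminus 2^{j}B}$ for $j\ge 2$. For the local part $f_1$, use the $L^2$-boundedness of the Riesz transform $T$ (which is $(R_2)$, automatic by integration by parts) together with the uniform $L^2$-boundedness of $I-A_r$ (a bounded analytic function of $\mathcal{L}$), giving
\[
\left(\fint_{B}|T(I-A_{r})f_1|^2\,d\mu\right)^{1/2}\le \frac{C}{\mu(B)^{1/2}}\|f_1\|_2\le C\left(\fint_{4B}|f|^2\,d\mu\right)^{1/2}\le C\mathcal{M}_2(|f|)(x),
\]
using $(D_N)$ to pass from $\mu(4B)$ to $\mu(B)$.

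For the far annuli $f_j$, $j\ge 2$, the key is an off-diagonal ($L^2$-to-$L^2$) decay estimate for the operator $T(I-A_r)$ between the ball $B$ and the annulus $2^{j+1}B\setminus 2^jB$. Here I would exploit the structure $I-A_r = (I-e^{-r^2\mathcal{L}})^m$ with $m>N/4$: expanding the $m$-th power gives a sum of heat semigroups $e^{-kr^2\mathcal{L}}$, $1\le k\le m$, and the composition $T(I-e^{-r^2\mathcal L})^m = \nabla \mathcal{L}^{-1/2}(I-e^{-r^2\mathcal L})^m$ can be written via the subordination formula as an integral $\int_0^\infty \nabla e^{-s\mathcal{L}}(\cdots)\,\frac{ds}{\sqrt s}$ against a kernel with good decay in $s/r^2$; the cancellation of $(I-e^{-r^2\mathcal{L}})^m$ kills the small-$s$ singularity. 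Then the $(UE)$ Gaussian bound — combined with the standard Davies--Gaffney/integrated Gaussian estimate for $\nabla e^{-s\mathcal L}$ that already follows from $(UE)$ (see \cite{acdh}) — yields
\[
\left(\int_{B}|T(I-A_{r})f_j|^2\,d\mu\right)^{1/2}\le C\, g(j)\,\mu(2^{j+1}B)^{1/2}\left(\fint_{2^{j+1}B}|f|^2\,d\mu\right)^{1/2},
\]
with $g(j)$ decaying fast enough (exponentially in $4^j$, or at least like $2^{-j\varepsilon}$ after summing against the volume growth $\mu(2^{j+1}B)\le C2^{jN}\mu(B)$ from $(D_N)$). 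Dividing by $\mu(B)^{1/2}$, bounding $\bigl(\fint_{2^{j+1}B}|f|^2\bigr)^{1/2}\le \mathcal{M}_2(|f|)(x)$, and summing the geometric series in $j$ gives the claim. The whole scheme is exactly the one used in \cite[Section 2, proof of the key Lemma]{acdh}; the only adaptation is that $(P_2)$ is not available, but it is not needed for this particular lemma — only $(D_N)$, $(UE)$ and the trivial $(R_2)$ enter.

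The main obstacle is the off-diagonal estimate for $\nabla e^{-s\mathcal{L}}$ away from the diagonal under only $(UE)$ (without a pointwise gradient heat kernel bound): one must use the $L^2$-integrated form, i.e. $\||\nabla e^{-s\mathcal L}|(f\chi_F)\|_{L^2(E)}\le \frac{C}{\sqrt s}e^{-c\,d(E,F)^2/s}\|f\chi_F\|_2$, which holds as a consequence of $(UE)$ alone, and then carefully track that the $m$-fold difference $(I-e^{-r^2\mathcal L})^m$ with $m>N/4$ produces enough decay in the time variable to make the subordination integral converge and to beat the volume factor $2^{jN}$. Everything else is bookkeeping with $(D_N)$ and the Cauchy--Schwarz inequality.
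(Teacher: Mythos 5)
Your decomposition and overall strategy reproduce the argument behind \cite[Lemma 3.1]{acdh}, which is precisely the source the paper cites here, so the scheme is right. The first inequality is indeed immediate from the definition of the sharp maximal function, and for the second your annular split with $(R_2)$ on $4B$ and off-diagonal decay on the far annuli, combined with $(D_N)$ and the choice $m>N/4$ to dominate the volume factor $\mu(2^{j+1}B)\lesssim 2^{jN}\mu(B)$, is exactly the intended argument.

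There is, however, one point that needs correcting and that is not merely cosmetic: you invoke $(UE)$ as an ingredient, asserting that the $L^2$-integrated off-diagonal gradient estimate
\[
\bigl\||\nabla e^{-s\mathcal L}|(f\chi_F)\bigr\|_{L^2(E)}\le \frac{C}{\sqrt s}\,e^{-c\,d(E,F)^2/s}\,\|f\|_{L^2(F)}
\]
``follows from $(UE)$,'' and you list $(UE)$ among the hypotheses that ``enter'' the lemma. But the lemma as stated assumes only $(D_N)$, and indeed no heat kernel upper bound is needed at all. This gradient Gaffney (Davies--Gaffney) estimate holds on \emph{any} complete Riemannian manifold, with no curvature or $(UE)$ hypothesis: it follows, for instance, from the ordinary $L^2$ Gaffney estimate for $e^{-s\mathcal L}$ together with a Caccioppoli inequality, or from the finite propagation speed of the associated wave group. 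Once you replace the appeal to $(UE)$ by this elementary fact, your proof is correct under $(D_N)$ alone, which is what the statement of the lemma requires.
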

\begin{proof}
  See \cite[Lemma 3.1]{acdh}.
\end{proof}

\begin{lem}\label{key-lemma}
Assume that $(D_{N})$ holds on $M$  with $0< N<\infty$, and that
$(UE)$ and $(G_{p_0})$ for some $p_0\in (2,\infty)$  hold. Then for every $p\in (2,p_0)$,
there exist $C,\tau>0$ such that  for every ball $B$ with radius $r_B$ and every
 $f\in L^2(M)$ supported in $U_i =2^{i+1}B\setminus 2^iB$,
$i\ge 2$, or $U_1 =4B$, one has
\begin{equation}\label{key-good}
\left(\fint_{B}|\nabla A_{r_B}f|^p\,d\mu\right)^{1/p}\le \frac{Ce^{-\tau 4^i}}{r_B}\left(\frac{1}{\mu(2^iB)}\int_{U_i}|f|^2\,d\mu\right)^{1/2}.
\end{equation}
\end{lem}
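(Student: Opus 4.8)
The plan is to follow the strategy of \cite[Section 3]{acdh}, adapting it to the doubling setting with the parameter $N$ in place of a fixed Euclidean dimension. The basic mechanism is: $(G_{p_0})$ gives $L^{p_0}$-control of $|\nabla e^{-t\mathcal L}|$, $(UE)$ gives Gaussian off-diagonal decay of the operators $e^{-t\mathcal L}$, and then an interpolation/composition argument transfers that decay to $|\nabla A_{r_B}|$ acting on functions supported far from $B$, with a loss that is still summable in $i$. First I would expand $A_{r_B}=I-(I-e^{-r_B^2\mathcal L})^m$ by the binomial theorem into a sum of terms $c_k e^{-k r_B^2\mathcal L}$, $1\le k\le m$, so it suffices to bound each $|\nabla e^{-k r_B^2\mathcal L}f|$ on $B$. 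Writing $r=r_B$ and $t=kr^2$, I factor $e^{-t\mathcal L}=e^{-\frac t2\mathcal L}e^{-\frac t2\mathcal L}$ and estimate
$$
\left(\fint_B |\nabla e^{-t\mathcal L}f|^p\,d\mu\right)^{1/p}
\le \left(\fint_B |\nabla e^{-\frac t2\mathcal L}(e^{-\frac t2\mathcal L}f)|^p\,d\mu\right)^{1/p}.
$$
By Hölder this is $\le (\fint_B |\nabla e^{-\frac t2\mathcal L}g|^{p_0})^{1/p_0}$ with $g=e^{-\frac t2\mathcal L}f$; here one must be a little careful because $(G_{p_0})$ is a global $L^{p_0}\to L^{p_0}$ bound, not a local one, so I would instead use the Davies--Gaffney / $L^2$-$L^{p_0}$ mapping properties of $\nabla e^{-s\mathcal L}$ together with $(UE)$ and $(D_N)$ in the standard way — i.e. the argument from \cite[Lemma 3.3]{acdh} (or \cite[Proposition 3.6]{acdh}) which produces, for $f$ supported in an annulus $U_i$ at distance $\sim 2^i r$ from $B$, the bound
$$
\left(\fint_B|\nabla e^{-t\mathcal L}f|^{p_0}\,d\mu\right)^{1/p_0}
\le \frac{C}{r}\,e^{-c\,\frac{(2^i r)^2}{t}}\left(\frac1{\mu(2^iB)}\int_{U_i}|f|^2\,d\mu\right)^{1/2}.
$$
Since $t=kr^2\le mr^2$, the exponent $(2^ir)^2/t$ is $\gtrsim 4^i$, which gives the claimed factor $e^{-\tau 4^i}$; downgrading the exponent $p_0$ to any $p\in(2,p_0)$ on the left costs nothing by Hölder (and absorbs the $U_1=4B$ case, where $i=1$ and the annulus condition degenerates to a local estimate handled directly by $(G_{p_0})$, $(UE)$, $(D_N)$).

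The volume normalization requires the usual doubling bookkeeping: passing from $\fint_B$ to $\frac1{\mu(2^iB)}\int_{U_i}$ produces a factor $\mu(2^iB)/\mu(B)$ which under $(D_N)$ is at most $C2^{iN}$, and the exponential factor $e^{-c4^i}$ dominates any such polynomial growth, so the product is still $\le Ce^{-\tau 4^i}$ for a slightly smaller $\tau$. Likewise, in extracting the $L^2$-average of $f$ over $U_i$ from the $L^2$-average over a Gaussian-weighted region, one uses $(UE)$ to split $e^{-\frac t2\mathcal L}$ into dyadic shells and sums a geometric-type series, again absorbing polynomial-in-$2^i$ losses into the Gaussian tail. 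Collecting the finitely many terms $k=1,\dots,m$ from the binomial expansion and relabeling constants yields \eqref{key-good}.

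The main obstacle, and the place where the doubling setting genuinely departs from the Euclidean argument of \cite{acdh}, is making the transition from the \emph{global} hypothesis $(G_{p_0})$ to the \emph{local, off-diagonal} $L^2\to L^{p_0}$ estimate for $\nabla e^{-s\mathcal L}$ that the chaining argument needs; this is where one invokes $(UE)$ together with $(D_N)$ (and the composition $e^{-t\mathcal L}=e^{-\frac t2\mathcal L}e^{-\frac t2\mathcal L}$) to trade $L^{p_0}$-boundedness for Gaussian-localized $L^2$ estimates — precisely the device of \cite[Section 3]{acdh}, which I expect to reproduce with the dimension $n$ there replaced by $N$ throughout. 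Once that localized bound is in hand the rest is routine doubling and geometric-series estimates.
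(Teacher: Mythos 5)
Your proposal follows essentially the same route as the paper, which at this point simply cites \cite[Lemma 3.2]{acdh} and observes that the proof there uses only $(D_N)$, $(UE)$ and $(G_{p_0})$, not $(P_2)$; you reconstruct that argument via the binomial expansion of $A_{r_B}$, composition of the heat semigroup, and the passage from the global $(G_{p_0})$ bound and $L^2$ Davies--Gaffney/$(UE)$ estimates to $L^2$-$L^p$ off-diagonal Gaussian estimates for $\sqrt{t}\,|\nabla e^{-t\mathcal L}|$. One small correction to the write-up: the intermediate display you state with exponent $p_0$ on the left holds only for $p<p_0$ (the Riesz--Thorin/Stein interpolation between the $L^2$ Gaussian decay and the global $L^{p_0}$ bound loses decay at the endpoint), so one should obtain the $L^2\to L^p$ off-diagonal estimate directly rather than first claiming it at $p_0$ and then downgrading.
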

\begin{proof}
The lemma was proved in \cite[Lemma 3.2]{acdh}. Notice that, although in the statement of \cite[Lemma 3.2]{acdh},
 $(P_2)$ was assumed, its proof indeed only needs $(D_N)$, $(UE)$ and $(G_{p_0})$.
\end{proof}
%\begin{proof}
%For each $f\in L^2(M)\cap L^p(M)$, the action of ${\mathcal{L}}^{-1/2}$ on $f$ is given by
%\begin{eqnarray*}
%{\mathcal{L}}^{-1/2}f:= \int_0^{\infty}e^{-s{\mathcal{L}}}f\frac{\,ds}{\sqrt s}.
%\end{eqnarray*}
%For each $x\in M$ and $t>0$, it follows from $(UE)$ that
%\begin{eqnarray*}
%\int_0^{t}\left|e^{-s{\mathcal{L}}}f(x)\right|\frac{\,ds}{\sqrt s}&&\le  \int_0^{t}\int_M \frac{C}{V(x,{\sqrt s})}\exp\lf\{-\frac{d^2(x,y)}{cs}\r\}|f(y)|\,d\mu(y)\frac{\,ds}{\sqrt s}\\
%&&\le C\sqrt t\mathcal{M}(f)(x).
%\end{eqnarray*}
%On the other hand, it holds
%\begin{eqnarray*}
%\int_{t}^\infty\left|e^{-s{\mathcal{L}}}f(x)\right|\frac{\,ds}{\sqrt s}&&\le  \int_{t}^\infty\int_M \frac{C}{V(x,{\sqrt s})}\exp\lf\{-\frac{d^2(x,y)}{cs}\r\}|f(y)|\,d\mu(y)\frac{\,ds}{\sqrt s}\\
%&&\le \int_{t}^\infty\frac{C\|f\|_p}{V(x,{\sqrt s})}\left(\int_M \exp\lf\{-\frac{pd^2(x,y)}{cs(p-1)}\r\}\,d\mu(y)\right)^{1-1/p}\frac{\,ds}{\sqrt s}\\
%&&\le \int_{t}^\infty\frac{C\|f\|_p}{V(x,{\sqrt s})^{1/p}}\frac{\,ds}{\sqrt s}\\
%&&\le C\|f\|_pt^{\frac 12(1-\frac n{p})}.
%\end{eqnarray*}
%Letting $t^{n/(2p)}=\|f\|_p/\mathcal{M}f(x)$, we can conclude that
%$$|{\mathcal{L}}^{-1/2}f(x)|\le C\|f\|_p^{p/n}[\mathcal{M}f(x)]^{1-p/n},$$
%and hence $\|{\mathcal{L}}^{-1/2}\|_{p\to p^\ast}\le C$.
%\end{proof}

Recall that $x_M\in M_0$ is fixed, and we assume that $\mathrm{diam}(M_0)=1$.
The reverse doubling condition only requires that for all $1<r<R<\infty$ it holds
$$\left(\frac{R}{r}\right)^n\lesssim \frac{V(x_M,R)}{V(x_M,r)}. \leqno(RD_{n})$$

\begin{lem}\label{potential}
Assume that $(D_{N})$ and $(RD_n)$ hold on $M$  with $1<n\le N<\infty$, and that $(UE)$ holds.
Let $C_0>10$ be fixed.
Then for any $p\in (1,n)$, there exists $C>0$, depending only on $C_0,n,N$,
such that  for any ball $B$, with $r_B>1$ and $C_0B\cap M_0\neq \emptyset$, and any $f\in L^p(M)$, it holds
$$\left(\fint_{B}|\mathcal{L}^{-1/2}f|^p\,d\mu\right)^{1/p}\le \frac{Cr_B}{\mu(B)^{1/p}}\|f\|_p.$$
\end{lem}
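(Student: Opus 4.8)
The plan is to estimate $\mathcal{L}^{-1/2}f$ on the ball $B$ by splitting the integral defining $\mathcal{L}^{-1/2} = \frac{\sqrt\pi}{2}\int_0^\infty e^{-s\mathcal{L}}\,\frac{ds}{\sqrt s}$ into a "small-time" part $0<s<r_B^2$ and a "large-time" part $s\ge r_B^2$, and to further decompose $f = \sum_{i\ge 1} f\chi_{U_i}$ along the annuli $U_i = 2^{i+1}B\setminus 2^iB$ (with $U_1=4B$). Since only $L^p$ control of $f$ is assumed, I would work throughout with the $(UE)$ bound for $p_s(x,y)$ and the induced $L^p\to L^p$, and more importantly $L^p\to L^\infty$ / off-diagonal, bounds for $e^{-s\mathcal{L}}$; the doubling hypothesis $(D_N)$ lets one convert volume ratios $V(x,\sqrt s)/V(x,r_B)$ into powers of $\sqrt s/r_B$, and it is exactly here that $(RD_n)$ with $p<n$ enters, to make the resulting $s$-integral converge at infinity.

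The key steps, in order: (1) For the small-time piece $\int_0^{r_B^2} e^{-s\mathcal{L}}f\,\frac{ds}{\sqrt s}$, I would use the local nature of the Gaussian bound: for $x\in B$ the kernel $p_s(x,\cdot)$ is essentially supported in $B(x,C\sqrt s)\subset 2B$ up to exponentially small tails, so that $\|e^{-s\mathcal{L}}(f\chi_{2B})\|_{L^p(B)}\lesssim \|f\chi_{2B}\|_p$ uniformly, and the tails from $f\chi_{M\setminus 2B}$ contribute a convergent series in $i$ with factors $e^{-c4^ir_B^2/s}$. Integrating $\frac{ds}{\sqrt s}$ over $(0,r_B^2)$ produces the factor $r_B$, giving $\|\cdot\|_{L^p(B)}\lesssim r_B\|f\|_p$, i.e. the claimed bound after dividing by $\mu(B)^{1/p}$. (2) For the large-time piece, for $s\ge r_B^2$ I would use the $(UE)$-derived estimate $\|e^{-s\mathcal{L}}f\|_{L^\infty(B)}\lesssim \sup_{x\in B}V(x,\sqrt s)^{-1/p}\|f\|_p$ together with the fact that $C_0B\cap M_0\ne\emptyset$ and $\mathrm{diam}(M_0)=1$, which by the doubling/reverse-doubling machinery (see Remark following $(RD_n)$) gives $V(x,\sqrt s)\gtrsim V(x_M,\sqrt s)\gtrsim (\sqrt s/r_B)^n V(x_M,r_B)\gtrsim (\sqrt s / r_B)^n \mu(B)/C(C_0)$ once $s\ge r_B^2>1$. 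Hence $\|e^{-s\mathcal{L}}f\|_{L^\infty(B)}\lesssim (r_B^2/s)^{n/(2p)}\mu(B)^{-1/p}\|f\|_p$, and $\int_{r_B^2}^\infty (r_B^2/s)^{n/(2p)}\frac{ds}{\sqrt s}$ converges precisely because $p<n$, again yielding the factor $r_B$.

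The main obstacle I anticipate is step (2): producing the correct \emph{uniform-in-$x\in B$} lower bound for $V(x,\sqrt s)$ in terms of $\mu(B)$ and $\sqrt s/r_B$. This is where the hypotheses $r_B>1$, $C_0B\cap M_0\ne\emptyset$, and the normalization $\mathrm{diam}(M_0)=1$ are all used: one must pass from reverse doubling centered at $x_M$ to reverse doubling centered at (or a lower bound valid at) arbitrary $x\in B$, absorbing the dependence on $C_0$ into the constant. One also needs to be careful that the exponential tails in the annular decomposition of the small-time piece are genuinely summable against the $\frac{ds}{\sqrt s}$ weight uniformly in $B$; a standard estimate $\int_0^{r_B^2} s^{-1/2} e^{-c4^i r_B^2/s}\,ds \lesssim r_B\, e^{-c'4^i}$ handles this, and combined with the at-most-polynomial-in-$i$ growth of $\mu(2^iB)/\mu(B)$ from $(D_N)$ it gives a convergent sum. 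Aside from these points the argument is routine heat-kernel bookkeeping in the spirit of \cite{cd99,acdh}.
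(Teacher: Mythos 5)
Your proposal is essentially correct, and your large-time piece mirrors the paper's argument closely: both bound $e^{-s\mathcal{L}}f$ pointwise on $B$ via the Gaussian upper bound and H\"older, use $C_0B\cap M_0\neq\emptyset$ together with $\mathrm{diam}(M_0)=1$ and $(D_N)$ to equate $V(x,\sqrt{s})\sim V(x_M,\sqrt{s})$ for $x\in B$, $s\ge(2C_0 r_B)^2$, and then invoke $(RD_n)$ to get $V(x_M,\sqrt{s})^{-1/p}\lesssim (r_B/\sqrt{s})^{n/p}\,\mu(B)^{-1/p}$, winning convergence of the $s$-integral from $p<n$. However, your treatment of the small-time piece is considerably more involved than the paper's and than is needed. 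The paper simply applies Minkowski's inequality and the $L^p$-contractivity of the heat semigroup:
\begin{equation*}
\Bigl(\int_{B}|I_1|^p\,d\mu\Bigr)^{1/p}\le C\int_0^{(2C_0r_B)^2}\|e^{-s\mathcal{L}}f\|_p\,\frac{ds}{\sqrt s}\le C\int_0^{(2C_0r_B)^2}\|f\|_p\,\frac{ds}{\sqrt s}\le C r_B\|f\|_p,
\end{equation*}
which already gives the required estimate (indeed with $L^p(M)$ on the left, not merely $L^p(B)$). Your annular decomposition of $f$ into $f\chi_{U_i}$, the off-diagonal tails $e^{-c\,4^i r_B^2/s}$, and the summability discussion are all unnecessary here: for $s\le (Cr_B)^2$ the crude global $L^p\to L^p$ bound suffices, and no locality of $p_s$ is used. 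Both routes work; the paper's is cleaner because the only place anything beyond contractivity is required is $s\gtrsim r_B^2$, where reverse doubling and $p<n$ carry the argument.
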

\begin{proof}
For each $x\in B$, write
\begin{eqnarray*}
|{\mathcal{L}}^{-1/2}f(x)|&&\le \frac{\sqrt{\pi}}{2}\int_0^{(2C_0r_B)^2}\left|e^{-s{\mathcal{L}}}f(x)\right|\frac{\,ds}{\sqrt s}+\frac{\sqrt{\pi}}{2}\int_{(2C_0r_B)^2}^\infty\left|e^{-s{\mathcal{L}}}f(x)\right|\frac{\,ds}{\sqrt s}=:I_1+I_2.
\end{eqnarray*}
For the term $I_1$, one has via the Minkowski inequality that
$$\left(\int_{B}|I_1|^p\,d\mu\right)^{1/p}\le C\int_0^{(2C_0r_B)^2}\|e^{-s{\mathcal{L}}}f\|_p\frac{\,ds}{\sqrt s}\le  C\int_0^{(2C_0r_B)^2}\|f\|_p\frac{\,ds}{\sqrt s}\le Cr_B\|f\|_p.$$
For the term $I_2$, notice that by $(D_{N})$, the assumptions $C_0B\cap M_0\neq\emptyset$ and $\diam(M_0)=1$, it holds
$$V(x_B,\sqrt{t})\sim V(x_M,\sqrt t)\sim V(x,\sqrt t)$$
for any $x\in B$ and $t\ge (2C_0r_B)^2$.
From this together with $(UE)$, $(RD_n)$ and the H\"older inequality, we deduce that for each $x\in B$
\begin{eqnarray*}
|I_2|&&\le  \int_{(2C_0r_B)^2}^\infty\int_M \frac{C}{V(x,\sqrt {s})}\exp\lf\{-\frac{d^2(x,y)}{cs}\r\}|f(y)|\,d\mu(y)\frac{\,ds}{\sqrt s}\\
&&\le C\int_{(2C_0r_B)^2}^\infty \|f\|_p\left(\int_M \frac{1}{V(x,\sqrt {s})^{p/(p-1)}}\exp\lf\{-\frac{pd^2(x,y)}{c(p-1)s}\r\}\,d\mu(y)\right)^{(p-1)/p}\frac{\,ds}{\sqrt s}\\
&&\le C\|f\|_p\int_{(2C_0r_B)^2}^\infty  \frac{C}{V(x_M,\sqrt {s})^{1/p}}\frac{\,ds}{\sqrt s}\\
&&\le C\|f\|_p\int_{(2C_0r_B)^2}^\infty  \frac{Cr_B^{n/p}}{V(x_M,2C_0r_B)^{1/p}s^{n/(2p)}}\frac{\,ds}{\sqrt s} \le \frac{Cr_B}{\mu(B)^{1/p}}\|f\|_p,
\end{eqnarray*}
where in the last inequality we used the fact that $p<n$.
Combining the estimates of $I_1$ and $I_2$, we conclude that
\begin{eqnarray*}
\left(\fint_{B}|\mathcal{L}^{-1/2}f|^p\,d\mu\right)^{1/p}&&\le  \left(\fint_{B}|I_1+I_2|^p\,d\mu\right)^{1/p}\le \frac{Cr_B}{\mu(B)^{1/p}}\|f\|_p,
\end{eqnarray*}
as desired.
\end{proof}

Using the the previous mapping property of the Riesz potentials  together with  Lemma \ref{key-lemma}, we deduce the following estimates.

\begin{prop}\label{key-estimate}
Assume that $(D_{N})$ and $(RD_n)$ hold on $M$  with $2<n\le N<\infty$,  and that $(UE)$ and $(P^E_2)$ hold.
 Suppose that $(G_{p_0})$ for some $p_0\in (2,n)$ holds. Let $p\in (2,p_0)$ and $q\in (2,n)$.
Let $10<C_0,\alpha,\beta<\infty$. Then for each $B=B(x_B,r_B)\subset M$ and each $f\in C^\infty_c(M)$, the followings hold.

(i) If ${r_B}<\alpha$, then there exists $C_1=C_1(n,N,p,p_0,\alpha)$ such that it holds
\begin{equation}
\left(\fint_{B}|TA_{r_B}f|^p\,d\mu\right)^{1/p}\le C_{1}\inf_{y\in B}\mathcal{M}_2(|Tf|)(y).
\end{equation}

(ii) If ${r_B}\ge \beta$ and $d(x_B,x_M)<C_0{r_B}$, then  there exists $C_2=C_2(n,N,p,q,p_0,C_0,\beta)$ such that it holds
\begin{eqnarray}\label{good-lambda-large-1}
\left(\fint_{B}|TA_{r_B}f|^p\,d\mu\right)^{1/p}\le  \frac{C_2}{\mu(B)^{1/q}}\|f\|_q.
\end{eqnarray}

(iii) If ${r_B}\ge \beta$ and $d(x_B,x_{M})\ge C_0{r_B}$, then there exist $C_3,\,C_4$, depending on $n,N,p,q,p_0,C_0,\beta$, such that  it holds
\begin{eqnarray}\label{good-lambda-large-2}
\left(\fint_{B}|TA_{r_B}f|^p\,d\mu\right)^{1/p}&&\le \frac{C_3\|f\|_q}{V(x_M, d(x_B,x_M)+1)^{1/q}}+C_{4}\inf_{y\in B}\mathcal{M}_2(|Tf|)(y).
\end{eqnarray}
\end{prop}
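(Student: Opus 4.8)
I would reduce everything to one annular estimate and then split into the three geometric regimes. Since $T=\nabla\mathcal{L}^{-1/2}$ and $\mathcal{L}^{-1/2}$ commutes with $A_{r_B}$ in the functional calculus of $\mathcal{L}$, write $TA_{r_B}f=\nabla A_{r_B}h$ with $h:=\mathcal{L}^{-1/2}f$ (well defined since $n>2$), so $\nabla h=Tf$. Stochastic completeness, forced by the doubling volume growth, makes $A_{r_B}$ fix constants, hence $\nabla A_{r_B}h=\nabla A_{r_B}(h-c)$ for any constant $c$. I then decompose $h-c=\sum_{i\ge1}(h-c)\chi_{U_i}$ with $U_1=4B$ and $U_i=2^{i+1}B\setminus2^iB$ for $i\ge2$, apply Lemma~\ref{key-lemma} to each summand (admissible because $(G_{p_0})$ holds with $2<p<p_0<n$), and sum by Minkowski's inequality in $L^p(B,d\mu/\mu(B))$, which also settles convergence of the series. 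This reduces the proposition to estimating
\begin{equation*}
\left(\fint_B|TA_{r_B}f|^p\,d\mu\right)^{1/p}\le C\sum_{i\ge1}\frac{e^{-\tau 4^i}}{r_B}\,\Xi_i,\qquad \Xi_i:=\left(\frac1{\mu(2^iB)}\int_{U_i}|h-c|^2\,d\mu\right)^{1/2}\le C\left(\fint_{2^{i+1}B}|h-c|^2\,d\mu\right)^{1/2},
\end{equation*}
the crucial point being that the doubly exponential weight $e^{-\tau 4^i}$ will absorb every polynomial loss incurred in bounding $\Xi_i$.

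\textbf{Cases (i) and (ii).} For (i) I would take $c=h_{4B}$ and telescope between $4B$ and $2^{i+1}B$ along $\{2^jB\}$, estimating each increment by $(P_{2,\loc})$ (Lemma~\ref{pend-ploc}) on balls of radius $\le1$ and by $(P_G)$ (Proposition~\ref{exp-poincare}) on balls of radius $>1$; both hold on \emph{every} ball, so the argument never meets $M_0$, giving $\Xi_i\le C\big(1+(2^ir_B)^{N_\mu+(1+N)/2}\big)\inf_{y\in B}\mathcal{M}_2(|Tf|)(y)$. Inserting this, the factor $1/r_B$ does no harm: positive powers of $r_B$ are controlled by $r_B<\alpha$, negative powers only occur on the range $2^{i+1}r_B\ge1$, and $e^{-\tau 4^i}$ makes the sum converge to $C_1(n,N,p,p_0,\alpha)\inf_{y\in B}\mathcal{M}_2(|Tf|)(y)$. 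For (ii) I would take $c=0$: every $2^{i+1}B$ has radius $>\beta>1$ and $C_0\cdot2^{i+1}B\cap M_0\ne\emptyset$ (since $d(x_B,x_M)<C_0r_B$), so Lemma~\ref{potential} with exponent $q$ and H\"older give $\Xi_i\le C\,2^ir_B\,\mu(2^{i+1}B)^{-1/q}\|f\|_q$; as $\mu(2^{i+1}B)\gtrsim2^{in}\mu(B)$ by $(D_N)$, $(RD_n)$ and $d(x_B,x_M)<C_0r_B$, this is $\le C\,r_B2^{i(1-n/q)}\mu(B)^{-1/q}\|f\|_q$, and the series sums to $C_2\mu(B)^{-1/q}\|f\|_q$.

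\textbf{Case (iii).} Set $\rho:=d(x_B,x_M)+1\ge C_0r_B$ and let $i_*$ be the least index with $C_0\cdot2^{i_*+1}B\cap M_0\ne\emptyset$; then $2^{i_*}r_B\sim\rho$ and, after adjusting $i_*$ by a bounded additive constant, the balls $2^{i+1}B$ with $i<i_*$ stay away from a fixed neighbourhood of $M_0$ while those with $i\ge i_*$ already contain $M_0$. I would take $c=h_{4B}$. For $i<i_*$, telescoping with the scale-invariant $(P^E_2)$ yields $\Xi_i\le C\,2^ir_B\inf_{y\in B}\mathcal{M}_2(|Tf|)(y)$, and $\sum_ie^{-\tau 4^i}2^i<\infty$ gives the $C_4\inf_B\mathcal{M}_2(|Tf|)$ term. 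For $i\ge i_*$, write $|h-h_{4B}|\le|h|+|h_{4B}|$: the $|h|$-average over $U_i$ is controlled by Lemma~\ref{potential} with exponent $q$ together with $\mu(2^{i+1}B)\gtrsim V(x_M,\rho)(2^ir_B/\rho)^n$ (from $(RD_n)$ centred at $x_M$), while $|h_{4B}|$ is estimated by telescoping from $4B$ up to the first ball containing $M_0$, using $(P^E_2)$ for increments across the ends (producing $\mathcal{M}_2(|Tf|)$ terms) and Lemma~\ref{potential} for increments across balls meeting $M_0$ (producing $V(x_M,\rho)^{-1/q}\|f\|_q$ terms). Each piece produced in this range carries an extra factor $\rho/r_B\sim2^{i_*}$ relative to the target bound, but, since such pieces occur only for $i\ge i_*$ and $2^{i_*}e^{-\tau 4^{i_*}}\le C$ uniformly, multiplying by $e^{-\tau 4^i}/r_B$ and summing absorbs it, yielding the asserted $\frac{C_3\|f\|_q}{V(x_M,d(x_B,x_M)+1)^{1/q}}+C_4\inf_{y\in B}\mathcal{M}_2(|Tf|)(y)$.

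\textbf{Where the difficulty lies.} Assembling the annular estimate and Cases (i), (ii) is essentially bookkeeping in the spirit of \cite{acdh}. The delicate step is Case (iii): one must cut the dyadic scales exactly at the radius $\sim\rho$ where the balls begin to swallow $M_0$, match each scale with the only Poincar\'e-type input available there ($(P^E_2)$ on the ends versus Lemma~\ref{potential} once $M_0$ is engulfed), handle the boundedly many transitional scales, and — the real subtlety — realize that the contributions anchored to a ball of radius $\sim\rho$ unavoidably acquire a factor $\rho\gg r_B$ that can only be killed by the super-exponential decay $e^{-\tau 4^i}$ of Lemma~\ref{key-lemma}. The same decay, together with the restriction $2^{i+1}r_B\ge1$ on the range where $(P_G)$ is invoked, is what keeps the constant in Case (i) depending on $\alpha$ alone.
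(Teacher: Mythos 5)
Your proposal is correct and follows essentially the same strategy as the paper: reduce to a single annular estimate via Lemma~\ref{key-lemma}, then match each dyadic scale with the appropriate Poincar\'e-type input (local/exponential Poincar\'e for (i), the Riesz-potential bound of Lemma~\ref{potential} for (ii), and a split between $(P^E_2)$ on scales inside the end and Lemma~\ref{potential} once $M_0$ is engulfed for (iii)), with the super-exponential weight $e^{-\tau 4^i}$ absorbing all polynomial losses. Two cosmetic divergences from the paper's proof: in (ii) you invoke $(RD_n)$ to lower-bound $\mu(2^{i+1}B)$, whereas the paper simply uses $\mu(2^iB)\ge\mu(B)$ and lets $e^{-\tau 4^i}$ dominate $2^i$; and in (iii) you control $|h_{4B}|$ by a further telescoping chain up to the scale $\sim\rho$, while the paper enlarges directly to $\big(\tfrac1{\mu(B)}\int_{2^{k+1}B}|g|^q\,d\mu\big)^{1/q}$ and applies Lemma~\ref{potential} once. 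Both variants land on the same bound, and your observation that the transitional factor $\rho/r_B\sim 2^{i_\ast}$ is killed by $e^{-\tau 4^{i_\ast}}$ is precisely the mechanism the paper uses.
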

\begin{proof}  Let $g:=\mathcal{L}^{-1/2}f$ for $f\in C^\infty_c(M)$. Let $U_i =2^{i+1}B\setminus 2^iB$, $i\ge 2$, and $U_1 =4B$.

(i) The case ${r_B}<\alpha$ follows from a proof similar to  \cite[p. 935]{acdh}, we provide a proof for completeness.
By Lemma  \ref{key-good} one has
\begin{eqnarray*}
\left(\fint_{B}|TA_{r_B}f|^p\,d\mu\right)^{1/p}&&\le \sum_{i\ge 1}\left(\fint_{B}|\nabla A_{r_B} [(g-g_{4B})\chi_{U_i}]|^p\,d\mu\right)^{1/p}\\
&&\le \sum_{i\ge 1} \frac{Ce^{-\tau 4^i}}{{r_B}}\left(\frac{1}{\mu(2^iB)}\int_{U_i}|g-g_{4B}|^2\,d\mu\right)^{1/2}.
\end{eqnarray*}

By using $(P_{2,\loc})$ from Lemma \ref{pend-ploc}
and  $(P_G)$ from Proposition \ref{exp-poincare}, one finds
\begin{eqnarray*}
\left(\fint_{B}|TA_{r_B}f|^p\,d\mu\right)^{1/p}&&\le \sum_{i\ge 1} \frac{Ce^{-\tau 4^i}}{{r_B}}\left(\frac{1}{\mu(2^iB)}\int_{U_i}|g-g_{4B}|^2\,d\mu\right)^{1/2}\\
&&\le \sum_{i\ge 1}\frac{Ce^{-\tau 4^i}}{{r_B}}\left[\left(\fint_{2^{i+1}B}|g-g_{2^{i+1}B}|^2\,d\mu\right)^{1/2}+
\sum_{j=2}^i|g_{2^jB}-g_{2^{j+1}B}|\right]\\
&&\le \sum_{i\ge 1}\frac{Ce^{-\tau 4^i}}{{r_B}}\sum_{j=2}^{i+1} (2^{j+1}{r_B}) [(2^{j+1}{r_B})\vee 1]^{N_\mu+\frac{N}{2}}\left(\fint_{2^{j+1}B}|\nabla g|^2\,d\mu\right)^{1/2}\\
&&\overset{{r_B}<\alpha}{\leq} \sum_{i\ge 1}Ce^{-\tau 4^i}\sum_{j=1}^{i+1} 2^{j(N_\mu+1+\frac{N}{2})}\inf_{y\in B}\mathcal{M}_2(|\nabla g|)(y)\\
&&\le C_1\inf_{y\in B}\mathcal{M}_2(|Tf|)(y).
\end{eqnarray*}

(ii) Suppose now $d(x_B,x_M)<C_0r_B$ and ${r_B}\ge \beta$.
By Lemma \ref{key-lemma}, Lemma \ref{potential} together with  the H\"older inequality, one has
\begin{eqnarray*}
\left(\fint_{B}|TA_{r_B}f|^p\,d\mu\right)^{1/p}&&\le \sum_{i\ge 1}\left(\fint_{B}|\nabla A_{r_B} (g\chi_{U_i})|^p\,d\mu\right)^{1/p}\le \sum_{i\ge 1} \frac{Ce^{-\tau 4^i}}{{r_B}}\left(\frac{1}{\mu(2^iB)}\int_{U_i}|g|^{2}\,d\mu\right)^{1/2}\\
&&\le \sum_{i\ge 1} \frac{Ce^{-\tau 4^i}}{{r_B}}\left(\frac{1}{\mu(2^iB)}\int_{U_i}|g|^{q}\,d\mu\right)^{1/q}\\
&&\le \sum_{i\ge 1} \frac{Ce^{-\tau 4^i}2^ir_B}{r_B\mu(2^iB)^{1/q}} \|f\|_q\le \frac{C}{\mu(B)^{1/q}} \|f\|_q.
\end{eqnarray*}

%If $n> N$, then by using the fact ${r_B}\ge \beta>10$ together with Lemma \ref{riesz-potential} (ii), we can conclude that
%for each $l=1,\cdots,m$,
%\begin{eqnarray*}
%\left(\fint_{B}|Te^{-l{r_B}^2\mathcal{L}}f|^p\,d\mu\right)^{1/p}&&\le \sum_{i\ge 1}\left(\fint_{B}|\nabla e^{-(l{r_B}^2-1)\mathcal{L}} (e^{-\mathcal{L}}\mathcal{L}^{-1/2}f\chi_{C_i})|^p\,d\mu\right)^{1/p}\\
%&&\le \sum_{i\ge 1} \frac{Ce^{-\tau 4^i}}{{r_B}}\left(\frac{1}{\mu(2^iB)}\int_{C_i}|e^{-\mathcal{L}}\mathcal{L}^{-1/2}f|^{2}\,d\mu\right)^{1/2}\\
%&&\le \sum_{i\ge 1} \frac{Ce^{-\tau 4^i}}{{r_B}}\left(\frac{1}{\mu(2^iB)}\int_{C_i}|e^{-\mathcal{L}}\mathcal{L}^{-1/2}f|^{q^\ast}\,d\mu\right)^{1/q^\ast}\\
%&&\le \sum_{i\ge 1} \frac{Ce^{-\tau 4^i}}{{r_B}^{1+N/q^\ast}2^{iN/q^\ast}} \|f\|_q\le \frac{C}{{r_B}^{N/q}} \|f\|_q.
%\end{eqnarray*}
%This is enough to deduce our claim.

(iii)  If  $d(x_B,x_{M})\ge C_0{r_B}$ and ${r_B}\ge \beta$, then the ball $B$ is included in one end.
Let $k\in\cn$ such that $2^{k+1}{r_B}\le d(x_B,x_M)<2^{k+2}{r_B}$ (recall that $C_0>10$).

By Lemma  \ref{key-good} again one has
\begin{eqnarray*}
\left(\fint_{B}|TA_{r_B}f|^p\,d\mu\right)^{1/p}&&\le \sum_{i\ge 1}\left(\fint_{B}|\nabla A_{r_B} [(g-g_{4B})\chi_{U_i}]|^p\,d\mu\right)^{1/p}\\
&&\le \sum_{i= 1}^k \frac{Ce^{-\tau 4^i}}{{r_B}}\left(\frac{1}{\mu(2^iB)}\int_{U_i}|g-g_{4B}|^2\,d\mu\right)^{1/2}+\sum_{i>k}\cdots\\
&&=: I_1+I_2.
\end{eqnarray*}

%Notice that for a.e. $x\in B$,
%\begin{eqnarray*}
%|Te^{-r^2\mathcal{L}}f(x)|&&=|\nabla e^{-r^2\mathcal{L}}(g-g_B)(x)|\\
%&& \le  \int_{G} \frac{C}{rV(x,r)}\exp\left\{-\frac{d(x,y)^2}{cr^2}\right\}|g-g_B|\,d\mu(y)\nonumber\\
%&&\quad+\int_{F} \frac{C}{rV(x,r)}\exp\left\{-\frac{d(x,y)^2}{cr^2}\right\}|g-g_B|\,d\mu(y)\\
%&&=: I_1+I_2.
%\end{eqnarray*}
Since $d(x_B,x_M)<2^{k+2}{r_B}$,  for each $i>k$, $2^{i+1}B\cap M_0\neq \emptyset$. By Lemma \ref{potential},
$q<n$, $(D_N)$ and $(RD_n)$, we obtain
\begin{eqnarray*}
I_2&&\le \sum_{i>k}\frac{Ce^{-\tau 4^i}}{{r_B}}\left(\frac{1}{\mu(2^iB)}\int_{U_i}|g-g_{4B}|^2\,d\mu\right)^{1/2}\\
&&\le \sum_{i>k}\frac{Ce^{-\tau 4^i}}{{r_B}}\left[|g|_{4B}+\left(\frac{1}{\mu(2^iB)}\int_{U_i}|g|^2\,d\mu\right)^{1/2}\right]\\
&&\le  \sum_{i>k}\frac{Ce^{-\tau 4^i}}{{r_B}}\left[\left(\frac{1}{\mu(B)}\int_{2^{k+1}B}|g|^{q}\,d\mu\right)^{1/q}+
\left(\frac{1}{\mu(2^iB)}\int_{U_i}|g|^{q}\,d\mu\right)^{1/q}\right]\\
&&\le  \sum_{i>k}Ce^{-\tau 4^i} \left[\frac {2^k}{\mu(B)^{1/q}}\|f\|_q+
\frac{2^i}{\mu(2^iB)^{1/q}}\|f\|_q\right]\\
&&\le  \sum_{i>k}C\|f\|_qe^{-\tau 4^i} \left[\frac {2^{k+kN/q}}{\mu(2^kB)^{1/q}}+
\frac{2^{i}}{V(x_M,2^{i}r_B)^{1/q}}\right]\\
&&\le  \sum_{i>k}C\|f\|_qe^{-\tau 4^i} \left[\frac {2^{k+kN/q}}{V(x_M,2^{k}r_B)^{1/q}}+
\frac{2^{k+(i-k)(1-n/q)}}{V(x_M,2^{k}r_B)^{1/q}}\right]\\
&&\le \frac{Ce^{-c\tau 4^k}2^{k+kN/q}}{V(x_M,d(x_B,x_M)+1)^{1/q}}\|f\|_q\\
&&\le \frac{C_3}{V(x_M,d(x_B,x_M)+1)^{1/q}}\|f\|_q.
\end{eqnarray*}
Using $(P^E_2)$, we can estimate the term $I_1$  as
\begin{eqnarray*}
I_1&&=\sum_{i=1}^k \frac{Ce^{-\tau 4^i}}{{r_B}}\left(\frac{1}{\mu(2^iB)}\int_{U_i}|g-g_{4B}|^2\,d\mu\right)^{1/2}\\
&&\le \sum_{i=1}^k \frac{Ce^{-\tau 4^i}}{{r_B}}\left[\left(\fint_{2^{i+1}B}|g-g_{2^{i+1}B}|^2\,d\mu\right)^{1/2}+
\sum_{j=2}^i|g_{2^jB}-g_{2^{j+1}B}|\right]\\
&&\le \sum_{i=1}^k \frac{Ce^{-\tau 4^i}}{{r_B}}\sum_{j=2}^{i+1} (2^j{r_B}) \left(\fint_{2^jB}|\nabla g|^2\,d\mu\right)^{1/2}\\
&&\le C_4\inf_{y\in B}\mathcal{M}_2(|Tf|)(y).
\end{eqnarray*}
%Let $U_0:=B$, and for each $1\le i\le k$ let $U_i:=2^iB\setminus 2^{i-1}B$.
%To estimate $I_1$, by using the Poincar\'e inequality $(P^E_2)$ and the fact $2^{k+1}B\cap M_0=\emptyset$, we can conclude that
%\begin{eqnarray*}
%I_1&&= \int_{G} \frac{C}{rV(x,r)}\exp\left\{-\frac{d(x,y)^2}{cr^2}\right\}|g-g_B|\,d\mu(y)\\
%&&\le \sum_{i= 0}^k \frac{C}{rV(x,r)}\int_{U_i}\exp\left\{-\frac{d(x,y)^2}{cr^2}\right\}|g-g_B|\,d\mu\\
%&&\le \sum_{i= 0}^k \frac{C}{rV(x,r)}\exp\left\{-c2^{2i}\right\}\int_{2^iB} |g-g_{2^iB}|+\sum_{j=1}^i|g_{2^jB}-g_{2^{j-1}B}|\,d\mu\\
%&&\le \sum_{i=0}^k\frac{C}{rV(x,r)}\exp\left\{-c2^{2i}\right\}\sum_{j=1}^i (2^jr)V(x,2^jr)\left(\fint_{2^jB}|\nabla g|^2\,d\mu\right)^{1/2}\\
%&&\le  \sum_{i=0}^k\frac{C}{V(x,r)}\exp\left\{-c2^{2i}\right\}\sum_{j=1}^i (2^j)^{N+1}V(x,r)\inf_{y\in B}\mathcal{M}_2(|\nabla g|)(y)\\
%&&\le C\inf_{y\in B}\mathcal{M}_2(|Tf|)(y).
%\end{eqnarray*}
Using the estimates of $I_1$ and $I_2$, one can finally conclude that
\begin{eqnarray*}
\left(\fint_{B}|TA_{r_B}f|^p\,d\mu\right)^{1/p}&&\le  \frac{C_3}{V(x_M,d(x_B,x_M)+1)^{1/q}}\|f\|_q+C_{4}\inf_{x\in B}\mathcal{M}_2(|Tf|)(x),
\end{eqnarray*}
as desired.
\end{proof}

Using Proposition \ref{key-estimate} and adapting the argument from \cite{acdh},
we are able to provide a modified good-$\lambda$ inequality. The key ingredient is that for large balls,
Proposition \ref{key-estimate} allows us to deduce a small error term in the good-$\lambda$ inequality; see Proposition \ref{large-ball} below.

\begin{prop}\label{small-ball}
Assume that $(D_{N})$ and $(RD_n)$ hold on $M$  with $2<n\le N<\infty$,  and that $(UE)$ and $(P^E_2)$ hold.
 Assume that $(G_{p_0})$ holds for some $p_0\in (2,n)$ . Let $\alpha>10$  and $2<q<p_0$.
There exist $K_0,C>0$ only depending on $n,N,\alpha,q,p_0$, such that for
each $f\in C^\infty_c(M)$,  every $\lambda>0$, $K>K_0$ and $\gamma>0$,
and every ball $B_0=B(x_B,r_B)$, ${r_B}<\alpha$, if there exists $x_0\in B_0$ such that
 $\mathcal{M}_2(|Tf|)(x_0)\le \lambda$, then it holds
\begin{equation}
\mu\left(\left\{x\in B_0:\,\mathcal{M}_2(|Tf|)(x)>K\lambda,\, \mathcal{M}^\#_{T,A}f(x)\le \gamma \lambda\right\}\right)\le C({\gamma^2}+K^{-q})\mu(B_0).
\end{equation}
\end{prop}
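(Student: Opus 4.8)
The plan is to follow the classical local good-$\lambda$ machinery from \cite[Section 3]{acdh}, now localized to balls of radius ${r_B}<\alpha$ so that Proposition \ref{key-estimate}(i) applies directly. First I would dispose of the trivial case: if $B_0$ contains no point where $\mathcal{M}_2(|Tf|)\le \lambda$ then there is nothing to prove, so assume $x_0\in B_0$ with $\mathcal{M}_2(|Tf|)(x_0)\le\lambda$, and likewise assume the set $\{x\in B_0:\mathcal{M}^\#_{T,A}f(x)\le\gamma\lambda\}$ is nonempty, picking a point $x_1$ in it. The standard observation is that $\mathcal{M}_2(|Tf|)$ can be split into a contribution from a fixed dilate $C\alpha B_0$ and a contribution from the complement; the latter is controlled by $\mathcal{M}_2(|Tf|)(x_0)\le\lambda$ up to a dimensional constant (using $(D_N)$ and that ${r_B}<\alpha$ so all the relevant dilates have comparable measure), so on $B_0$ one has $\mathcal{M}_2(|Tf|)(x)\le C\mathcal{M}_2(|Tf\,\chi_{C\alpha B_0}|)(x)+C\lambda$. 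Hence for $K>K_0$ large, $\{x\in B_0:\mathcal{M}_2(|Tf|)(x)>K\lambda\}\subset\{x\in B_0:\mathcal{M}_2(|Tf\,\chi_{C\alpha B_0}|)(x)>cK\lambda\}$.

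Next I would insert the operator $A_{r_B}$: write $Tf = T(I-A_{r_{B_0}})f + TA_{r_{B_0}}f$ on $B_0$, where here $A_{r_{B_0}}$ is taken at scale $r_{B_0}$. For the first piece, Lemma \ref{sharp-maximal} gives $\bigl(\fint_{B_0}|T(I-A_{r_{B_0}})f|^2\,d\mu\bigr)^{1/2}\le \mathcal{M}^\#_{T,A}f(x_1)\le\gamma\lambda$, so its local $L^2$ average is tiny; by Chebyshev this piece contributes at most $C(\gamma\lambda)^2/(K\lambda)^2\cdot\mu(B_0)=C\gamma^2 K^{-2}\mu(B_0)\le C\gamma^2\mu(B_0)$ to the measure of the bad set (the weak-$(2,2)$ bound for $\mathcal{M}_2$ of the $\chi_{C\alpha B_0}$-truncated function, which is just maximal function theory, is what one actually uses, together with $(D_N)$). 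For the second piece $TA_{r_{B_0}}f$, Proposition \ref{key-estimate}(i) — valid precisely because ${r_B}<\alpha$, and this is where $(UE)$, $(P^E_2)$ and $(G_{p_0})$ enter through that proposition — yields $\bigl(\fint_{B_0}|TA_{r_{B_0}}f|^p\,d\mu\bigr)^{1/p}\le C_1\inf_{y\in B_0}\mathcal{M}_2(|Tf|)(y)\le C_1\lambda$ using the point $x_0$. Then Chebyshev at exponent $p$ (or $q<p_0$, choosing $p\in(q,p_0)$) gives $\mu(\{x\in B_0:|TA_{r_{B_0}}f(x)|>cK\lambda\})\le C(C_1\lambda)^p/(K\lambda)^p\cdot\mu(B_0)=CK^{-p}\mu(B_0)\le CK^{-q}\mu(B_0)$.

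Adding the two contributions gives the claimed bound $C(\gamma^2+K^{-q})\mu(B_0)$, with $K_0$ and $C$ depending only on $n,N,\alpha,q,p_0$ as required. The main obstacle I anticipate is bookkeeping rather than conceptual: one must be careful that all the dilated balls $C\alpha B_0$ appearing in the truncation and in the weak-type estimates still have radius comparable to ${r_B}<\alpha$ so that $(D_N)$ controls their measures by $C\mu(B_0)$ with constants depending only on $\alpha$ (and $n,N$), and one must match the scale of $A_{r_{B_0}}$ in Lemma \ref{sharp-maximal} and Proposition \ref{key-estimate}(i) to the radius of $B_0$ exactly as in \cite[Lemma 3.1, Lemma 3.2]{acdh}. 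A secondary subtlety is ensuring the exponent used in the final Chebyshev step is genuinely $>q$ while still $<p_0$, which is possible since $q<p_0$; everything else is a faithful transcription of the argument on pp.\ 934--936 of \cite{acdh}, with $(P^E_2)$ replacing $(P_2)$ only through the already-established Proposition \ref{key-estimate}.
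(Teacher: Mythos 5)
Your proposal is correct and follows the paper's route: the paper disposes of this proposition by invoking \cite[Lemma 2.2]{acdh} once Proposition \ref{key-estimate}(i) is available, i.e.\ it runs exactly the local good-$\lambda$ argument you sketch (equivalently, the proof of Proposition \ref{large-ball} with part (i) in place of parts (ii)--(iii)). One bookkeeping remark: the final Chebyshev step should be applied to $\mathcal{M}_2(|TA_{3r_B}f|\chi_{3B_0})$ (using the $(q,q)$-boundedness of $\mathcal{M}_2$ and applying Proposition \ref{key-estimate}(i) on the ball $3B_0$, radius still $<3\alpha$) rather than to $|TA_{r_{B_0}}f|$ directly, since the bad set is defined by the maximal function of $Tf$, but this is exactly the transcription of the ACDH argument that you already reference.
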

\begin{proof}
By using (i) of Proposition \ref{key-estimate}, the conclusion follows from \cite[Lemma 2.2]{acdh}.
\end{proof}

Recall again that we fix $x_M\in M_0$ and assume $\mathrm{diam}(M_0)=1$.
\begin{prop}\label{large-ball}
Assume that $(D_{N})$ and $(RD_n)$ hold on $M$  with $2<n\le N<\infty$,  and that $(UE)$ and $(P^E_2)$ hold.
Suppose that $(G_{p_0})$ holds for some $p_0\in (2,n)$. Let $\beta>10$ and $2<p<q<p_0$.
There exist $K_0>1$ and $C,C_E>0$ only depending on $n,N,p,q,p_0,\beta$, such that for every
$f\in C^\infty_c(M)$, every $\lambda>0$, $K>K_0$ and $\gamma>0$,
and every ball $B_0=B(x_B,r_B)$, $r_B>\beta$, if there exists $x_0\in B_0$ such that $\mathcal{M}_2(|Tf|)(x_0)\le \lambda$, then it holds
\begin{eqnarray}\label{good-lambda-large-conclusion}
&&\mu\left(\left\{x\in B_0:\, \mathcal{M}_2(|Tf|)(x)>K\lambda,\, \mathcal{M}^\#_{T,A}f(x)\le \gamma \lambda,\, \frac{\|f\|_p}{V(x_M,d(x,x_M)+1)^{1/p}}\le C_E\lambda\right\}\right)\nonumber\\
&&\quad\quad \le C\left({\gamma^2+K^{-q}}\right)\mu(B_0).
\end{eqnarray}
\end{prop}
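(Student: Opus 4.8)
The plan is to mimic the good-$\lambda$ argument of \cite[Lemma 2.2]{acdh}, but to exploit the refined estimate in Proposition \ref{key-estimate}(ii)--(iii) to gain the extra small term involving $V(x_M,d(x,x_M)+1)^{-1/p}\|f\|_p$ that appears in the level set. First I would dispose of the trivial case: if no point $x\in B_0$ satisfies all three conditions in the level set, there is nothing to prove, so assume such a point exists; in particular $\mathcal M^\#_{T,A}f\le\gamma\lambda$ somewhere on $B_0$ and the factor $\|f\|_p V(x_M,d(x,x_M)+1)^{-1/p}$ is comparable on $B_0$ (using $(D_N)$ and $r_B>\beta$) to its value at $x_0$, hence $\le C_E'\lambda$ on $B_0$ for a suitable $C_E$. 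Next, write $Tf = TA_{r_{B_0}}f + T(I-A_{r_{B_0}})f$. The second piece is controlled pointwise by $\mathcal M_2\big(T(I-A_{r_{B_0}})f\big)$, which by Lemma \ref{sharp-maximal} and the weak $(2,2)$ bound of $\mathcal M_2$ contributes, at level $K\lambda$, a term $\lesssim (\gamma/K)^2\mu(B_0)\le\gamma^2\mu(B_0)$ after using $\mathcal M^\#_{T,A}f(x_0)\le\gamma\lambda$. So the whole issue is the contribution of $TA_{r_{B_0}}f$.

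For that term I would distinguish according to whether $d(x_B,x_M)< C_0 r_B$ or $\ge C_0 r_B$, applying Proposition \ref{key-estimate}(ii) in the first case and (iii) in the second. In case (ii) we get $\big(\fint_{B_0}|TA_{r_{B_0}}f|^p\big)^{1/p}\le C_2\mu(B_0)^{-1/q}\|f\|_q$; interpolating/bounding $\|f\|_q$ crudely is not enough, so instead I would replace $\|f\|_q$ by $\|f\|_p$ at the cost of the localization already built into the $U_i$ decomposition — more precisely, one tracks that the only mass of $f$ that matters lives in a fixed dilate of $B_0$ up to exponentially decaying tails, and $\mu(B_0)^{-1/q}\|f\|_{L^q}\lesssim$ a maximal-function-type quantity; combined with the a priori bound $\|f\|_p V(x_M,d(\cdot,x_M)+1)^{-1/p}\le C_E\lambda$ on $B_0$ (and $V(x_M,d(x_B,x_M)+1)\sim\mu(B_0)$ in this regime, since $d(x_B,x_M)<C_0 r_B$ and $\operatorname{diam}(M_0)=1$), this yields $\big(\fint_{B_0}|TA_{r_{B_0}}f|^p\big)^{1/p}\le C_E'\lambda$. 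In case (iii), Proposition \ref{key-estimate}(iii) directly gives $\big(\fint_{B_0}|TA_{r_{B_0}}f|^p\big)^{1/p}\le C_3 V(x_M,d(x_B,x_M)+1)^{-1/q}\|f\|_q + C_4\inf_{B_0}\mathcal M_2(|Tf|)$, and again $V(x_M,d(x_B,x_M)+1)^{-1/q}\|f\|_q\lesssim V(x_M,d(x_B,x_M)+1)^{-1/p}\|f\|_p\le C_E'\lambda$ (absorbing the difference of exponents $1/q<1/p$ using $r_B>\beta$ so the volume is $\gtrsim 1$, or more carefully by tracking the localization as above), while $\inf_{B_0}\mathcal M_2(|Tf|)\le\mathcal M_2(|Tf|)(x_0)\le\lambda$. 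Either way, on the set where $\mathcal M_2|Tf|>K\lambda$ we must have $\mathcal M_2\big(T(I-A_{r_{B_0}})f\big)>K\lambda - C_E'\lambda - C\lambda\ge (K/2)\lambda$ once $K>K_0$ is large; then Chebyshev plus weak $(2,2)$ of $\mathcal M_2$ and Lemma \ref{sharp-maximal} give
\begin{equation*}
\mu\big(\{x\in B_0:\ \mathcal M_2|Tf|(x)>K\lambda,\ \mathcal M^\#_{T,A}f(x)\le\gamma\lambda\}\cap\{\text{3rd condition}\}\big)\lesssim \frac{1}{(K\lambda)^2}\int_{\widehat{B_0}}|T(I-A_{r_{B_0}})f|^2\,d\mu\lesssim \frac{\gamma^2}{K^2}\,\mu(\widehat{B_0}),
\end{equation*}
and $\mu(\widehat{B_0})\lesssim\mu(B_0)$ by $(D_N)$. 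This is even better than the stated bound; the $K^{-q}$ term is really only needed in case (ii) when one uses an $L^q$-type estimate in place of $L^2$ (to accommodate the Riesz potential bound of Lemma \ref{potential}, which is $L^q\to L^q$ for $q<n$ but fails at $q=2$ in general for large balls), so there one replaces the weak $(2,2)$ step by a weak $(q,q)$ step, producing $K^{-q}\mu(B_0)$.

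The main obstacle, and the place requiring the most care, is case (iii): one has to simultaneously (a) extract from the tail sum over the annuli $U_i$ a genuinely exponentially small factor $e^{-c\tau 4^k}$ that beats the volume growth $V(x_M,2^i r_B)$ as $i$ ranges past $k$ (this is already packaged into Proposition \ref{key-estimate}(iii) via $(RD_n)$ and $q<n$), and (b) convert the global quantity $V(x_M,d(x_B,x_M)+1)^{-1/q}\|f\|_q$ into something $\le C_E\lambda$ on $B_0$ \emph{uniformly}, i.e. justify that $\|f\|_q$ (an $L^q$, not $L^p$, norm) can be dominated appropriately — the honest way is to observe that only $f\chi_{C_0 B}$ effectively contributes up to exponential tails and then use $q>p$ together with the doubling normalization to pass from $L^q$ to $L^p$ on that dilate, which is where $r_B>\beta$ and $\operatorname{diam}(M_0)=1$ enter. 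Once the level-set constraint $\|f\|_p V(x_M,d(\cdot,x_M)+1)^{-1/p}\le C_E\lambda$ is in force and $C_E$ chosen small, this absorption goes through and the remaining estimate is the standard Chebyshev $+$ weak-type computation above.
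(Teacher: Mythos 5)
Your overall skeleton matches the paper's — splitting $Tf=TA_{r}f+T(I-A_r)f$, estimating the second piece via $\mathcal{M}^\#_{T,A}$ and the weak $(2,2)$ bound, and handling the first piece with Proposition~\ref{key-estimate} split on whether $d(x_B,x_M)$ is small or large relative to $r_B$. But there is a genuine gap, and you correctly sensed it but proposed a fix that does not work.

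You apply Proposition~\ref{key-estimate} in its ``identity'' labeling: $L^p$ average of $TA_{r}f$ on the left, $\|f\|_q$ on the right (with $q>p$). This leaves you needing to dominate $\|f\|_q$ by an $\|f\|_p$-type quantity, which is in general impossible for $f\in C_c^\infty(M)$ — H\"older goes from $L^q$ \emph{down} to $L^p$ on a bounded set, not up, and the additional volume weight $V(x_M,\cdot)^{-1/q}\ge V(x_M,\cdot)^{-1/p}$ when $V\ge 1$, so both factors in your step ``$V^{-1/q}\|f\|_q\lesssim V^{-1/p}\|f\|_p$'' go the wrong way. Your localization idea (``only $f\chi_{C_0B}$ matters up to exponential tails'') also cannot rescue this: the exponential factor in Lemma~\ref{key-lemma} decays in the \emph{annulus index}, not in the size of $f$, and in fact Proposition~\ref{key-estimate}(ii)--(iii) already retains the full global $\|f\|_q$, not a localized piece.

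The correct observation, which is what the paper actually does, is that in Proposition~\ref{key-estimate} the two exponents are \emph{independent} free parameters: the one on $\fint_B|TA_{r_B}f|^{\cdot}$ can be any exponent in $(2,p_0)$, and the one on $\|f\|_{\cdot}$ can be any exponent in $(2,n)$. Since $2<p<q<p_0<n$, both $p$ and $q$ lie in both intervals, so you may apply Proposition~\ref{key-estimate} with the roles \emph{swapped}: $q$ on the left and $p$ on the right. In case (ii) this gives
$\bigl(\fint_{3B_0}|TA_{3r_B}f|^{q}\,d\mu\bigr)^{1/q}\le C\mu(3B_0)^{-1/p}\|f\|_p$,
and analogously in case (iii) with $V(x_M,d(x_B,x_M)+1)^{-1/p}\|f\|_p$. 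These match the third constraint in the level set exactly, and after the dichotomy on whether $\|f\|_p$ already exceeds the relevant threshold (in which case $C_E$ is chosen so the set is empty), a weak-$(q,q)$ Chebyshev estimate for $\mathcal{M}_2$ produces precisely the $K^{-q}\mu(B_0)$ term. Without this swap of exponents the third constraint is inert, and the argument cannot close.
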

\begin{proof}Let $J,K>1$ and $\gamma,C_E>0$ to be fixed later.
For $\lambda>0$ let
$$E:=\left\{x\in B_0:\, \mathcal{M}_2(|Tf|)(x)>K\lambda,\, \mathcal{M}^\#_{T,A}f(x)\le \gamma \lambda,\, \frac{\|f\|_p}{V(x_M,d(x,x_M)+1)^{1/p}}\le C_E\lambda\right\}$$
and
$$\Omega:=\left\{x\in B_0:\, \mathcal{M}_2 (|TA_{3{r_B}}f|\chi_{3 B_0})(x)>J\lambda,\, \frac{\|f\|_p}{V(x_M,d(x,x_M)+1)^{1/p}}\le C_E\lambda\right\}.$$

{\bf Claim 1.}  There exists $C>0$ such that
\begin{equation}\label{est-omega}
\mu(\Omega)\le CJ^{-q}\mu(B_0).
\end{equation}

Let us prove the claim. First assume $d(x_B,x_M)<100r_B$. Notice that, if
${\|f\|_p}\ge \mu(B_0)^{1/p}\lambda$, then it follows from the doubling property $(D_{N})$
 that for each $x\in B_0$
 \begin{eqnarray*}
 V(x_M,d(x,x_M)+1)&&\le V(x_M,d(x,x_M)+r_B)\le V(x_B,2d(x_B,x_M)+2r_B)\\
 &&\le C\left(\frac{2d(x_B,x_M)+2r_B}{r_B}\right)^NV(x_B,r_B)\le C\mu(B_0),
  \end{eqnarray*}
and hence,  there exists $c_1>0$ such that for each $x\in B_0$ it holds
$$\lambda\le \|f\|_p\mu(B_0)^{-1/p}\le c_1\frac{\|f\|_p}{V(x_M,d(x,x_M)+1)^{1/p}}.$$
By choosing $C_E<1/c_1$ we see that
\begin{eqnarray}\label{est-omega-1}
\mu(\Omega)=0.
\end{eqnarray}

Suppose now ${\|f\|_p}<\mu(B_0)^{1/p}\lambda$. Recall that the $2$-Hardy-Littlewood maximal operator $\mathcal{M}_2$
is bounded on $L^q(M)$ for all $q>2$. By using (ii) of Proposition \ref{key-estimate} and $(D_{N})$ one has
\begin{eqnarray*}
\left(\fint_{3  B_0}|TA_{3  {r_B}}f|^{q}\,d\mu\right)^{1/q}&&\le  \frac{C_2\|f\|_p}{\mu(3  B_0)^{1/p}}<C\lambda,
\end{eqnarray*}
which together with the $(q,q)$ boundedness of $\mathcal{M}_2$ implies that
\begin{equation}\label{est-omega-2}
\mu(\Omega)\le \frac{1}{(J\lambda)^q}\int_{B_0}\mathcal{M}_2 (|TA_{3{r_B}}f|\chi_{3 B_0})^q\,d\mu\le \frac{C}{(J\lambda)^q}\int_{3  B_0}|TA_{3  {r_B}}f|^{q}\,d\mu\le \frac{C}{J^q}\mu(B_0).
\end{equation}
% Recall that  $x_{M}\in M_0$ is fixed and the diameter of $M_0$ equals one. Notice that it holds
%\begin{eqnarray}\label{maximal-distance}
%\mathcal{M}_2\left({[1+d(\cdot,M_0)]^{-N/p}}\chi_{3  B_0}\right)(x)\le C_4[1+d(x,x_M)]^{-N/p},
%\end{eqnarray}
%where $C_4<\infty$ depending on $M_0$.
%This can be seen by noticing the fact that $|x|^{-\delta}$ is an $A_1$ weight on $\rr^n$ for any $\delta \in [0,n)$.
%Indeed, if $d(x,x_M)<100$, then this is obvious. Otherwise, notice that for any ball $B=B(z,r_B)$ containing $x$, if $r_B<d(x,x_M)/4$, then one has
%$d(y,M_0)\ge d(x,x_M)-d(x,y)\ge  d(x,x_M)/2$ for each $y\in B$, and hence,
%\begin{eqnarray*}
%\fint_{B}[1+d(y,M_0)]^{-2N/p}\chi_{3  B_0}(y)\,d\mu(y)\le C[1+d(x,x_M)]^{-2N/p},
%\end{eqnarray*}
%and if $r_B\ge d(x,x_M)/4 $, one has for each $y\in B$,
%$$d(y,x_{M_0})\le d(y,x)+d(x,x_{M_0})\le 2r_B+d(x,x_M)+1<7r_B,$$
%and hence,
%\begin{eqnarray*}
%&&\fint_{B}[1+d(y,M_0)]^{-2N/p}\chi_{3  B_0}(y)\,d\mu(y)\\
%&&\le C(r_B)^{-N} \left(\int_{B(x_{M_0},2)}\,d\mu+\int_{B(x_{M_0},7r_B)\setminus B(x_{M_0},2r_{M_0})}[d(y,x_{M_0})]^{-2N/p}\,d\mu(y)\right)\\
%&&\le C(r_B)^{-2N/p}\le C[1+d(x,x_M)]^{-2N/p}.
%\end{eqnarray*}
%These give \eqref{maximal-distance}, which implies that
%\begin{eqnarray*}
%\Omega\subset \left\{x\in B_0:\, \frac{\|f\|_p}{[1+d(x,x_M)]^{N/p}}>\frac{J\lambda}{C_4C_2}\right\}.
%\end{eqnarray*}

If $d(x_B,x_M)\ge 100 r_B$, then by using (iii) of Proposition \ref{key-estimate},  one has via the fact  $\mathcal{M}_2(Tf)(x_0)\le \lambda$ that
\begin{eqnarray}\label{est-omega-3}
\left(\fint_{3  B_0}|TA_{3  r_B}f|^{q}\,d\mu\right)^{1/q}&&\le \frac{C_3\|f\|_p}{V(x_M,d(x_B,x_M)+1)^{1/p}}+C_{4}\inf_{y\in   B_0}\mathcal{M}_2(|Tf|)(y)\nonumber\\
&&\le C_4\lambda+\frac{C_3\|f\|_p}{V(x_M,d(x_B,x_M)+1)^{1/p}}.
\end{eqnarray}
If
${\|f\|_p}\ge V(x_M,d(x_B,x_M)+1)^{1/p}\lambda$, then by the fact that $d(x_B,x_M)\ge 100 r_B$, we see that for each $x\in B_0$ it holds
$$\lambda\le \frac{\|f\|_p}{V(x_M,d(x_B,x_M)+1)^{1/p}}\le c_2\frac{\|f\|_p}{V(x_M,d(x,x_M)+1)^{1/p}}.$$
By choosing $C_E<1/c_2$ we find
\begin{eqnarray}\label{est-omega-4}
\mu(\Omega)=0.
\end{eqnarray}
Suppose now ${\|f\|_p}<V(x_M,d(x_B,x_M)+1)^{1/p}\lambda$.
Then by the  $(q,q)$ boundedness of $\mathcal{M}_2$ and \eqref{est-omega-3} we obtain
\begin{eqnarray}\label{est-omega-5}
\mu(\Omega)&&\le \frac{1}{(J\lambda)^q}\int_{B_0}\mathcal{M}_2 (|TA_{3{r_B}}f|\chi_{3 B_0})^q\,d\mu\le \frac{C}{(J\lambda)^q}\int_{3  B_0}|TA_{3  r_B}f|^{q}\,d\mu\nonumber\\
&&\le \frac{C}{(J\lambda)^q} \left(\lambda+\frac{\|f\|_p}{V(x_M,d(x_B,x_M)+1)^{1/p}}\right)^q\mu(3B_0)\nonumber\\
&&\le \frac{C}{J^q}\mu(B_0).
\end{eqnarray}
By choosing $0<C_E<\min\{1/c_1,1/c_2\}$, the above estimates \eqref{est-omega-1}--\eqref{est-omega-5} confirm Claim 1.

%As $d(x_0,M_0)\ge 6  r_0$,
%$$2d(x_0,M_0)\ge d(x,x_M)\ge d(x_0,M_0)-d(x,x_0)\ge d(x_0,M_0)/2$$
%for all $x\in 3  B_0$. Therefore, for a.e. $x\in 3  B_0$ we have
%\begin{eqnarray*}
%\mathcal{M}_2(|TA_{3  r_0}f|)(x)&&\le C_3\lambda+{C_2\|f\|_p}\mathcal{M}_2({(d(\cdot,M_0)+1)^{-N/p}}\chi_{3  B_0})(x)\\
%&&\le C_3\lambda+C\|f\|_p[d(x_0,M_0)+1]^{-N/p}\\
%&&\le C_3\lambda+C_5\|f\|_p[d(x,x_M)+1]^{-N/p}.
%\end{eqnarray*}
%This gives
%\begin{eqnarray*}
%\Omega\subset \left\{x\in B_0:\, \frac{\|f\|_p}{[1+d(x,x_M)]^{N/p}}>\frac{(J-C_3)\lambda}{C_2C_5}\right\}.
%\end{eqnarray*}

%Letting $J:=C_3+C_2C_5+C_2C_4$ we see that for both cases
%\begin{eqnarray}\label{est-omega}
%\Omega\subset \left\{x\in B_0:\, \frac{\|f\|_p}{[1+d(x,x_M)]^{N/p}}>\lambda\right\}.
%\end{eqnarray}

Let us estimate the measure of the set $E\setminus \Omega$.
Notice that there exists $c_0>0$ only depending on the measure such that if $c_0K^2>1$ then
\begin{equation*}
\mathcal{M}_2(|Tf|\chi_{3  B_0})(x)>K\lambda,
 \end{equation*}
if $x\in E$. Indeed, since $\mathcal{M}_2(|Tf|)(x)>K\lambda$ for $x\in E$ and $\mathcal{M}_2(|Tf|)(x_0)\le\lambda$, there exists a ball $B=B(z,r)$ such that $x\in B$, $x_0\notin B$ and
$$\int_{B}|Tf|^2\,d\mu>K^2\lambda^2\mu(B),$$
and hence
$$\int_{B(x,2r)}|Tf|^2\,d\mu\ge \int_{B}|Tf|^2\,d\mu>K^2\lambda^2\mu(B)\ge c_0K^2\lambda^2V(x,2r)>\lambda^2V(x,2r).$$
This implies that $r<  r_B$. To see this, let us assume $r\ge r_B$. Then since $x\in E\subset B_0=B(x_B,r_B)$, one has $x_0\in B_0\subset B(x,2r)$.
This together with the above inequality implies that $\mathcal{M}_2(|Tf|)(x_0)>\lambda$, which  contradicts
the assumption $\mathcal{M}_2(|Tf|)(x_0)\le \lambda$.
Therefore it holds $r<  r_B$, $B\subset 3  B_0$, and hence $\mathcal{M}_2(|Tf|\chi_{3  B_0})(x)>K\lambda$.

Therefore, there exists $K_0>0$ large enough, such that for any $K>K_0$ and $x\in E$ it holds
\begin{equation*}
\mathcal{M}_2(|Tf|\chi_{3  B_0})(x)>K\lambda.
 \end{equation*}
By letting $K=J+1>K_0$ we find
\begin{eqnarray*}
&&\mu(E\setminus\Omega)\\
&&\le \mu\left(\left\{x\in B_0:\,
\mathcal{M}_2(|TA_{3  r_B}f|\chi_{3  B_0})(x)\le J\lambda, \, \mathcal{M}_2(|Tf|\chi_{3  B_0})(x)>K\lambda, \,  \mathcal{M}^\#_{T,A}f(x)\le \gamma\lambda\right\}\right)\\
&&\le \mu\left(\left\{x\in B_0:\, \mathcal{M}_2(|T(I-A_{3  r_B})f|\chi_{3  B_0})(x)>(K-J)\lambda, \,  \mathcal{M}^\#_{T,A}f(x)\le \gamma\lambda\right\}\right)\\
&&\le \frac{C}{(K-J)^2\lambda^2}\int_{3  B_0} |T(I-A_{3  r_B})f|^2\,d\mu\le \frac{C\mu(B_0)}{\lambda^2}\inf_{x\in B_0:\,\mathcal{M}^\#_{T,A}f(x)\le \gamma\lambda}\left(\mathcal{M}^\#_{T,A}f(x)\right)^2\\
&&\le C\gamma^2\mu(B_0).
\end{eqnarray*}
This together with the estimate  \eqref{est-omega} with $J=K-1$ for $\Omega$ gives the desired result.
\end{proof}

We next show that $(G_{p_0})$ implies $(R_p)$ for $p<p_0$.
\begin{thm}\label{main-weak-p}
Assume that $(D_{N})$ and $(RD_n)$ hold on $M$  with $2<n\le N<\infty$,  and that $(UE)$ and $(P^E_2)$ hold.
Suppose that  $(G_{p_0})$ for some $p_0\in (2,n)$ hold.
Then $(R_p)$ holds for all $p\in (2,p_0)$.
\end{thm}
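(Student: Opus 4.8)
The plan is to run a good-$\lambda$ inequality argument for the Hardy--Littlewood maximal function of $|Tf|$, combining the two local estimates already established: Proposition~\ref{small-ball} for balls of radius below a fixed threshold, and Proposition~\ref{large-ball} for large balls. The extra feature here, compared to \cite{acdh}, is that the large-ball good-$\lambda$ inequality carries the additional ``bad set'' where the quantity $\|f\|_p/V(x_M,d(x,x_M)+1)^{1/p}$ exceeds $C_E\lambda$; so the first task is to control that set. By the reverse doubling $(RD_n)$ and the fact that $p<n$, one has $\int_M V(x_M,d(x,x_M)+1)^{-1}\,d\mu(x)<\infty$ — more precisely, the function $x\mapsto V(x_M,d(x,x_M)+1)^{-1/p}$ lies in weak-$L^p$ with norm comparable to a constant, by a standard layer-cake computation using $(D_N)$ and $(RD_n)$. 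Hence the set $\{x: \|f\|_p/V(x_M,d(x,x_M)+1)^{1/p}>C_E\lambda\}$ has measure $\lesssim (C_E\lambda)^{-p}\|f\|_p^p$, which is exactly the bound one needs for an $L^p$ (in fact weak-$L^p$) estimate.

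The main steps, in order, are as follows. First I would fix $p\in(2,p_0)$, choose $q$ with $p<q<p_0$, and pick the thresholds $\alpha=\beta$ large enough for Propositions~\ref{small-ball} and~\ref{large-ball} to apply, together with the structural constants $K_0$, $C_E$ they produce. Second, I would run a Calder\'on--Zygmund / Whitney-type covering of the level set $\{\mathcal{M}_2(|Tf|)>K\lambda\}$ by balls $B_0$ each containing a point $x_0$ with $\mathcal{M}_2(|Tf|)(x_0)\le\lambda$, splitting the balls into those with $r_{B_0}<\alpha$ (use Proposition~\ref{small-ball}) and those with $r_{B_0}\ge\alpha$ (use Proposition~\ref{large-ball}); summing over the covering with the usual bounded-overlap control yields
\begin{equation*}
\mu\big(\{\mathcal{M}_2(|Tf|)>2K\lambda,\ \mathcal{M}^\#_{T,A}f\le\gamma\lambda\}\big)\le C(\gamma^2+K^{-q})\,\mu\big(\{\mathcal{M}_2(|Tf|)>\lambda\}\big)+C(C_E\lambda)^{-p}\|f\|_p^p,
\end{equation*}
the last term coming from the bad set just discussed. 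Third, from this good-$\lambda$ inequality together with Lemma~\ref{sharp-maximal} (which gives $\mathcal{M}^\#_{T,A}f\le C\mathcal{M}_2(|f|)$), I would integrate against $p\lambda^{p-1}\,d\lambda$ in the usual way: choosing $\gamma$ small and $K$ large so that $C(\gamma^2+K^{-q})\cdot(2K)^p<1/2$, one absorbs the first term and obtains $\|\mathcal{M}_2(|Tf|)\|_{p,\infty}\lesssim \|\mathcal{M}_2(|f|)\|_p+\|f\|_p\lesssim\|f\|_p$ for $f\in C_c^\infty(M)$, since $\mathcal{M}_2$ is bounded on $L^p$ for $p>2$. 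Finally, a density argument (and, if a strong-type rather than weak-type bound is wanted, either interpolation with the trivial $L^2$ bound $(R_2)$ or a Marcinkiewicz-type upgrade via varying $p$ slightly) gives $(R_p)$.

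The technical heart — and the step I expect to be the main obstacle — is organizing the covering argument so that the two regimes $r_{B_0}<\alpha$ and $r_{B_0}\ge\alpha$ patch together cleanly at the threshold, and verifying that the ``far-away'' error term $\|f\|_p/V(x_M,d(x,x_M)+1)^{1/p}$ genuinely integrates to something controlled by $\|f\|_p$. This is where $p<n$ is used in an essential way: for $p\ge n$ the weight $V(x_M,d(x,x_M)+1)^{-1/p}$ fails to be in weak-$L^p$ near infinity (on a manifold with two Euclidean ends it behaves like $|x|^{-n/p}$, which is only in weak-$L^p$ when $p<n$), and the argument — indeed the conclusion — breaks down, in agreement with the negative results of \cite{cch06,ca16}. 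A secondary point requiring care is that Propositions~\ref{small-ball} and~\ref{large-ball} are stated for $f\in C_c^\infty(M)$, so all the maximal-function manipulations must be done at that level of regularity first, and only at the very end does one pass to general $f\in L^p(M)$ by density, using that $\mathcal{L}^{-1/2}$ is a priori defined on a dense class.
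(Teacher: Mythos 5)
Your overall architecture matches the paper's proof: a Whitney--Coifman--Weiss covering of $\{\mathcal{M}_2|Tf|>\lambda\}$, split into small and large balls handled by Propositions~\ref{small-ball} and~\ref{large-ball} respectively, a good-$\lambda$ inequality carrying the extra sets $F_{\gamma\lambda}$ and $G_\lambda$, a weak-$L^p$ conclusion, and interpolation against $(R_2)$. That is exactly what the paper does.

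Two points in your write-up are factually wrong, though neither destroys the proof. First, $\int_M V(x_M,d(x,x_M)+1)^{-1}\,d\mu(x)$ is \emph{not} finite: dyadic decomposition gives $\sum_k V(x_M,2^k)/V(x_M,2^k)=\infty$ whenever $\mu(M)=\infty$. Second, the weight $x\mapsto V(x_M,d(x,x_M)+1)^{-1/p}$ lies in $L^{p,\infty}(M)$ for \emph{every} $p$, not only for $p<n$; the level set $\{V(x_M,d(\cdot,x_M)+1)<t^{-p}\}$ is contained in a ball of volume $\le t^{-p}$, by monotonicity of $r\mapsto V(x_M,r)$, with no use of $(D_N)$, $(RD_n)$, or the inequality $p<n$. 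Consequently your claim that $p<n$ is ``where the argument breaks down'' for $p\ge n$ misattributes the constraint. The restriction $p<n$ is genuinely used, but in Lemma~\ref{potential}, i.e.\ the mapping property of $\mathcal{L}^{-1/2}$ from $L^p$ to normalized $L^p$ averages on large central balls: the convergence of $\int^\infty s^{-n/(2p)-1/2}\,ds$ there needs $p<n$, and that lemma feeds into Propositions~\ref{key-estimate}, \ref{small-ball} and \ref{large-ball}. The far-field set $G_\lambda$ itself is handled by the trivial ball-volume bound $\mu(G_\lambda)\le(\|f\|_p/(C_E\lambda))^p$ with no dimension restriction, exactly as in the paper.

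One further technical wrinkle: you say you would ``integrate against $p\lambda^{p-1}\,d\lambda$ in the usual way.'' That direct integration fails here, because the additive error term $C(C_E\lambda)^{-p}\|f\|_p^p$ in your good-$\lambda$ inequality integrates to $\|f\|_p^p\int_0^\infty p\lambda^{-1}\,d\lambda=\infty$. The paper instead multiplies the good-$\lambda$ inequality by $(K\lambda)^p$ and takes the supremum over $\lambda>0$, obtaining the weak-$L^p$ estimate $\|\mathcal{M}_2|Tf|\|_{L^{p,\infty}}^p\le\frac12\|\mathcal{M}_2|Tf|\|_{L^{p,\infty}}^p+C\|f\|_p^p$ directly. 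You do land on the correct weak-$L^p$ conclusion, so this is a misdescription of the manipulation rather than a gap; but as written it is internally inconsistent and a naive reader trying to carry out the integration would get stuck.
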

\begin{proof}
Let $p\in (2,p_0)$ and fix $q\in (p,p_0)$. Let $f\in C^\infty_c(M)$. Then we have  $ |\nabla \mathcal{L}^{-1/2}f|\in L^2(M)$. For each $\lambda>0$, let
$$E_\lambda:=\left\{x\in M:\, \mathcal{M}_2|\nabla \mathcal{L}^{-1/2}f|(x)>\lambda\right\}. $$
Then $\mu(E_\lambda)<\infty$ for each $\lambda>0$.

By \cite[Chapter III, Theorem 1.3]{cw77}, we can find a sequence of balls $\{B_i\}_i$ with finite overlap property, such that
$E_{\lambda}=\cup_{i}B_i$. Moreover, there exists $C_W>1$ such that
there exists $\tilde x_i\in C_WB_i$, $ \mathcal{M}_2(|\nabla \mathcal{L}^{-1/2}|)(\tilde x_i)\le \lambda$ for each $i$.

Fix a sufficient large $K_0>0$ such that Propositions \ref{small-ball} and \ref{large-ball} hold for any $K>K_0$. Let $\gamma>0$ to be fixed later. Set
$$F_{\gamma\lambda}:=\left\{x\in M:\,\mathcal{M}^\#_{T,A}f(x)> \gamma \lambda\right\}$$
and
$$G_{\lambda}:=\left\{x\in M:\, \frac{\|f\|_p}{V(x_M,d(x,x_M)+1)^{1/p}}>C_E\lambda\right\},$$
where $C_E$ is the constant from Proposition \ref{large-ball}.
By noticing that $E_{K\lambda}\subset E_\lambda$, we find
\begin{eqnarray*}
\mu(E_{K\lambda}\setminus (F_{\gamma\lambda}\cup G_\lambda))&&\le \sum_{B_i:\, r_{B_i}<100}\mu((B_i\cap E_{K\lambda})\setminus F_{\gamma\lambda})+\sum_{B_i:\, r_{B_i}\ge 100}\mu((B_i\cap E_{K\lambda})\setminus (F_{\gamma\lambda}\cup G_\lambda))\\
&&=:I_3+I_4.
\end{eqnarray*}
For each $B_i$ with $r_{B_i}<100$, by applying Proposition \ref{small-ball} to the ball $C_WB_i$, one concludes that
$$\mu((B_i\cap E_{K\lambda})\setminus F_{\gamma\lambda})\le \mu((C_WB_i\cap E_{K\lambda})\setminus F_{\gamma\lambda})\le C\left(\gamma^2+K^{-q}\right)\mu(C_WB_i)\le C\left(\gamma^2+K^{-q}\right)\mu(B_i),$$
and hence by the bounded overlap property of $\{B_i\},$ we obtain
\begin{eqnarray*}
I_3&&\le \sum_{B_i:\, r_{B_i}<100} C\left(\gamma^2+K^{-q}\right)\mu(B_i)\le C\left(\gamma^2+K^{-q}\right)\mu(E_{\lambda}).
\end{eqnarray*}
Meanwhile,  Proposition \ref{large-ball} gives  for each $B_i$ with $r_{B_i}\ge 100$ that
\begin{eqnarray*}
\mu((B_i\cap E_{K\lambda})\setminus (F_{\gamma\lambda}\cup G_\lambda))&&\le \mu((C_WB_i\cap E_{K\lambda})\setminus (F_{\gamma\lambda}\cup G_\lambda)))\le C\left(\gamma^2+K^{-q}\right)\mu(B_i),
\end{eqnarray*}
and hence by applying the bounded  overlap  property of $\{B_i\}$ once more, we obtain
\begin{eqnarray*}
I_4&&\le \sum_{B_i:\, r_{B_i}\ge 100} C\left(\gamma^2+K^{-q}\right)\mu(B_i)\le C\left(\gamma^2+K^{-q}\right)\mu(E_{\lambda}).
\end{eqnarray*}
By the estimates of $I_3$ and $I_4$, we conclude that
\begin{eqnarray}\label{est-e-lambda}
\mu(E_{K\lambda})&&\le  C\left(\gamma^2+K^{-q}\right)\mu(E_{\lambda})+\mu(F_{\gamma\lambda})+\mu(G_\lambda).
\end{eqnarray}

It follows from Lemma \ref{sharp-maximal} that
\begin{eqnarray*}
\mu(F_{\gamma\lambda})\le \frac{1}{(\gamma \lambda)^p}\int_M (\mathcal{M}^\#_{T,A}f)^p\,d\mu\le \frac{C}{(\gamma \lambda)^p}\int_M (\mathcal{M}_2f)^p\,d\mu\le \frac{C\|f\|^p_p}{(\gamma \lambda)^p}.
\end{eqnarray*}

Let us estimate $\mu(G_\lambda)$.
If $V(x_M,1)(C_E\lambda)^p\ge \|f\|_p^p$, then since for any $x\in M$ it holds
$$\frac{\|f\|_p}{V(x_M,d(x,x_M)+1)^{1/p}}\le \frac{\|f\|_p}{V(x_M,1)^{1/p}}\le C_E\lambda,$$
we conclude that $G_\lambda=\emptyset$.

 If $V(x_M,1)(C_E\lambda)^p< \|f\|^p_p$,  then we have
$$G_{\lambda}= \left\{x\in M:\, V(x_M,d(x_M,x)+1)<\left(\frac{\|f\|_p}{C_E\lambda}\right)^{p}\right\},$$
and hence
$$\mu(G_\lambda)\le C\left(\frac{\|f\|_p}{C_E\lambda}\right)^{p}.$$
Inserting the estimates of $\mu(F_{\gamma\lambda})$ and $\mu(G_\lambda)$ into the estimate \eqref{est-e-lambda}, we see that
\begin{eqnarray*}
\mu(E_{K\lambda})&&\le C\left(\gamma^2+K^{-q}\right)\mu(E_{\lambda})+\frac{C\|f\|^p_p}{(\gamma \lambda)^p}+\left(\frac{\|f\|_p}{C_E\lambda}\right)^{p}.
\end{eqnarray*}
This implies that for each $\lambda>0$
\begin{eqnarray*}
(K\lambda)^p\mu(E_{K\lambda})&&\le C\left(\gamma^2+K^{-q}\right)(K\lambda)^p\mu(E_{\lambda})+CK^p\gamma^{-p}\|f\|_p^p+CK^p\|f\|_p^p.
\end{eqnarray*}
By taking $K$ large enough first and then $\gamma$ small enough, we see that
\begin{eqnarray*}
\|\mathcal{M}_2(|\nabla \mathcal{L}^{-1/2}f|)\|_{L^{p,\infty}}^p&&\le \frac 12 \|\mathcal{M}_2(|\nabla \mathcal{L}^{-1/2}f|)\|^p_{L^{p,\infty}}+ C(K,\gamma,p)\|f\|_p^p,
\end{eqnarray*}
which implies the Riesz transform is bounded from $L^p(M)$
to $L^{p,\infty}(M)$ for $p\in (2,p_0)$.

Since the Riesz transform is naturally $L^2$-bounded, we conclude that the Riesz transform is $L^p$-bounded for any $p\in (2,p_0)$ via the Marcinkiewicz interpolation theorem.
\end{proof}

\subsection{Harmonic functions and Riesz transform}
\hskip\parindent
We need the following lemmas to conclude Theorem \ref{main-n} and Theorem \ref{main}.
Recall that $M$ is a complete, non-compact manifold with one or more but finitely many ends.

Let $p\in (2,\infty]$. We say that  the local reverse
$L^p$-H\"older inequality for gradients of harmonic functions holds on  $M$,  if for all $r_0>0$ there exists
$C_H(r_0)>0$  such that, for all balls  $B$ with $r_B<r_0$, and each  $u$ satisfying $\mathcal{L} u=0$ in $3B$, it holds
$$\left(\fint_{B}|\nabla u|^p\,d\mu\right)^{1/p}\le \frac {C_H(r_0)}{r_B} \fint_{2B}|u|\,d\mu.
\leqno (RH_{p,\loc})$$
Notice that, if the constant $C_H(r_0)$ can be taken independent of $r_0$, then $(RH_{p,\loc})$ becomes $(RH_{p})$.

We shall need the mean value property
 for harmonic functions (see \cite[Proposition 2.1]{cjks16} for instance).
\begin{lem}\label{harnack}
Assume that $(D_{N})$ holds on $M$  with $0< N<\infty$,  and that  $(UE)$ holds.
 Then for any $\beta\in (0,1)$ there exists $C>0$ depending on $\beta$ such that if $\mathcal{L} u=0$ in $B(x_0,r)$, then
$$\|u\|_{L^\fz(B(x_0,\beta r))}\le C\fint_{B(x_0,r)}|u|\,d\mu.$$
\end{lem}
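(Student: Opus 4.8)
The plan is to regard the estimate as a local $L^\infty$-mean value inequality for the subsolution $|u|$ of $\mathcal{L}v=0$, to first obtain the version with an $L^2$-average on the right by Moser iteration, and then to self-improve that $L^2$-average into the $L^1$-average that is claimed. The only genuinely substantial input is that, although $(UE)$ alone does not force a global Sobolev inequality, the pair $(D)$ and $(UE)$ does force a scale-invariant \emph{local} Sobolev inequality; everything else is routine.

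First I would recall that, by Grigor'yan's heat-kernel characterisations (see \cite{gri92,gri-sal09}), on a complete manifold the conjunction of $(D)$ and $(UE)$ is equivalent to the relative Faber--Krahn inequality: there exist $\nu>2$ and $c>0$ such that for every ball $B=B(z,\rho)$ and every relatively compact open $\Omega\subset B$,
\[
\lambda_1(\Omega)\ \ge\ \frac{c}{\rho^2}\left(\frac{V(z,\rho)}{\mu(\Omega)}\right)^{2/\nu},
\]
$\lambda_1(\Omega)$ denoting the bottom of the Dirichlet spectrum of $\mathcal{L}$ on $\Omega$. Via Maz'ya's truncation argument this is in turn equivalent to a localised $L^2$-Sobolev inequality: for all $\phi\in C_c^\infty(B(z,\rho))$,
\[
\left(\fint_{B(z,\rho)}|\phi|^{\frac{2\nu}{\nu-2}}\,d\mu\right)^{\frac{\nu-2}{\nu}}\ \lesssim\ \rho^2\fint_{B(z,\rho)}|\nabla\phi|^2\,d\mu\ +\ \fint_{B(z,\rho)}|\phi|^2\,d\mu,
\]
with constants depending only on $\nu$ and the doubling constant.

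Next, fixing $\beta<\beta'<1$, I would combine this with the Caccioppoli (reverse Poincar\'e) inequality for $u$ — obtained by testing $\mathcal{L}u=0$ against cut-off powers of $u$, which needs no geometric hypothesis — and run Moser's iteration on a sequence of concentric balls whose radii decrease geometrically from $\beta' r$ to $\beta r$. (Closed balls on a complete manifold are compact, so $u$ and all the integrals involved are finite.) This produces
\[
\|u\|_{L^\infty(B(x_0,\beta r))}\ \lesssim\ \left(\fint_{B(x_0,\beta' r)}|u|^2\,d\mu\right)^{1/2},
\]
with a constant depending only on $\beta,\beta',\nu$ and the doubling constant, hence independent of $x_0$ and $r$. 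An alternative for this step is to regard $u$ as a time-independent solution of the heat equation and quote the parabolic mean value inequality for subsolutions, which is itself a consequence of the relative Faber--Krahn inequality.

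Finally I would upgrade the $L^2$-average to an $L^1$-average by the familiar Li--Schoen self-improvement: setting $\Phi(t):=\|u\|_{L^\infty(B(x_0,tr))}$, covering $B(x_0,sr)$ by boundedly many balls of radius $\sim(t-s)r$ with doubles contained in $B(x_0,tr)$, applying the previous step to each and using $|u|\le\Phi(t)^{1/2}|u|^{1/2}$ together with Young's inequality gives
\[
\Phi(s)\ \le\ \tfrac12\,\Phi(t)\ +\ \frac{C}{(t-s)^{a}}\fint_{B(x_0,r)}|u|\,d\mu
\]
for some $a>0$ and all $\beta\le s<t\le 1$; a standard iteration lemma then yields $\Phi(\beta)\lesssim\fint_{B(x_0,r)}|u|\,d\mu$, which is the claim since $\beta\in(0,1)$ was arbitrary. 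I expect the first step — extracting the scale-invariant local Sobolev inequality from $(UE)$ alone — to be the main obstacle; once it is in hand, the Caccioppoli estimate, the Moser iteration and the $L^2\to L^1$ passage are all standard.
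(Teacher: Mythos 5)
Your proof is correct and follows essentially the same route the paper takes: the paper simply cites \cite[Proposition~2.1]{cjks16} for the $\beta=1/2$ case (which is precisely the $L^\infty$--$L^1$ mean value inequality obtained from the local Sobolev/Faber--Krahn inequality implied by $(D)$+$(UE)$, via Moser iteration and the $L^2\to L^1$ self-improvement) and then extends to general $\beta$ by a covering argument. You have merely unpacked that citation in detail, handling general $\beta$ directly inside the iteration rather than by a final covering step, which is an immaterial variation.
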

\begin{proof}
The case $\beta=1/2$ is a well-known fact as a consequence of Sobolev inequality; see \cite[Proposition 2.1]{cjks16} for instance.
The general case for $\beta\in (0,1)$ follows from a simple covering argument.
\end{proof}

\begin{lem}\label{stable-rhp}
Assume that $(D_{N})$ and $(RD_n)$ hold on $M$  with $2<n\le N<\infty$,  and that $(UE)$ and $(P^E_2)$ hold.
Let $p\in (2,n)$. Then $(RH_p)$ holds if and only if $(RH_{p,\loc})$ and $(RH^E_p)$ hold.
\end{lem}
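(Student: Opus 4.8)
The implication that $(RH_p)$ implies both $(RH_{p,\loc})$ and $(RH^E_p)$ is immediate: $(RH_p)$ applied to balls of radius $<r_0$ gives $(RH_{p,\loc})$ with a constant that may even be chosen independent of $r_0$, and $(RH_p)$ applied to balls $B$ with $3B\cap M_0=\emptyset$ is exactly $(RH^E_p)$. So all the content is in the converse. To prove $(RH_p)$ for a given ball $B=B(x_B,r_B)$ and a harmonic $u$ on $3B$, I would fix once and for all a large absolute constant $R_0$ (say $R_0=100$) and split into three cases. If $r_B\le R_0$, then $(RH_p)$ for $B$ is literally $(RH_{p,\loc})$ with $r_0=R_0+1$. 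If $r_B>R_0$ and $3B\cap M_0=\emptyset$, it is literally $(RH^E_p)$. The only substantial case is $r_B>R_0$ together with $3B\cap M_0\ne\emptyset$; since $\mathrm{diam}(M_0)=1$ this forces $d(x_B,x_M)<3r_B+1$, so $B$ sits near $M_0$ at scale $r_B$, and the task is to produce the reverse Hölder estimate with a constant independent of $r_B$.

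The one preliminary fact I would record is a consequence of the Harnack inequality (Lemma~\ref{harnack}) applied to $u$ on $B(x_B,2r_B)\subset 3B$: $\sup_{\frac32 B}|u|\le C\fint_{2B}|u|\,d\mu$ with $C$ depending only on the constants in $(D_{N})$ and $(UE)$, and hence $\fint_{B'}|u|\,d\mu\le C\fint_{2B}|u|\,d\mu$ for every ball $B'\subset\frac32 B$; this uniform comparison (a genuine feature of harmonicity) is what avoids the loss one would otherwise suffer when passing from $B$ to sub-balls. I would then decompose $B=A_1\cup A_2$ with $A_1:=\{x\in B:\ d(x,M_0)<R_0\}$ and $A_2:=B\setminus A_1$, noting that $A_1\subset B(x_M,R_0+1)$ is a bounded set independent of $B$. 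For $A_1$, cover it by a number $M_1=M_1(R_0)$, bounded independently of $B$, of unit balls $B(z_i,1)$ with centres in $A_1$; since $r_B>R_0$ one checks $B(z_i,3)\subset 3B$ and $B(z_i,2)\subset\frac32 B$, so $(RH_{p,\loc})$ with $r_0=4$ together with the Harnack comparison gives $\fint_{B(z_i,1)}|\nabla u|^p\,d\mu\le C(R_0)\big(\fint_{2B}|u|\,d\mu\big)^p$; summing over $i$ and using $\mu(B)\gtrsim V(x_M,r_B)\gtrsim V(x_M,2)\,r_B^{\,n}$ (the last bound by the reverse doubling $(RD_n)$ at $x_M$) together with $p<n$ bounds $\int_{A_1}|\nabla u|^p\,d\mu$ by $C(R_0)\,\mu(B)\,r_B^{-p}\big(\fint_{2B}|u|\,d\mu\big)^p$.

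For $A_2$, attach to each $x\in A_2$ the ball $B_x:=B\big(x,c_0\min\{d(x,M_0),r_B\}\big)$ for a small absolute constant $c_0$ chosen so that the relevant dilates below stay inside $\frac32 B$ and miss $M_0$; extract (e.g.\ via a Vitali argument) a bounded-overlap subfamily $\{B_{x_k}\}$ still covering $A_2$, with $r_{B_{x_k}}\ge c_0R_0>1$, $3B_{x_k}\subset 3B$, $3B_{x_k}\cap M_0=\emptyset$, and $2B_{x_k}\subset\frac32 B$. Applying $(RH^E_p)$ on each $3B_{x_k}$ and then the Harnack comparison yields $\fint_{B_{x_k}}|\nabla u|^p\,d\mu\le C\,r_{B_{x_k}}^{-p}\big(\fint_{2B}|u|\,d\mu\big)^p$, so that
$$\int_{A_2}|\nabla u|^p\,d\mu\ \le\ C\Big(\fint_{2B}|u|\,d\mu\Big)^p\sum_k\frac{\mu(B_{x_k})}{r_{B_{x_k}}^{\,p}}.$$
Grouping the $B_{x_k}$ by the dyadic size $2^\ell$ of their radii, the scale-$\ell$ centres lie in $B(x_M,C2^\ell)$ and the balls have bounded overlap, so $\sum_{r_{B_{x_k}}\sim 2^\ell}\mu(B_{x_k})\le C\,V(x_M,2^\ell)$; invoking $(RD_n)$ once more, $V(x_M,2^\ell)\le C\,V(x_M,r_B)(2^\ell/r_B)^n$, hence
$$\sum_k\frac{\mu(B_{x_k})}{r_{B_{x_k}}^{\,p}}\ \le\ C\,\frac{V(x_M,r_B)}{r_B^{\,n}}\sum_{2^\ell\le r_B}2^{\ell(n-p)}\ \le\ C\,\frac{V(x_M,r_B)}{r_B^{\,p}}\ \le\ C\,\frac{\mu(B)}{r_B^{\,p}},$$
the geometric series converging precisely because $p<n$. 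Adding the $A_1$ and $A_2$ estimates gives $\big(\fint_B|\nabla u|^p\,d\mu\big)^{1/p}\le\frac{C}{r_B}\fint_{2B}|u|\,d\mu$ with $C$ independent of $B$, which is $(RH_p)$.

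The step I expect to be the real obstacle is exactly the straddling case, i.e.\ keeping every constant independent of $r_B$. Two ingredients make it work: (i) the uniform bound $\fint_{B'}|u|\,d\mu\le C\fint_{2B}|u|\,d\mu$ for sub-balls $B'\subset\frac32 B$, which relies only on Harnack and doubling; and (ii) the convergence of the dyadic sum above, which is where the strict inequality $p<n$ and the reverse doubling $(RD_n)$ based at $x_M$ are genuinely needed — and which is the structural reason why stability of $(RH_p)$, hence of the Riesz transform, is limited to $p<n$.
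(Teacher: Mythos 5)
Your proof is correct and follows essentially the same route as the paper: cover $B$ by balls of radius comparable to $\min\{d(\cdot,M_0),r_B\}$, use the Harnack inequality to compare all the local averages of $|u|$ to $\fint_{2B}|u|\,d\mu$, invoke $(RH_{p,\loc})$ near $M_0$ and $(RH^E_p)$ away from it, and sum using $(RD_n)$ together with $p<n$ for convergence. The only difference is cosmetic: where you extract a Vitali bounded-overlap cover and sum dyadically, the paper instead bounds $\int_B\fint_{B_x}|\nabla u|^p\,d\mu\,d\mu(x)$ by $\int_B[1+d(x,x_M)]^{-p}d\mu(x)\,\big(\fint_{2B}|u|\big)^p$ and recovers $\int_B|\nabla u|^p$ by a Fubini averaging argument, using that $r_x$ and $r_y$ are comparable when $y\in B_x$; both implementations use the same sub-balls and the same inputs.
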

\begin{proof}
It is obvious that $(RH_p)$ implies $(RH_{p,\loc})$ and $(RH^E_p)$. Let us prove the converse side.

If $3B\cap M_0=\emptyset$, then $(RH_p)$ holds by $(RH^E_p)$,
and if $r_B\le 100$, then $(RH_p)$ holds by $(RH_{p,\loc})$.

Assume now $3B\cap M_0\neq\emptyset$ and $r_B>100$. For any $x\in  B$, we set $$r_x:=\max\{10,\min\{d(x,x_M)/10,\,{r_B}/{10}\}\}.$$

If $d(x,x_M)\le 100$, then $r_x=10$, and by applying $(RH_{p,\loc})$
to the ball $B_x:=B(x,r_x)$, we see that
\begin{eqnarray*}
\left(\fint_{B_x}|\nabla u|^p\,d\mu\right)^{1/p}&&\le \frac{C}{10}\fint_{2B_x}|u|\,d\mu\le \frac{C}{d(x,x_M)+1}\fint_{2B_x}|u|\,d\mu\\
&&\le \frac{C}{d(x,x_M)+1}\|u\|_{L^\infty(2B_x)}\le
 \frac{C}{d(x,x_M)+1}\fint_{2B}|u|\,d\mu,
 \end{eqnarray*}
 where the last inequality follows from the fact $2B_x\subset \frac{6}{5}B\subset 2B$ and Lemma \ref{harnack}.

If $d(x,x_M)>100$, then $r_x=\min\{d(x,x_M)/10,\,{r_B}/{10}\}$. Notice that since $\diam{M_0}=1$, $3B(x,r_x)\cap M_0=\emptyset$.
By applying $(RH^E_{p})$ to $B_x:=B(x,r_x)$,  we conclude via Lemma \ref{harnack} once more that
\begin{eqnarray*}
\left(\fint_{B_x}|\nabla u|^p\,d\mu\right)^{1/p}&&\le \frac{C}{r_x}\fint_{2B_x}|u|\,d\mu\le \frac{C}{r_x}\|u\|_{L^\infty(2B_x)}\le \frac{C}{r_x}\|u\|_{L^\infty(\frac {11}{10}B)} \le  \frac{C}{r_x}\fint_{2B}|u|\,d\mu.
\end{eqnarray*}
Noticing that $d(x,x_M)\le d(x,x_B)+d(x_B,x_M)<4r_B+1<5r_B$, and combining the above two estimates, we conclude that for each $x\in  B$, it holds
\begin{eqnarray}\label{rhp-rp1}
\fint_{B_x}|\nabla u|^p\,d\mu&&\le \frac{C}{[1+d(x,x_M)]^p}\left(\fint_{2B}|u|\,d\mu\right)^p.
\end{eqnarray}
Let $k_0=[\log_2 (5r_B)]$. Since $p<n$, by using $(RD_{n})$ and $(D_N)$, we have
\begin{eqnarray}\label{rhp-rp2}
&&\int_{B}\frac{1}{[1+d(x,x_M)]^{p}}\,d\mu(x) \le \int_{B(x_{M},5r_B)}\frac{1}{[1+d(x,x_M)]^{p}}\,d\mu(x)\nonumber\\
&&\quad\quad\le \sum_{k=0}^{-k_0}\int_{2^{k}B(x_{M},5r_B)\setminus 2^{k-1}B(x_{M},5r_B)}\frac{1}{[1+d(x,x_M)]^{p}}\,d\mu(x)+\int_{2^{-k_0}B(x_{M},5r_B)}\frac{1}{[1+d(x,x_M)]^{p}}\,d\mu(x)\nonumber\\
&&\quad\quad\le \sum_{k=0}^{-k_0}\frac{C}{[1+2^{k}r_B]^{p}}V(x_{M},2^{k}5r_B)+V(x_M,1)\nonumber\\
&&\quad\quad\le \sum_{k=0}^{-k_0}\frac{C}{[1+2^{k}r_B]^{p}} 2^{kn}V(x_{M},5r_B)+C2^{-k_0n}V(x_{M},5r_B)\nonumber\\
&&\quad\quad\le \sum_{k=0}^{-k_0}C2^{k(n-p)}r_B^{-p}V(x_{M},5r_B)+Cr_B^{-n}V(x_{M},5r_B)\nonumber\\
&&\quad\quad\le  C\mu(B)r_B^{-p}.
\end{eqnarray}

For each $x\in B$, and every $y\in B_x=B(x,r_x)$, one has
$$d(x,x_M)-r_x\le d(y,x_M)\le d(x,x_M)+r_x.$$
As a consequence of $d(x,x_M)>100$ and $r_B>100$, it holds $9r_x/10\le r_y\le 11r_x/10$, where
$r_y=\max\{10,\min\{d(y,x_M)/10,\,{r_B}/{10}\}\}$. This together with the doubling condition leads to
\begin{eqnarray*}
\int_B\fint_{B_x}|\nabla u(y)|^p\,d\mu(y)\,d\mu(x)&&=\int_B\int_{B}|\nabla u(y)|^p\frac{\chi_{ B(x,r_x)}(y)}{V(x,r_x)}\,d\mu(y)\,d\mu(x) \\
&&\ge C\int_B\int_{B}|\nabla u(y)|^p\frac{\chi_{ B(x,r_x)}(y)}{V(y,r_y)}\,d\mu(y)\,d\mu(x) \\
&&\ge C\int_B\int_{B}|\nabla u(y)|^p\frac{\chi_{ B(y,10r_y/11)}(x)}{V(y,r_y)}\,d\mu(x)\,d\mu(y) \\
&&\ge C\int_B\nabla u(y)|^p\,d\mu(y).
\end{eqnarray*}
This together with \eqref{rhp-rp1} and \eqref{rhp-rp2} gives that
\begin{eqnarray*}
\int_B |\nabla u(y)|^p\,d\mu(y) &&\le \int_B\fint_{B_x}|\nabla u(y)|^p\,d\mu(y)\,d\mu(x)\le \int_B\frac{C}{[1+d(x,x_M)]^p}\left(\fint_{2B}|u|\,d\mu\right)^p\,d\mu(x)\\
&&\le C \mu(B)r_B^{-p}\left(\fint_{2B}|u|\,d\mu\right)^p,
\end{eqnarray*}
which is nothing but $(RH_p)$.
\end{proof}

\begin{lem}\label{har-pend-ploc}
Assume that $(D_{N})$ holds on $M$  with $0< N<\infty$,  and that $(P^E_2)$ and $(UE)$ hold.
Let $p\in (2,\infty]$. If $(RH^E_p)$ holds on $M$, then $(RH_{p,\loc})$ holds on $M$.
\end{lem}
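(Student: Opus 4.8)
The plan is to dichotomize on whether $3B$ meets the compact core $M_0$. If $3B\cap M_0=\emptyset$, then the inequality required by $(RH_{p,\loc})$ for such a $B$ is precisely $(RH^E_p)$, with a constant that does not even depend on $r_0$, so there is nothing to prove. The whole content lies in the opposite case $3B\cap M_0\neq\emptyset$: here the constraint $r_B<r_0$ forces $3B$, and every ball $B(x,r_B/8)$ with $x\in B$, to sit inside the relatively compact open set $\Omega_{r_0}:=\{y\in M:\dist(y,M_0)<7r_0\}$, on the closure of which the metric is smooth and hence the Ricci curvature is bounded below, say $\mathrm{Ric}\ge -K(r_0)$. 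I will in fact prove the stronger $L^\infty$ bound $\sup_B|\nabla u|\le \frac{C(r_0)}{r_B}\fint_{2B}|u|\,d\mu$, which yields $(RH_{p,\loc})$ for every $p\in(2,\infty]$ simultaneously.

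To get this local gradient estimate I would combine four standard ingredients. First, the Bochner formula shows that for $u$ harmonic on $3B$ the function $w:=|\nabla u|^2$ satisfies, in the weak sense, $\mathcal{L}w\le 2K(r_0)\,w$ on $3B$, i.e.\ $w$ is a nonnegative subsolution of $\mathcal{L}-2K(r_0)$. Second, since $\mathrm{Ric}\ge -K(r_0)$ on $\Omega_{r_0}$, Bishop--Gromov and Buser's inequality supply volume doubling and an $L^2$-Poincar\'e inequality for all balls centred in $\Omega_{r_0}$ of radius $<r_0$, with constants depending only on $r_0$ and $\dim M$; Moser iteration then gives the mean value inequality $\sup_{B(x,s/2)}w\le \frac{C(r_0)}{V(x,s)}\int_{B(x,s)}w\,d\mu$ for such balls (the zeroth-order term only alters the constant). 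Applying this with $B(x,r_B/8)$, $x\in B$, and comparing volumes by doubling, I obtain $\sup_B|\nabla u|^2\le C(r_0)\fint_{\frac54 B}|\nabla u|^2\,d\mu$. Third, a Caccioppoli inequality on $\frac54 B\subset\frac32 B$ --- testing the harmonic equation against $u\phi^2$ with a cutoff $\phi$ equal to $1$ on $\frac54 B$, supported in $\frac32 B$, and $|\nabla\phi|\ls r_B^{-1}$ --- bounds this by $\frac{C}{r_B^2}\fint_{\frac32 B}|u|^2\,d\mu$. Fourth, Lemma \ref{harnack} applied to $u$ on the ball $2B$ (on which it is harmonic) with $\beta=3/4$ gives $\sup_{\frac32 B}|u|\le C\fint_{2B}|u|\,d\mu$. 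Chaining the four estimates produces $\sup_B|\nabla u|\le \frac{C(r_0)}{r_B}\fint_{2B}|u|\,d\mu$, as desired.

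The only genuinely substantive step is the second one: turning the Bochner lower bound into an $L^\infty$ gradient estimate via the mean value inequality for subsolutions. This is where one uses the local functional inequalities, which are available for the simple reason that $M_0$ is compact and the metric smooth, so a Ricci lower bound near $M_0$ holds regardless of any global hypothesis; the dependence of all constants on $r_0$ enters only through $K(r_0)$ and the attendant doubling and Poincar\'e constants, which is exactly the dependence permitted in the definition of $(RH_{p,\loc})$. Everything else --- the case split on $3B\cap M_0$, the Caccioppoli inequality, the Harnack inequality from Lemma \ref{harnack} --- is routine. Alternatively one could invoke interior elliptic regularity in harmonic coordinates on $\Omega_{r_0}$, where the geometry is bounded, but the Bochner--Moser argument keeps the proof within the analytic framework used elsewhere in the paper.
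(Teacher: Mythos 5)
Your proposal is correct, and it reaches the same $L^\infty$ gradient bound $\sup_B|\nabla u|\le C(r_0)r_B^{-1}\fint_{2B}|u|\,d\mu$ that the paper establishes, but by a genuinely different route. The paper applies Yau's pointwise gradient estimate $\sup_{B}\frac{|\nabla u|}{u}\le C\bigl(\frac{1}{r_B}+\sqrt{K(r_0)}\bigr)$ to the shifted positive harmonic function $u+\|u\|_{L^\infty(\frac32B)}+\varepsilon$, then sends $\varepsilon\to0$ and invokes Lemma~\ref{harnack} to convert the sup-norm into an $L^1$-average; your argument replaces this single black-box estimate by a concatenation of elementary ingredients: Bochner to show $w=|\nabla u|^2$ is a weak subsolution of $\mathcal{L}-2K(r_0)$, Moser iteration (whose local doubling and Poincar\'e inputs near $M_0$ come from Bishop--Gromov and Buser, exactly as in Lemma~\ref{pend-ploc}) to obtain a mean-value inequality for $w$, Caccioppoli to descend from $\fint|\nabla u|^2$ to $r_B^{-2}\fint|u|^2$, and Lemma~\ref{harnack} to close. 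Yours sidesteps the positivity shift needed to invoke Yau, and it is somewhat more modular (the Moser machinery for subsolutions with a zeroth-order term is robust, and the factor $K(r_0)r^2\le K(r_0)r_0^2$ is harmless, giving constants depending only on $r_0$); the paper's is shorter since Yau's estimate already packages the $L^\infty$ bound. One small bookkeeping point: your case split is on $3B\cap M_0=\emptyset$ or not, whereas the paper splits on $\dist(x_B,M_0)\gtrless 3r_0$; these are interchangeable here (when $3B$ meets $M_0$ and $r_B<r_0$, all of $3B$ sits within distance $6r_0$ of $M_0$, so the local Ricci bound is available), and both funnel the far-from-$M_0$ case to $(RH^E_p)$ verbatim.
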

\begin{proof}
The proof is similar to that of Lemma \ref{pend-ploc},  by using Yau's gradient estimates for harmonic functions.

Recall that Yau's gradient estimate states that if $u$ is a harmonic function on $2B$, then it holds
$$\sup_{x\in B}\frac{|\nabla u(x)|}{u(x)}\le C(N)\left(\frac{1}{r_B}+\sqrt{K}\right),$$
where $K\ge 0$, if every point in $2B$ has Ricci curvature not less than $-K$; see \cite{chy,ya75}.

For any $r_0>0$,  the Ricci curvature  on the set $\{x\in M:\, \dist (x,M_0)<6r_0\}$ is bounded below by a constant $-K(r_0)$ depending on $r_0$, $K(r_0)\ge 0$.
Suppose that $u$ is a harmonic function on $3B$, where $B=B(x,r)$ with $r<r_0$.

If $\dist(x,M_0)\le 3r_0$, then for an arbitrary $\ez>0$, applying the pointwise Yau's gradient estimate to $u+\|u\|_{L^\infty(\frac 32B)}+\ez$, one has for each $y\in B$
\begin{eqnarray*}
|\nabla u(y)|&&\le C\left(u+\|u\|_{L^\infty(\frac 32B)}+\ez\right)\left(\frac{1}{r}+\sqrt{K(r_0)}\right) \le \frac{C(r_0)}{r} \left(\|u\|_{L^\infty(\frac 32B)}+\ez\right).
\end{eqnarray*}
By Lemma \ref{harnack} and letting $\ez\to 0$, we see that
\begin{eqnarray*}
|\nabla u(x)|&&\le \frac{C(r_0)}{r}\fint_{2B}|u|\,d\mu,
\end{eqnarray*}
and hence,
\begin{eqnarray*}
\left(\fint_B |\nabla u|^p\,d\mu\right)^{1/p}&&\le \frac{C(r_0)}{r}\fint_{2B}|u|\,d\mu.
\end{eqnarray*}
If $\dist(x,M_0)> 3r_0$, then $B(x,3r)\cap M_0=\emptyset$ for any $r<r_0$. By using $(RH^E_p)$, one sees that
\begin{eqnarray*}
\left(\fint_B |\nabla u|^p\,d\mu\right)^{1/p}&&\le \frac{C}{r}\fint_{2B}|u|\,d\mu,
\end{eqnarray*}
as desired.
\end{proof}

The above lemma leads to the following open-ended character  of  condition $(RH_p)$ for $p<n$.
\begin{lem}\label{lem-open}
Assume that $(D_{N})$ and $(RD_n)$ hold on $M$  with $2<n\le N<\infty$,  and that $(UE)$ and $(P^E_2)$ hold.
Let $p\in (2,n)$. Then if $(RH_p)$ holds, there exists $\epsilon>0$ such that $p+\epsilon<n$ and $(RH_{p+\epsilon})$ holds.
\end{lem}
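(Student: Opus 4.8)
The strategy is to reduce the whole statement to a self-improvement of $(RH^E_p)$ alone, and then to quote the machinery already established. First I would apply Lemma \ref{stable-rhp} and Lemma \ref{har-pend-ploc} \emph{with the exponent $p+\epsilon$} (which will lie in $(2,n)$): by Lemma \ref{har-pend-ploc}, $(RH^E_{p+\epsilon})$ implies $(RH_{p+\epsilon,\loc})$, and by Lemma \ref{stable-rhp} the conjunction of $(RH_{p+\epsilon,\loc})$ and $(RH^E_{p+\epsilon})$ is exactly $(RH_{p+\epsilon})$. Hence it suffices to prove that, under $(D_N)$, $(RD_n)$, $(UE)$ and $(P^E_2)$, the hypothesis $(RH^E_p)$ implies $(RH^E_{p+\epsilon})$ for some $\epsilon\in(0,n-p)$.

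The decisive point is that the right-hand side of $(RH^E_p)$ is an average of $u$ rather than of $\nabla u$, so it is not directly in classical reverse-Hölder form; the Poincar\'e inequality on ends is what repairs this. Concretely, let $B$ be a ball with $3B\cap M_0=\emptyset$ and $u$ harmonic on $3B$. Applying $(RH^E_p)$ to the harmonic function $u-u_{2B}$, then H\"older's inequality, then $(P^E_2)$ on $2B$ (legitimate since $2B\cap M_0=\emptyset$), one obtains, with $C$ depending only on the structural constants,
\[
\left(\fint_{B}|\nabla u|^p\,d\mu\right)^{1/p}\le\frac{C}{r_B}\fint_{2B}|u-u_{2B}|\,d\mu\le\frac{C}{r_B}\left(\fint_{2B}|u-u_{2B}|^2\,d\mu\right)^{1/2}\le C\left(\fint_{2B}|\nabla u|^2\,d\mu\right)^{1/2}.
\]
For a fixed harmonic $u$ on a ball whose triple misses $M_0$, this is a genuine reverse H\"older inequality for $g:=|\nabla u|$ with exponents $2<p$, valid on all the relevant subballs. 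By the doubling property $(D)$ one may then invoke Gehring's self-improvement lemma (in the form valid on doubling metric measure spaces): there exist $\epsilon>0$ and $C'>0$, depending only on $p$, $C_D$ and the constant above — and $\epsilon$ may be shrunk at will, so that $p+\epsilon<n$ — such that, for some fixed dilation constant $\Lambda>1$, whenever $\Lambda B\cap M_0=\emptyset$ and $u$ is harmonic on $\Lambda B$,
\[
\left(\fint_{B}|\nabla u|^{p+\epsilon}\,d\mu\right)^{1/(p+\epsilon)}\le C'\left(\fint_{2B}|\nabla u|^2\,d\mu\right)^{1/2}.
\]

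It remains to convert this back into the stated form of $(RH^E_{p+\epsilon})$. I would bound the right-hand side by the standard Caccioppoli inequality for harmonic functions, $\fint_{2B}|\nabla u|^2\,d\mu\le C r_B^{-2}\fint_{\frac52 B}|u-u_{\frac52 B}|^2\,d\mu$, then dominate the oscillation of $u$ on $\tfrac52 B$ by a multiple of $\fint_{B'}|u|\,d\mu$ over a slightly larger ball $B'$ on which $u$ is still harmonic, using the $L^\infty$--$L^1$ estimate of Lemma \ref{harnack}; finally a covering argument (cover $B$ by boundedly many balls of radius $\sim r_B$, using $(D)$) absorbs all the dilation constants and restores the exact balls $B$, $2B$, $3B$ of $(RH^E_{p+\epsilon})$. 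Feeding the resulting $(RH^E_{p+\epsilon})$ into Lemma \ref{har-pend-ploc} and Lemma \ref{stable-rhp} (both with exponent $p+\epsilon\in(2,n)$) yields $(R H_{p+\epsilon})$, which is the claim.

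The only genuinely delicate step is the one in the second paragraph: passing from the hypothesis, whose right-hand side is an average of $u$, to a reverse H\"older inequality in Gehring form. This is precisely what $(P^E_2)$ buys, and it is also the structural reason the ``local part / ends part'' splitting of Lemma \ref{stable-rhp} is used, since no scale-invariant Poincar\'e inequality is assumed across $M_0$. The conversion back (Caccioppoli, Harnack, covering) and the bookkeeping of dilation constants through Gehring's lemma are routine.
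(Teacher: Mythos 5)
Your proposal matches the paper's argument essentially step for step: localize to balls missing $M_0$, combine the hypothesis (applied to $u-u_{2B}$) with H\"older and $(P^E_2)$ to obtain the Gehring-form reverse H\"older inequality $\bigl(\fint_B|\nabla u|^p\bigr)^{1/p}\le C\bigl(\fint_{2B}|\nabla u|^2\bigr)^{1/2}$ on all sub-balls whose triples miss $M_0$, self-improve via Gehring's lemma (shrinking $\epsilon$ so that $p+\epsilon<n$), convert back to $(RH^E_{p+\epsilon})$ via Caccioppoli, Lemma~\ref{harnack} and a covering argument, and finally patch in the local part via Lemma~\ref{har-pend-ploc} and glue with Lemma~\ref{stable-rhp}. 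This is exactly the structure of the paper's proof of Lemma~\ref{lem-open}, with your version being somewhat more explicit about the bookkeeping of dilation constants.
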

\begin{proof}
For each ball $B=B(x_B,r_B)$ with $3B\cap M_0=\emptyset$, we  can find via the $\epsilon$-net argument (cf. \cite[p.102]{hkst})
a sequence of balls $\{B_i\}_{1\le i\le k_0}$, where $1\le k_0\le 8^{N_\mu}$ and $N_\mu>0$ depending only on
the measure $\mu$ (see the proof of Proposition \ref{exp-poincare}),   such that each $B_i$
has radius of $r_B/4$, $\frac 12 B_i\cap \frac 12B_j=\emptyset$ whenever $i\neq j$,
 and $\cup_{1\le i\le k_0}\frac 12B_i\subset B\subset \cup_{1\le i\le k_0} B_i$.

As $B_i$ has radius of $r_B/4$, $\frac 12B_i\subset B$ and $3B\cap M_0=\emptyset$, we find that
$4B_i\subset 2B$ and $4B_i\cap M_0=\emptyset$.

Let $v$ be a harmonic function on $3B$.  Using $(P^E_2)$,  one can conclude from $(RH_p)$ that for each $1\le i\le k_0$, it holds
$$\left(\fint_{B_i}|\nabla v|^p\,d\mu\right)^{1/p}\le \frac{C}{r_B}\left(\fint_{2B_i}|v-v_{2B_i}|^2\,d\mu\right)^{1/2} \le C\left(\fint_{2B_i}|\nabla v|^2\,d\mu\right)^{1/2}.$$
Moreover, this estimate holds for any sub-ball $\tilde B_i$ with $2\tilde B_i\subset 2B_i$, since $4\tilde B_i\cap M_0=\emptyset$.
Applying Gehring's lemma (cf. \cite{ge73}) and self-improvements of the reverse H\"older inequality (cf. \cite[Appendix]{bcf14}), we see that it holds for some $\epsilon>0$,
\begin{equation*}
\left(\fint_{B_i}|\nabla v|^{p+\epsilon}\,d\mu\right)^{1/(p+\epsilon)}\le C\left(\fint_{2B_i}|\nabla v|^2\,d\mu\right)^{1/2}\le
\frac {C }{r_B} \fint_{4B_i}|v|\,d\mu\le \frac {C }{r_B} \fint_{2B}|v|\,d\mu,
\end{equation*}
where the second inequality follows from $(RH_p)$, and hence,
\begin{equation*}
\left(\fint_{B}|\nabla v|^{p+\epsilon}\,d\mu\right)^{1/(p+\epsilon)}\le \sum_{i=1}^{k_0}\frac{\mu(B_i)^{1/(p+\epsilon)}}{\mu(B)^{1/(p+\epsilon)}}\left(\fint_{B_i}|\nabla v|^{p+\epsilon}\,d\mu\right)^{1/(p+\epsilon)}\le \frac {C }{r_B} \fint_{2B}|v|\,d\mu,
\end{equation*}
i.e., $(RH_{p+\epsilon}^E)$ holds. This together with Lemma \ref{stable-rhp} and Lemma \ref{har-pend-ploc} completes the proof.
\end{proof}

We can now finish the proof of Theorem \ref{main-n}.
\begin{proof}[Proof of Theorem \ref{main-n}]
Since in our setting, $(D)$ and $(UE)$ hold,  and $(P_{2,\loc})$ follows from $(P^E_2)$ by Lemma \ref{pend-ploc}, we
see that $(RH_p)\Leftrightarrow (G_p)$ holds for any $p\in (2,\infty)$
by \cite[Theorem 1.5]{cjks16}. The implication
$(R_p)\Rightarrow (G_p)$ holds automatically by the analyticity on $L^p$ of the heat semigroup; see \cite{acdh} for instance.

Finally, if  $(RH_p)$ holds, then by Lemma \ref{lem-open}
there exists $\epsilon>0$ such that $p+\epsilon<n$ and $(RH_{p+\epsilon})$ holds. This implies $(G_{p+\epsilon})$, which by Theorem \ref{main-weak-p} gives $(R_q)$ for all $q\in (2,p+\epsilon)$, in particular, $(R_p)$. The proof is complete.
\end{proof}

Theorem \ref{main} follows from Lemma \ref{stable-rhp} and Theorem \ref{main-n}.
\begin{proof}[Proof of Theorem \ref{main}]
By Theorem \ref{main-n}, $(R_p)$ is equivalent to $(RH_p)$. By Lemma \ref{stable-rhp} and Lemma \ref{har-pend-ploc}
one sees that $(RH_p)$ is equivalent to $(RH_p^E)$.
\end{proof}

\begin{proof}[Proof of Corollary \ref{cor-open}]
This corollary follows from Theorem \ref{main} and Lemma \ref{lem-open}.
\end{proof}

\begin{lem}\label{asy-yau}
Assume that $(D_{N})$ holds on $M$  with $0<N<\infty$, and that $(QD)$ and $(UE)$ hold.
Then $(RH_\infty^E)$ holds.
\end{lem}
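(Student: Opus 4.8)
The plan is to deduce $(RH^E_\infty)$ directly from Yau's gradient estimate, following the strategy of the proof of Lemma~\ref{har-pend-ploc}; the decisive new point is that the quadratic decay $(QD)$ makes Yau's estimate \emph{scale invariant} on the ends, so that there is no need to restrict to small balls.

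First I would fix a ball $B=B(x_B,r_B)$ with $3B\cap M_0=\emptyset$ and a function $u$ with $\mathcal{L}u=0$ on $3B$. Since $x_M\in M_0$, the hypothesis $3B\cap M_0=\emptyset$ forces $d(x_B,x_M)\ge 3r_B$. Consequently, for every $y\in \frac 32B$ we have $d(y,x_M)\ge d(x_B,x_M)-\frac 32 r_B\ge \frac 32 r_B$, so by $(QD)$,
$$Ric_M(y)\ge -\frac{C_M}{[d(y,x_M)+1]^2}\ge -\frac{C_M}{(\tfrac 32 r_B)^2};$$
that is, every point of $\frac 32B$ has Ricci curvature at least $-K$ with $\sqrt K\le \frac{2\sqrt{C_M}}{3r_B}$.

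Next, for $x\in B$ and $\ez>0$, I would apply Yau's gradient estimate (as recalled in the proof of Lemma~\ref{har-pend-ploc}) to the positive harmonic function $w:=u+\|u\|_{L^\infty(\frac 32B)}+\ez$ on the ball $B(x,r_B/4)$, whose double $B(x,r_B/2)$ is contained in $\frac 32B\subset 3B$. Since the Ricci curvature on $B(x,r_B/2)$ is bounded below by $-K$ with $\sqrt K\le \frac{2\sqrt{C_M}}{3r_B}$, evaluating the estimate at the centre $x$ and using $\nabla w=\nabla u$ gives
$$|\nabla u(x)|\le C(N)\,w(x)\left(\frac{4}{r_B}+\frac{2\sqrt{C_M}}{3r_B}\right)\le \frac{C(N,C_M)}{r_B}\Big(2\|u\|_{L^\infty(\frac 32B)}+\ez\Big).$$
Letting $\ez\to 0$ and taking the supremum over $x\in B$ gives $\|\nabla u\|_{L^\infty(B)}\le \frac{C}{r_B}\|u\|_{L^\infty(\frac 32B)}$. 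Finally, since $(D_N)$ and $(UE)$ hold, the Harnack inequality of Lemma~\ref{harnack}, applied on $2B=B(x_B,2r_B)\subset 3B$ with $\beta=3/4$, gives $\|u\|_{L^\infty(\frac 32B)}\le C\fint_{2B}|u|\,d\mu$. Combining the last two bounds yields $\|\nabla u\|_{L^\infty(B)}\le \frac{C}{r_B}\fint_{2B}|u|\,d\mu$, which is exactly $(RH^E_\infty)$. The argument is essentially routine once the right radii are chosen; the only delicate part is the bookkeeping that keeps every ball on which $u$ or its Ricci curvature is used inside $3B$, and I do not anticipate any genuine obstacle, since all the content lies in the scale-invariant form of Yau's estimate furnished by $(QD)$.
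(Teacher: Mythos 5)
Your proof is correct and follows essentially the same route as the paper: apply Yau's pointwise gradient estimate to the positive harmonic function $u+\|u\|_{L^\infty(\frac 32 B)}+\varepsilon$, observe that $(QD)$ together with $3B\cap M_0=\emptyset$ gives a Ricci lower bound $-K$ with $\sqrt{K}\lesssim 1/r_B$ on the relevant ball, let $\varepsilon\to 0$, and finish with the Harnack inequality of Lemma~\ref{harnack}. The only difference is that you track the radii more carefully (working on $B(x,r_B/4)$ with double $B(x,r_B/2)\subset\frac 32 B$), whereas the paper applies the estimate on $B$ directly; both versions are valid.
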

\begin{proof}
Suppose that $u$ is a harmonic function on $3B$, with $3B\cap M_0=\emptyset$.
For an arbitrary $\ez>0$, applying the pointwise Yau's gradient estimate (see the proof of Lemma \ref{har-pend-ploc}) to $u+\|u\|_{L^\infty(\frac 32B)}+\ez$, one has for each $x\in B$
\begin{eqnarray*}
|\nabla u(x)|&&\le C\left(u+\|u\|_{L^\infty(\frac 32B)}+\ez\right)\left(\frac{1}{r_B}+\frac{C_M}{r_B+1}\right)
\le \frac{C}{r_B} \left(\|u\|_{L^\infty(\frac 32B)}+\ez\right)\le \frac{C}{r_B}\fint_{2B}(|u|+\epsilon)\,d\mu.
\end{eqnarray*}
Above in the last step we used Lemma \ref{harnack}. Letting $\ez\to 0$, we see that  $(RH^E_\infty)$ holds.
\end{proof}

\begin{proof}[Proof of Corollary \ref{main-1}]
By \cite{cd99} it is known that under $(D)$ and $(UE)$, $(R_p)$ holds for all $p\in (1,2]$.

Notice that under $(QD)$, $(P^E_2)$ holds by Theorem \ref{asy-poin}, and $(RH^E_\infty)$ holds by Lemma \ref{asy-yau}.
Since $(RH^E_\infty)$ implies $(RH_p^E)$  for all $p\in (2,\infty)$, we see that $(R_p)$ holds for all $p\in (2,n)$ by Theorem \ref{main}.
\end{proof}
%\begin{proof}[Proof of Corollary \ref{cor-alx}]
%Notice that the assumptions of $(P_{2,\loc})$, $(P^E_2)$, $(RH_{\infty,\loc})$ and $(RH^E_\infty)$  from  Theorem \ref{main}
%are fully satisfied on Alexandrov spaces which has non-negative curvature outside a compact set $M_0$, by Zhang-Zhu \cite{zz11}.
%
%Notice that in the proof of Theorem \ref{main}, the smooth structure was only used in the Poincar\'e inequality,
%and using $C^\infty_c(M)$ as a dense subset of $L^p(M)$. In the non-smooth setting, one can use Lipschitz function and more generally
%Sobolev function in the Poincar\'e inequality. The question of dense subset of $L^p(M)$ that allow the action of $\mathcal{L}^{-1/2}$
%and $\nabla \mathcal{L}^{-1/2}$ was already addressed in \cite[Section 5]{jlz16}.
%\end{proof}

\section{Riesz transform for $p$ above the upper dimension}\label{sec-riesz-har}
\hskip\parindent In this section, we provide the proofs for Theorem \ref{main-one}  and Corollary \ref{main-p2-negative}.
The ideas employed come from recent developments of the elliptic theory for heat kernels from \cite{bcf14,cjks16}.

\begin{proof}[Proof of Theorem \ref{main-one}]
By Theorem \ref{n-poin}, we see that $(P_p)$ holds for any $p>N\vee 2$.
Since our measure is doubling, and $(UE)$ holds, we can apply \cite[Theorem 1.9]{cjks16}  to show that
the conditions $(R_p)$, $(RH_p)$ and $(G_p)$ are equivalent.

Notice that, by \cite{cd99}, $(D)$ and $(UE)$ implies that $(R_q)$ holds for all $q\in (1,2)$.
Since $p>N$, $M$ is $p$-parabolic. By \cite[Theorem C]{ca16} and the assumption that $M$ is non-parabolic,
$(R_p)$ together with $(R_{\frac{p}{p-1}})$ implies that $M$ can have only one end.
\end{proof}
\begin{rem}\rm \label{remark-p2}
Notice that $(P_p)$ together with $(G_p)$ implies $(P_2)$ by \cite{bcf14}. One may also use $(P_2)$ to show that
there exists only one end if $n>2$.
\end{rem}

\begin{proof}[Proof of Corollary \ref{main-p2-negative}]
Suppose that there exists a non-constant harmonic function $u$ on $M$ with the growth
$$u(x)=\mathcal{O}(d(x,o)^\alpha)\ \mbox{as}\, d(x,o)\to\infty$$
for some $\alpha\in [0,1)$ and a fixed $o\in M$.

Assume first $\alpha=0$. If $(R_p)$ holds for some $p>N\vee2$, then by Theorem \ref{main-one}
we have $(RH_p)$, which implies that for all balls $B$ with $r_B>1$ and harmonic function $v$ in $3B$ it holds
\begin{equation}\label{rhc}
\left(\int_{B}|\nabla v|^p\,d\mu\right)^{1/p}\le \frac {C\mu(B)^{1/p}}{r_B} \fint_{2B}|v|\,d\mu\le \frac {CV(x_B,1)^{1/p}r_B^{N/p}}{r_B} \fint_{2B}|v|\,d\mu.
\end{equation}
Applying this estimate to $u$ and letting the radius of $B$ tend to infinity, we see that $\||\nabla u|\|_{p}=0$, which cannot be true.
Therefore the Riesz transform is not  bounded on $L^p(M)$ for any $p>N\vee 2$.

Assume now $\alpha\in (0,1)$. Suppose first that $\frac{N}{1-\alpha}\le 2$. Notice that it implies $N<2$.
Assume $(R_p)$ holds for some $p>2$. Then the estimate \eqref{rhc} holds for $u$, which further implies that
 \begin{equation*}
\left(\int_{B}|\nabla u|^p\,d\mu\right)^{1/p}\le \frac {C\mu(B)^{1/p}}{r_B} \fint_{2B}|u|\,d\mu\le {CV(x_B,1)^{1/p}r_B^{N/p+\alpha-1}}\to 0,
\end{equation*}
as $r_B\to\infty$, since $p>2\ge \frac{N}{1-\alpha}$. This implies $\||\nabla u|\|_p=0$ which contradicts
with $u$ being non-constant. Therefore, $(R_p)$ does not hold for any $p>2$.

For the cases $\frac{N}{1-\alpha}> 2$, we only need to show that the Riesz transform is not  bounded on $L^p(M)$ for $p=\frac{N}{1-\alpha}$.
Suppose this is not the case. By the validity of $(P_p)$ from Theorem \ref{n-poin} and $(R_p)$, we apply
\cite[Corollary 1.10]{cjks16} to find  that there exists $\epsilon>0$ such that
for each  $v$ satisfying $\mathcal{L} v=0$ in $3B$, it holds
$$\left(\fint_{B}|\nabla v|^{p+\epsilon}\,d\mu\right)^{1/(p+\epsilon)}\le
\frac {C}{r_B} \fint_{2B}|v|\,d\mu.$$
This gives
$$\left(\int_{B}|\nabla v|^{p+\epsilon}\,d\mu\right)^{1/(p+\epsilon)}\le \frac {C\mu(B)^{1/(p+\epsilon)}}{r_B} \fint_{2B}|v|\,d\mu.$$
Applying this estimate to $u$ and using $(D_{ N})$, we conclude that
$$\left(\int_{B}|\nabla u|^{p+\epsilon}\,d\mu\right)^{1/(p+\epsilon)}\le  CV(x_B,1)^{1/(p+\epsilon)}r_B^{N/(p+\epsilon)-1+\alpha}\sim r_B^{\frac{N(1-\alpha)}{N+\epsilon(1-\alpha)}-1+\alpha}\to 0,$$
as  $r_B\to \infty$. This contradicts with $u$ being non-constant.  Therefore,
the Riesz transform cannot be  bounded on $L^p(M)$ for $p=\frac{N}{1-\alpha}$. The proof is completed.
\end{proof}

Carron \cite[Theorem D \& Proposition E]{ca16} had provided some sufficient conditions for both boundedness
 and unboundedness of the Riesz transform for $p>N$, under the requirement of quadratic Ricci curvature decay $(QD)$.
As an application of our criteria above, we can relax the requirement of Ricci curvature bound from \cite{ca16} to
$(P^E_2)$ and $(RH^E_\infty)$ (see Lemma \ref{asy-poin} and Lemma \ref{asy-yau}),
and  show that his condition $(HE_\alpha)$ is also necessary, if $n=N$.
\begin{thm}\label{main-p2}
Assume that $(D_{N})$ and $(RD_n)$ hold on $M$  with $1<n=N<\infty$.
Suppose that $(UE)$, $(P^E_2)$ and $(RH^E_\infty)$ hold.
 Let $p\in (N\vee 2,\infty)$. Then the following statements are equivalent.

(i) $(R_p)$ holds;

(ii) $(RH_p)$ holds;

(iii) $(HE_\alpha)$ holds for some $\alpha\in (1-\frac{N}{p},1]$, i.e.,  there exists $C>0$
such that for any ball $B\subset M$ and any harmonic function $u$ on $3B$, it holds for any $x,y\in B$ that
$$|u(x)-u(y)|\le C\left(\frac{d(x,y)}{r_B}\right)^\alpha\fint_{2B}|u|\,d\mu.\leqno(HE_\alpha)$$
\end{thm}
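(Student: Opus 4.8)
The plan is to close the chain of implications $(i)\Rightarrow(ii)\Rightarrow(iii)\Rightarrow(i)$. The equivalence $(i)\Leftrightarrow(ii)$ is already available from Theorem~\ref{main-one}: under $(D_N)$ together with $(UE)$ and $(P^E_2)$, Theorem~\ref{n-poin} gives $(P_p)$ for every $p>N\vee2$, and then \cite[Theorem 1.9]{cjks16} yields $(R_p)\Leftrightarrow(RH_p)\Leftrightarrow(G_p)$. So the real content is the equivalence of these with $(HE_\alpha)$ for a suitable H\"older exponent $\alpha\in(1-\frac Np,1]$, and this is where the hypothesis $n=N$ and the extra conditions $(RH^E_\infty)$, $(P^E_2)$ enter.

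First I would prove $(iii)\Rightarrow(ii)$. Fix a ball $B$ and a harmonic function $u$ on $3B$. Decompose into the local regime and the regime near the ends. For balls with $2B\cap M_0=\emptyset$ one can apply a Caccioppoli inequality (as in Lemma~\ref{lem-open}, using $(P^E_2)$) to pass from the pointwise oscillation control $(HE_\alpha)$ to an $L^2$-gradient bound on sub-balls, and then Gehring self-improvement gives $(RH^E_{p'})$ for some $p'>p$; combined with $(RH^E_\infty)$ (which handles arbitrarily large $p$ directly via Yau's gradient estimate on the ends, cf. Lemma~\ref{asy-yau}) and $(RH_{p,\loc})$ (Lemma~\ref{har-pend-ploc}), Lemma~\ref{stable-rhp} glues these into $(RH_p)$. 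Actually the cleaner route: since $(RH^E_\infty)$ is assumed, on the ends the full strength $(RH^E_p)$ is automatic, so I only need $(HE_\alpha)$ to feed the \emph{transition region} where balls straddle $M_0$, exactly the telescoping/chaining argument from the proof of Lemma~\ref{stable-rhp}, where the decay exponent $p<n=N$ is used to sum the geometric series $\int_B [1+d(x,x_M)]^{-p}\,d\mu\lesssim\mu(B)r_B^{-p}$. The hypothesis $n=N$ is what makes the threshold $p>N$ align with the decay rate needed here.

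Next, $(ii)\Rightarrow(iii)$. This is the more delicate direction and I expect it to be the main obstacle. Given $(RH_p)$, I want to recover the pointwise H\"older estimate $(HE_\alpha)$ with $\alpha>1-\frac Np$. On a fixed ball $B$ and harmonic $u$ on $3B$: for $x,y\in B$, connect them by a chain of balls $B_j$ of radii comparable to $d(x,y)$, telescoping through scales; on each ball apply $(RH_p)$ plus the Morrey-type embedding that upgrades an $L^p$-gradient bound with $p>N=$ (homogeneous dimension) to a H\"older modulus of continuity with exponent $1-\frac Np$. The growth condition $V(x,R)/V(x,r)\lesssim (R/r)^N$ from $(D_N)$ makes this Morrey embedding work with exponent exactly $1-\frac Np$, and $(RH_p)$ supplies the scale-invariant gradient bound $(\fint_{B}|\nabla u|^p)^{1/p}\lesssim r_B^{-1}\fint_{2B}|u|$ needed to run it. Some care is required near $M_0$, where the geometry is non-doubling in the fine sense ($\upsilon<n$ possible), but since $M_0$ is compact and $(RH_{p,\loc})$ plus interior elliptic regularity handle the bounded part, the dangerous case is again large balls, controlled as in Lemma~\ref{stable-rhp}.

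Finally I would note, following \cite[Theorem D, Proposition E]{ca16}, that $(HE_\alpha)$ is Carron's condition and that under $n=N$ the exponent range $\alpha\in(1-\frac Np,1]$ is precisely the one his characterization requires, so the theorem is the natural sharpening of his result with the curvature hypothesis replaced by $(P^E_2)+(RH^E_\infty)$. The hard part will be the quantitative bookkeeping of exponents in $(ii)\Leftrightarrow(iii)$: one must verify that the $\epsilon$-improvement coming from Gehring (via \cite[Corollary 1.10]{cjks16}) and the Morrey exponent $1-\frac Np$ fit together so that \emph{some} admissible $\alpha$ in the open interval $(1-\frac Np,1]$ is produced, rather than just the endpoint.
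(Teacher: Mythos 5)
Your overall architecture matches the paper's: $(i)\Leftrightarrow(ii)$ from Theorem~\ref{main-one}, then a two-sided argument between $(RH_p)$ and $(HE_\alpha)$, with Gehring-type self-improvement (via \cite[Corollary 1.10]{cjks16}, using the $(P_{p+\epsilon})$ from Theorem~\ref{n-poin}) needed precisely to land $\alpha$ strictly inside the open interval $(1-\frac Np,1]$ rather than at the endpoint. That part of your sketch for $(ii)\Rightarrow(iii)$ is right; the paper's proof for large balls is exactly the telescopic argument with $(P_{p+\epsilon})$, $(RH_{p+\epsilon})$, $(D_N)$ and the Harnack inequality, yielding $\alpha=1-\frac{N}{p+\epsilon}$, while small balls and ends-balls get $\alpha=1$ from $(RH_{\infty,\loc})$ and $(RH^E_\infty)$.

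The gap is in $(iii)\Rightarrow(ii)$. You propose to reuse the decay integral from Lemma~\ref{stable-rhp}, namely $\int_B[1+d(x,x_M)]^{-p}\,d\mu\lesssim\mu(B)r_B^{-p}$, and you even write down the hypothesis ``$p<n=N$'' under which that geometric series was summed. But Theorem~\ref{main-p2} is stated for $p>N\vee 2$, so this integral with exponent $p$ does not close. What saves the argument, and what you have not located, is the extra decay supplied by $(HE_\alpha)$ itself: on a ball $B_x$ of radius $r_x\sim d(x,x_M)\wedge r_B$ one applies $(RH^E_\infty)$ (or $(RH_{\infty,\loc})$ near $M_0$) not to $u$ but to $u-u_{2B_x}$, and then uses $(HE_\alpha)$ to control the mean oscillation $\fint_{2B_x}\fint_{2B_x}|u(y)-u(z)|\,d\mu\,d\mu$ by $(r_x/r_B)^\alpha\fint_{2B}|u|\,d\mu$. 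This yields the pointwise bound $|\nabla u(x)|\lesssim[1+d(x,x_M)]^{\alpha-1}r_B^{-\alpha}\fint_{2B}|u|\,d\mu$, so the integral to sum is $r_B^{-p\alpha}\int_B[1+d(x,x_M)]^{-(1-\alpha)p}\,d\mu$, which converges to $\lesssim\mu(B)r_B^{-p}$ precisely because $(1-\alpha)p<N=n$, i.e.\ because $\alpha>1-\frac Np$. Without this reduction of the exponent from $p$ to $(1-\alpha)p$ the dangerous integral diverges, so the hypothesis $\alpha>1-\frac Np$ plays no role in your proposal while it is the load-bearing inequality in the paper's Step~1. Relatedly, your first suggested route for $(iii)\Rightarrow(ii)$ on the ends (Caccioppoli from $(HE_\alpha)$ plus Gehring) cannot reach arbitrary $p>N$: Gehring gives only a small self-improvement of the $L^2$ reverse H\"older exponent, not an $L^\infty$ bound. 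The reason for assuming $(RH^E_\infty)$ is that it already supplies the $L^\infty$ gradient estimate on the ends, and $(HE_\alpha)$ is used only to shrink the numerator in the transition region, not to manufacture $(RH^E_p)$ from scratch.
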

\begin{proof}
The equivalence $(R_p)$ and $(RH_p)$ is a special case of Theorem \ref{main-one}.
Let us show that $(RH_p)\Leftrightarrow(HE_{\alpha})$.

{\bf Step 1. $(HE_\alpha)\Rightarrow (RH_p)$.}
The case $\alpha=1$ is easy, since $(HE_\alpha)$ implies that for any $x\in B$,
$$|\nabla u(x)|\le \limsup_{y:\,d(x,y)\to 0}\frac{|u(x)-u(y)|}{d(x,y)}\le \frac{C}{r_B}\fint_{2B}|u|\,d\mu,$$
which is $(RH_\infty)$.
%
%This then implies together with $(P_2)$ implies the Riesz transform is bounded on $L^p(M)$ for all $p\in (1,\infty)$;
%see \cite[Theorem 1.9]{cjks16}.

Suppose now $\alpha\in (1-N/p,1)$. Recall that $p>N\vee 2$.  If $3B\cap M_0=\emptyset$, then $(RH_\infty)$ holds on $B$ by $(RH^E_\infty)$.
If $r_B\le 100$, then  by applying  $(RH_{\infty,\loc})$ from Lemma \ref{har-pend-ploc},
one has that for $u$ satisfying $\mathcal{L}u=0$ on $3B$, it holds
$$\|\nabla u\|_{L^\infty(B)}\le \frac{C}{r_B}\fint_{2B}|u|\,d\mu.$$

Let us consider the remaining case: $3B\cap M_0\neq\emptyset$ and $r_B>100$. For any $x\in B$, if $d(x,x_M)\le 10$, then by applying $(RH_{\infty,\loc})$
to the ball $B_x:=B(x,d(x,x_M)+1)$ and $u-u_{2B_x}$ we see that
$$\||\nabla u|\|_{L^\infty(B_x)}\le \frac{C}{d(x,x_M)+1}\fint_{2B_x}\fint_{2B_x}|u(y)-u(z)|\,d\mu(y)\,d\mu(z).$$
Applying $(HE_\alpha)$ we find
$$\||\nabla u|\|_{L^\infty(B_x)}\le \frac{C[1+d(x,x_M)]^\alpha}{[1+d(x,x_M)]r_B^\alpha}\fint_{2B}|u|\,d\mu.$$
For any $x\in B$, if $d(x,x_M)>10$, then by applying $(RH^E_{\infty})$ to $B_x:=B(x,r_x)$, where $r_x=\min\{d(x,x_M)/10,\,{r_B}/{10}\},$
and using $(HE_\alpha)$, we conclude
\begin{eqnarray*}
\||\nabla u|\|_{L^\infty(B_x)}&&\le \frac{C}{r_x}\fint_{2B_x}\fint_{2B_x}|u(y)-u(z)|\,d\mu(y)\,d\mu(z)\le \frac{Cr_x^{\alpha-1}}{r_B^\alpha}\fint_{2B}|u|\,d\mu.
\end{eqnarray*}
Noticing that $d(x,x_M)<5r_B$, and combining the above two estimates, we conclude that for each $x\in B$, it holds
\begin{eqnarray*}
|\nabla u(x)|&&\le \frac{C}{[1+d(x,x_M)]^{1-\alpha}r_B^\alpha}\fint_{2B}|u|\,d\mu.
\end{eqnarray*}
Notice that $p(1-\alpha)<N$. One has via \eqref{rhp-rp2} that
\begin{eqnarray*}
\left(\int_{B}|\nabla u|^p\,d\mu\right)^{1/p}&&\le C\fint_{2B}|u|\,d\mu \left(\int_{B}\frac{1}{[1+d(x,x_M)]^{(1-\alpha)p}r_B^{p\alpha}}\,d\mu(x)\right)^{1/p}\\
&&\le \frac{C}{r_B^\alpha}\fint_{2B}|u|\,d\mu\left(\int_{B(x_{M},5r_B)}\frac{1}{[1+d(x,x_M)]^{(1-\alpha)p}}\,d\mu(x)\right)^{1/p}\\
&&\le C\mu(B)^{1/p}r_B^{-1}\fint_{2B}|u|\,d\mu .
\end{eqnarray*}
That is nothing but $(RH_p)$.

{\bf Step 2. $(RH_p)\Rightarrow (HE_\alpha)$.}

As $(RH^E_\infty)$ and hence $(RH_{\infty,\loc})$ hold, for a ball $B$ satisfying $3B\cap M_0=\emptyset$ or $r_B\le 100$,
one sees that for each  $v$ satisfying $\mathcal{L} v=0$ in $3B$, it holds that
$$|v(x)-v(y)|\le C\frac{d(x,y)}{r_B}\fint_{2B}|v|\,d\mu.$$

Thus we only need to verify $(HE_\alpha)$ for balls $B$ with large radius and $3B\cap M_0\neq\emptyset$.
By Theorem \ref{n-poin}, $(P_{p})$ holds since $p>N\vee 2$.  By using $(RH_p)$ and $(P_p)$,
we apply \cite[Corollary 1.10]{cjks16}  to see that $(RH_{p+\epsilon})$ holds for some $\epsilon>0$.
 Therefore, for each  $v$ satisfying $\mathcal{L} v=0$ in $3B$, it holds
$$\left(\fint_{B}|\nabla v|^{p+\epsilon}\,d\mu\right)^{1/(p+\epsilon)}\le \frac{C}{r_B}\fint_{2B}|v|\,d\mu.$$

Let $x,y\in B$.
If  $d(x,y)\ge r_B/100$, then by Lemma \ref{harnack} one has that
$$|v(x)-v(y)|\le C\left(\frac{d(x,y)}{r_B}\right)^\alpha(|v(x)|+|v(y)|)\le C\left(\frac{d(x,y)}{r_B}\right)^\alpha\fint_{2B}|v|\,d\mu.$$
Suppose that $d(x,y)< r_B/100$.
Using $(P_{p+\epsilon})$, $(D_{N})$, $(RH_{p+\epsilon})$, and a standard telescopic argument (see Remark \ref{rem-telescopic}) gives that
\begin{eqnarray*}
&&|v(x)-v(y)|\\
&&\le \sum_{j=0}^\infty |v_{B(x,2^{-j+1}d(x,y))}-v_{B(x,2^{-j}d(x,y))}| + |v_{B(x,2d(x,y))}-v_{B(y,d(x,y))}|+\sum_{j=1}^\infty |v_{B(y,2^{-j+1}d(x,y))}-v_{B(y,2^{-j}d(x,y))}| \\
&&\le Cd(x,y)\left(\sum_{j=0}^\infty2^{-j+1}\left(\fint_{B(x,2^{-j+1}d(x,y))}|\nabla v|^{p+\epsilon}\,d\mu\right)^{1/(p+\epsilon)}+\left(\fint_{B(x,2d(x,y))}|\nabla v|^{p+\epsilon}\,d\mu\right)^{1/(p+\epsilon)}\right.\\
&&\quad\quad\left.+\sum_{j=1}^\infty2^{-j}\left(\fint_{B(y,2^{-j+1}d(x,y))}|\nabla v|^{p+\epsilon}\,d\mu\right)^{1/(p+\epsilon)}\right)\\
&&\le Cd(x,y)\left\{\sum_{j=0}^\infty 2^{-j}\left(\frac{r_B^N}{2^{-jN}d(x,y)^NV(x,r_B/8)}\right)^{1/(p+\epsilon)}\left(\int_{B(x,r_B/8)}|\nabla v|^{p+\epsilon}\,d\mu\right)^{1/(p+\epsilon)}\right.\\
&&\quad\quad\quad\quad\quad\quad\left.+ \sum_{j=1}^\infty 2^{-j}\left(\frac{r_B^N}{2^{-jN}d(x,y)^NV(y,r_B/8)}\right)^{1/(p+\epsilon)}\left(\int_{B(y,r_B/8)}|\nabla v|^{p+\epsilon}\,d\mu\right)^{1/(p+\epsilon)} \right\}\\
&&\overset{d(x,y)<r_B/100}{\leq}Cd(x,y)\left\{\sum_{j=0}^\infty2^{-j}\left(\frac{r_B^N}{2^{-jN}d(x,y)^NV(x,r_B/8)}\right)^{1/(p+\epsilon)}
\left(\int_{B(x,r_B/4)}|\nabla v|^{p+\epsilon}\,d\mu\right)^{1/(p+\epsilon)} \right\}\\
&&\le Cr_B\left(\frac{d(x,y)}{r_B}\right)^{1-\frac{N}{p+\epsilon}} \left(\fint_{B(x,r_B/4)}|\nabla v|^{p+\epsilon}\,d\mu\right)^{1/(p+\epsilon)}\le C\left(\frac{d(x,y)}{r_B}\right)^{1-\frac{N}{p+\epsilon}}\fint_{B(x,r_B/2)}|v|\,d\mu\\
&&\le C\left(\frac{d(x,y)}{r_B}\right)^{1-\frac{N}{p+\epsilon}}\fint_{2B}|v|\,d\mu.
\end{eqnarray*}
Above in the fourth inequality we used the facts $d(x,y)<r_B/100$, $V(y,r_B/8)\sim V(x,r_B/8)$ and $B(y,r_B/8)\subset B(x,r_B/4)$, and the last inequality holds since $x\in B$, $B(x,r_B/2)\subset 2B$,
and $V(x,r_B/2)\sim V(\frac 12B)\sim V(2B)$.
The above estimate implies $(HE_{\alpha})$ for $\alpha=1-\frac{N}{p+\epsilon}$, and completes the proof.
\end{proof}

\section{Extensions to Dirichlet metric measure spaces}\label{extension-mms}
\hskip\parindent In this section, we discuss extensions of main results to the setting of Dirichlet metric measure spaces.
Since in a non-smooth setting, local Poincar\'e  inequality (see Lemma \ref{pend-ploc}) and  local smoothness of harmonic functions (see Lemma \ref{har-pend-ploc}) do not follow automatically from the assumptions on ends, we need to consider them as additional
assumptions.
However, other assumptions are the same as in the smooth settings.
As the proofs are basically identical to the smooth settings (see \cite{acdh,cjks16}), we will sketch the proofs in the section.

Let $X$ be a locally compact, separable, metrisable, and connected
space equipped with a  Borel measure $\mu$ that is finite on compact sets and strictly positive on non-empty open sets.
Consider a strongly local and regular Dirichlet form $\E$ on $L^2(X,\mu)$ with dense domain $\D\subset L^2(X,\mu)$ (see \cite{fot}   for precise definitions).  According to Beurling and Deny \cite{bd59},  such a form can be written as
$$\E (f, g)= \int_X \,d\Gamma(f,g)$$
for all $f, g\in \D$, where $\Gamma$ is a measure-valued non-negative
 and symmetric bilinear form defined by the formula
$$\int_X\varphi\,d\Gamma(f,g):=\frac12\left[\E (f,\varphi g) + \E (g,\varphi f)-\E (fg,\varphi)\right]$$
for all $f,g\in \D\cap L^\infty(X,\mu)$ and $\varphi\in \D\cap \ccc_0(X).$
Here and in what follows, $\ccc(X)$ denotes the space of
continuous functions  on $X$ and $\ccc_0(X)$ the space of functions
in $\ccc(X)$ with compact support. We shall assume in addition that $\E$  admits a ``{\it carr\'e du champ}",
meaning that $\Gamma(f,g)$ is absolutely continuous with respect to $\mu$,
for all $f, g\in \D$. In what follows, for
simplicity of notation, we  will denote by  $\langle D f,D
g\rangle$  the energy density
$\frac{\,d\Gamma(f,g)}{\,d\mu}$, and by $|D f|$  the square root
of $\frac{\,d\Gamma(f,f)}{\,d\mu}$.

Since $\E$ is strongly local, $\Gamma$ is local and satisfies the
Leibniz rule and the chain rule; see \cite{fot}. Therefore we can
define $\E(f, g)$ and $\Gamma(f, g)$ locally. Denote by $\D_\loc$
the collection of all $f\in L^2_\loc(X)$ for which, for each
relatively compact set $K\subset X$, there exists a function
$h\in\D$ such that $f = h$ almost everywhere on $K$.  The intrinsic
(pseudo-)distance on $X$ associated to $\E$ is then defined by
$$d(x, y):=\sup\left\{f(x)-f(y):\, f\in\D_\loc\cap\ccc(X),\, |D f|\le 1\mbox{ a.e.}\right\}.$$
%Here $\Gamma(u,u)\le \mu$ means that $\Gamma (u, u)$ is absolutely
%continuous with respect to $\mu$ and $\frac{\,d\Gamma}{\,d\mu}(u,
%u)\le 1$ almost everywhere.
We always assume  that $d$ is indeed a distance (meaning that for $x\not= y$, $0<d(x,y)<+\infty$)
 and that the topology induced by $d$ is equivalent to the original topology on $X$.
%\comment{assume also that $d$ is a true distance {\color{red} It seems that, the assumption of $X$ being locally compact, connected, and
%the topology induced by $d$ is equivalent to the original
%topology on $X$ are sufficient to imply that  $d$ is a true distance, c.f. Sturm \cite{st1}. am i missing something?
%} OK, I'll have a look at Sturm}
Moreover, we assume that $(X,d)$ is a complete metric space.

Corresponding to such a Dirichlet form $\E$, there exists an operator,  denoted by $\mathscr{L} $,
acting on a dense domain $\mathscr{D}(\mathscr{L})$ in
$L^2(X,\mu)$, $\mathscr{D}(\mathscr{L})\subset \mathscr{D}$, such that for all
$f\in \mathscr{D}(\mathscr{L})$ and each
$g\in \mathscr{D}$,
$$\int_X f(x)\mathscr{L} g(x)\,d\mu(x)=\mathscr{E}(f,g).$$
The opposite of $\mathscr{L} $ is the infinitesimal generator of the heat semigroup $H_t=e^{-t\mathscr{L}}$, $t>0$.

We assume that $X$ is the union of a compact set $X_0$ and some ends $\{E_i\}_{1\le i\le k}$, $k\in\cn$.
We simply adapt all the notions from previous sections with the Laplace-Beltrami operator $\mathcal{L}$ replaced by
$\mathscr{L}$, the Riemannian gradient $\nabla $ replaced by $D$, and $M_0$ replaced by $X_0$;
see \cite{bcf14,cjks16} for more studies in such settings.

The following result generalizes Theorem \ref{main-n} and Theorem \ref{main} to the metric setting.
\begin{thm}\label{main-small-mms}
Assume that the non-compact Dirichlet  metric measure space $(X,d,\mu,\E)$ satisfies
$(D_{N})$ and $(RD_n)$ with $2<n\le N<\infty$.
Suppose that $(UE)$, $(P_{2,\loc})$ and $(P^E_2)$ hold.  Let $p\in (2,n)$. Then the following statements are equivalent.

(i) $(R_p)$ holds;

(ii) $(RH_p)$ holds;

(iii) $(RH^E_p)$ and $(RH_{p,\loc})$ hold;

(iv) $(G_p)$ holds.
\end{thm}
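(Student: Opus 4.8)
The plan is to mimic the proofs of Theorem~\ref{main-n} and Theorem~\ref{main} given in the smooth setting, checking that the only places where Riemannian structure was used are precisely the two auxiliary facts we now take as hypotheses, namely the local Poincar\'e inequality $(P_{2,\loc})$ and the local reverse H\"older inequality $(RH_{p,\loc})$ for gradients of harmonic functions. Recall that in the manifold case these two facts were deduced from a lower Ricci bound on a neighbourhood of $X_0$ (Lemma~\ref{pend-ploc}, via Buser's inequality, and Lemma~\ref{har-pend-ploc}, via Yau's gradient estimate). Since we have assumed $(P_{2,\loc})$ and, as part of (iii), $(RH_{p,\loc})$, the chain of arguments goes through verbatim. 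Concretely, I would organize the equivalences as a cycle: $(R_p)\Rightarrow(G_p)\Rightarrow(RH_p)\Rightarrow(R_p)$, and separately $(RH_p)\Leftrightarrow\big[(RH^E_p)\text{ and }(RH_{p,\loc})\big]$.

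First, $(R_p)\Rightarrow(G_p)$ is automatic from the analyticity of the heat semigroup and holds in any such Dirichlet space (see \cite{acdh,cjks16}); no changes are needed. Next, $(G_p)\Rightarrow(RH_p)$: this is \cite[Theorem~1.5]{cjks16}, whose hypotheses are exactly $(D_N)$, $(UE)$ and a local Poincar\'e inequality---all available here since we assume $(P_{2,\loc})$ directly rather than extracting it from curvature. For $(RH_p)\Rightarrow(R_p)$ I would run the argument of the proof of Theorem~\ref{main-n}: first upgrade $(RH_p)$ to $(RH_{p+\epsilon})$ by the open-endedness Lemma~\ref{lem-open}, whose proof uses only the Caccioppoli inequality on ends (from $(P^E_2)$), Gehring's lemma, and the passage $(RH^E_{p+\epsilon})\Rightarrow(RH_{p+\epsilon})$ through Lemma~\ref{stable-rhp} and Lemma~\ref{har-pend-ploc}---here again, wherever Lemma~\ref{har-pend-ploc} was invoked we instead cite the assumed $(RH_{p,\loc})$. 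Then $(RH_{p+\epsilon})$ gives $(G_{p+\epsilon})$ (by \cite[Theorem~1.5]{cjks16} again), and $(G_{p_0})$ with $p_0=p+\epsilon\in(2,n)$ feeds into the good-$\lambda$ machinery of Section~3: Lemma~\ref{sharp-maximal}, Lemma~\ref{key-lemma}, the Riesz-potential bound Lemma~\ref{potential}, the exponential Poincar\'e inequality Proposition~\ref{exp-poincare} (which needs only $(D_N)$ and $(P_{2,\loc})$), Proposition~\ref{key-estimate}, Proposition~\ref{small-ball}, Proposition~\ref{large-ball}, and finally Theorem~\ref{main-weak-p}. Each of these was already stated for general $(D_N)$, $(UE)$, $(P^E_2)$, $(P_{2,\loc})$ inputs, so the conclusion $(R_q)$ for all $q\in(2,p+\epsilon)$---in particular $(R_p)$---follows.

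For the remaining equivalence, (ii)$\Leftrightarrow$(iii): the implication $(RH_p)\Rightarrow(RH^E_p)$ is trivial (restrict to balls with $3B\cap X_0=\emptyset$), and $(RH_p)\Rightarrow(RH_{p,\loc})$ is trivial (restrict to small balls). Conversely, $(RH^E_p)$ together with $(RH_{p,\loc})$ yields $(RH_p)$ by exactly the telescoping/chain-of-balls argument in the proof of Lemma~\ref{stable-rhp}: one splits a large ball $B$ with $3B\cap X_0\ne\emptyset$ into a local piece near $X_0$ (handled by $(RH_{p,\loc})$ and the Harnack inequality Lemma~\ref{harnack}) and a far piece (handled by $(RH^E_p)$), then integrates the pointwise bound $\fint_{B_x}|Du|^p\,d\mu\lesssim [1+d(x,x_X)]^{-p}\big(\fint_{2B}|u|\,d\mu\big)^p$ against $d\mu(x)$ using $(RD_n)$ and $p<n$ via the estimate \eqref{rhp-rp2}. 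Note that Lemma~\ref{harnack} itself requires only $(D_N)$ and $(UE)$, so it is available. Assembling: (iii)$\Rightarrow$(ii)$\Rightarrow$(i)$\Rightarrow$(iv)$\Rightarrow$(ii), and (ii)$\Rightarrow$(iii), which closes all four equivalences.

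The main obstacle is not any single deep step but the bookkeeping: one must verify that \emph{every} appeal in Sections~2 and~3 to Lemma~\ref{pend-ploc} or Lemma~\ref{har-pend-ploc} is replaceable by the corresponding standing hypothesis $(P_{2,\loc})$ or $(RH_{p,\loc})$, and that the cited external results \cite[Theorem~1.5, Corollary~1.10]{cjks16} and \cite[Lemma~2.2, Lemma~2.4]{cjks16} were proved in, or extend routinely to, the Dirichlet metric measure space setting---which they were, as those papers work in exactly this generality. The other point that deserves a sentence of care is that the chain argument in Lemma~\ref{stable-rhp} and in Proposition~\ref{key-estimate} uses geodesic (or at least length-space) structure of $(X,d)$; this is guaranteed because the intrinsic distance of a strongly local Dirichlet form with carr\'e du champ makes $(X,d)$ a length space, and completeness was assumed. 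Beyond these verifications, no new ideas are needed: the theorem is a transcription of Theorems~\ref{main-n} and~\ref{main} with the two curvature-derived lemmas promoted to hypotheses.
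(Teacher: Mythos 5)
Your proposal is correct and follows essentially the same route as the paper's own proof: invoking \cite{cjks16} for $(G_p)\Leftrightarrow(RH_p)$ (the paper cites Theorem~1.6, the Dirichlet-space formulation, rather than Theorem~1.5), deducing (ii)$\Leftrightarrow$(iii) by re-running the chain argument of Lemma~\ref{stable-rhp}, and closing (iii)$\Rightarrow$(i) through Lemma~\ref{lem-open} followed by the good-$\lambda$ machinery of Theorem~\ref{main-weak-p}, all with $(P_{2,\loc})$ and $(RH_{p,\loc})$ substituted for the curvature-derived Lemmas~\ref{pend-ploc} and~\ref{har-pend-ploc}. The one caveat (which the paper is also silent on) is that Lemma~\ref{lem-open} in fact needs $(RH_{p+\epsilon,\loc})$ rather than the hypothesised $(RH_{p,\loc})$; this is filled by observing that $(RH_{p,\loc})$ together with the local Caccioppoli inequality (available from $(P_{2,\loc})$) gives a locally scale-invariant reverse H\"older inequality for $|Du|$ that self-improves by Gehring's lemma exactly as on the ends.
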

\begin{rem}\rm
Comparing to Theorem \ref{main-n} and Theorem \ref{main}, $(P_{2,\loc})$ is an additional assumption. Notice that, in
the smooth setting, $(P_{2,\loc})$ follows from $(P^E_2)$ as in Lemma \ref{pend-ploc}, however,  in the non-smooth setting, this is not true in general.
Also in the term (iii), we need  $(RH_{p,\loc})$ additionally, since in metric setting, harmonic functions
are not necessarily smooth (see \cite{cjks16}), and $(RH_{p,\loc})$ does not follow from $(RH^E_{p})$, comparing to Lemma \ref{har-pend-ploc}.
For instance, one can glue two Euclidean ends via a smooth part removing a suitable fractal, where the local Poincar\'e inequality and local smoothness of harmonic functions may not hold.
\end{rem}

\begin{proof}[Proof of Theorem \ref{main-small-mms}]
By  \cite[Theorem 1.6]{cjks16}, we have the equivalence of $(RH_p)$ and $(G_p)$.
Moreover, the same proof of Lemma \ref{stable-rhp} works in the metric setting, which implies that $(RH_p)$
is equivalent to $(RH^E_p)$ together with  $(RH_{p,\loc})$, for $p\in (2,n)$.

It remains to show that $(R_p)$ is equivalent to $(G_p)$. It holds automatically  that $(R_p)$ implies $(G_p)$, see \cite{acdh,cjks16} for instance.
On the other hand, the same proof of Theorem \ref{main-weak-p} gives that $(G_p)$ implies $(R_q)$ for any $q\in (2,p)$.
By the same proof of Lemma \ref{lem-open}, one sees that there exists $\ez>0$ such that  $(G_{p+\ez})$  holds, which then implies $(R_p)$,
and completes the proof.
\end{proof}

We have the following metric version of Theorem \ref{main-one}. Recall  that $N\vee 2$ stands for $\max\{N,2\}$.
\begin{thm}\label{main-large-mms}
Assume that the non-compact Dirichlet  metric measure space $(X,d,\mu,\E)$ satisfies  $(D_{N})$ with $0<N<\infty$.
Suppose that $(UE)$, $(P_{2,\loc})$ and $(P^E_2)$ hold. Let $p\in (N\vee 2,\infty)$. Then the following statements are equivalent.

(i) $(R_p)$ holds;

(ii) $(RH_p)$ holds;

(iii) $(G_p)$ holds.
\end{thm}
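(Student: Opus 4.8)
The plan is to mirror the proof of Theorem \ref{main-one} as closely as possible; the only points needing care are that the two auxiliary facts used there which genuinely exploited the Riemannian structure — the deduction of $(P_{2,\loc})$ from $(P^E_2)$ via Buser's inequality (Lemma \ref{pend-ploc}) and the deduction of local smoothness of harmonic functions via Yau's gradient estimate — are either now placed among the hypotheses (as $(P_{2,\loc})$ is) or simply not needed for the range $p>N\vee 2$.

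First I would verify that the statement of Theorem \ref{n-poin} holds verbatim in the present Dirichlet metric measure space setting: for every $p>N\vee 2$ the scale-invariant Poincar\'e inequality $(P_p)$ is valid. Inspecting that proof, the ingredients used are that $(X,d)$ is a geodesic (length) space, the doubling bound $(D_N)$, the local Poincar\'e inequality $(P_{2,\loc})$, the Poincar\'e inequality on ends $(P^E_2)$, the Haj\l asz--Koskela reduction to a weak Poincar\'e inequality, telescopic chain-of-balls arguments, and $L^q$-boundedness of the Hardy--Littlewood maximal function for $q>N$. All of these make sense, and hold, here: under the standing assumptions on the Dirichlet form the space $(X,d)$ is a length space, and, being complete and locally compact, it is geodesic, so the explicit chain of balls $\{B_j\}$ along a minimizing curve issuing from $X_0$ can be built exactly as before (with $M_0$ replaced by $X_0$ and $\mathrm{diam}(X_0)=1$); and the single appeal to Lemma \ref{pend-ploc} in that proof is unnecessary since $(P_{2,\loc})$ is now assumed outright. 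Hence $(P_p)$ holds for all $p>N\vee 2$.

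With $(D_N)$, $(UE)$ and $(P_p)$ available, I would then invoke the criterion of \cite[Theorem 1.9]{cjks16}, which is formulated in the Dirichlet metric measure space generality, to conclude that $(R_p)$, $(RH_p)$ and $(G_p)$ are equivalent for $p\in(N\vee 2,\infty)$. Recall that $(R_p)\Rightarrow(G_p)$ is automatic by analyticity of the heat semigroup (see \cite{acdh,cjks16}) and $(G_p)\Leftrightarrow(RH_p)$ is part of \cite{cjks16}, so the substantive consequence of the validity of $(P_p)$ is the implication $(G_p)\Rightarrow(R_p)$; one may also record, as in Remark \ref{remark-p2}, that $(P_p)$ together with $(G_p)$ forces $(P_2)$. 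The conclusion of Theorem \ref{main-one} asserting the existence of a single end is deliberately omitted here, since its proof relied on \cite[Theorem C]{ca16}, a result specific to manifolds.

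I expect the first step to be the main — indeed essentially the only — obstacle: one must make genuinely sure that no manifold-specific fact is hidden in the proof of Theorem \ref{n-poin}. The potential worry is the construction near $X_0$, where in the smooth case one used a minimizing geodesic issuing from $M_0$ split into dyadic-length pieces; in a complete locally compact length space such a geodesic still exists, and both the splitting and the verification that $2B_j'\cap X_0=\emptyset$ use only the metric and the doubling property, so the argument transfers unchanged. Everything else — the $(P_{2,\loc})$-telescoping close to $X_0$, the $(P^E_2)$-telescoping far from it, H\"older's inequality, and the geometric series over $j$ (convergent thanks to $q>N$) — is already stated in purely metric-measure terms, so it carries over with no modification.
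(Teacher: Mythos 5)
Your proposal is correct and follows essentially the same two-step argument as the paper: first transfer Theorem \ref{n-poin} to the Dirichlet form setting (made unproblematic since $(P_{2,\loc})$ is now a hypothesis rather than derived from Buser's inequality), then invoke \cite[Theorem 1.9]{cjks16}. The paper states this in two sentences; your version helpfully spells out why the length-space/geodesic structure and the chain argument still carry over, and correctly identifies why the single-end conclusion of Theorem \ref{main-one} is dropped.
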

\begin{proof}
Notice that, by the same proof of Theorem \ref{n-poin},  $(P_{2,\loc})$ and $(P^E_2)$ imply that Poincar\'e inequality $(P_q)$ holds
for any $p\in (N\vee 2,\infty)$. \cite[Theorem 1.9]{cjks16} then gives the desired conclusion.
\end{proof}

\section{Applications}\label{sec-ue}
\hskip\parindent
A key tool in the paper is the Gaussian upper bound of heat kernel, i.e.,
there exist $C,c>0$ such that for all $t>0$ and $x,y\in M$,
$$p_t(x,y)\le
  \frac{C}{V(x,{\sqrt t})}\exp\lf\{-\frac{d^2(x,y)}{ct}\r\}.\leqno(UE)
$$

By \cite{gri92,sal,sal2}, it is well known that, $(D)$ together with
 $(UE)$ is equivalent to a Faber-Krahn inequality, and also equivalent to
 a local Sobolev inequality; see also \cite{bcs15}.
Recent result by Grigor'yan and Saloff-Coste  \cite{gri-sal09,gri-sal16} gives a very useful solution to
the stability of $(UE)$ under gluing operations.
The following result follows from \cite[Theorem 3.5]{gri-sal16}, see also \cite[Corollary 4.6]{gri-sal09}.

\begin{thm}\label{upper-heat}
Let $M$ be a  manifold with finitely many ends $\{E_i\}_{1\le i\le k}$, $k\in\cn$. Suppose that
$M$ satisfies $(D)$.
If for each $i$, there exists a manifold $M_i$ satisfying $(D)$ and $(UE)$, and  a compact subset $K_i\subset M_i$, such that $E_i$ is isometric to $M_i\setminus K_i$, then $(UE)$ holds on $M$.
\end{thm}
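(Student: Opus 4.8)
The plan is to translate $(UE)$ into a functional--geometric condition that localises well — a relative Faber--Krahn inequality — and then to verify that condition on $M$ by importing the corresponding inequalities from the model manifolds $M_i$. Recall that, under $(D)$, $(UE)$ is equivalent to the relative Faber--Krahn inequality (see \cite{gri92,sal2}, and also \cite{bcs15}): there exist $c,\nu>0$ such that for every ball $B=B(x,r)$ and every nonempty open $\Omega\subset B$,
$$\lambda_1(\Omega)\ge \frac{c}{r^{2}}\left(\frac{V(x,r)}{\mu(\Omega)}\right)^{2/\nu},$$
where $\lambda_1(\Omega)$ is the bottom of the spectrum of the Dirichlet Laplacian on $\Omega$. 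Applying the same equivalence on each $M_i$ (which satisfies $(D)$ and $(UE)$) gives a relative Faber--Krahn inequality on each $M_i$, with a priori different exponents $\nu_i$; since $\Omega\subset B(x,r)$ forces $\mu(\Omega)\le V(x,r)$, a relative Faber--Krahn inequality with exponent $\nu_i$ implies the same inequality with any exponent $\ge\nu_i$, so after replacing all exponents by $\nu:=\max_i\nu_i$ (enlarged further below if needed) we may work with a single $\nu$. Since $M$ is assumed to satisfy $(D)$, it then suffices to prove the relative Faber--Krahn inequality on $M$; once this is done, \cite{gri92,sal2} returns $(UE)$ on $M$.

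I would verify the relative Faber--Krahn inequality on $M$ ball by ball, distinguishing three cases for $B=B(x,r)$. If $B(x,4r)\cap M_0=\emptyset$, then, being connected, $B(x,4r)$ lies in a single end $E_i$; identifying $E_i$ with $M_i\setminus K_i$, this ball, its volume, and the Dirichlet spectrum of every $\Omega\subset B$ agree with the corresponding objects computed in $M_i$, so the desired inequality is exactly the relative Faber--Krahn inequality of $M_i$. If $B(x,4r)\cap M_0\neq\emptyset$ and $r\le R_0$ for a fixed constant $R_0$ (say $R_0=10$), then $B$ lies in the fixed bounded region $\{d(\cdot,M_0)\le R_0+1\}$ of the smooth manifold $M$, where the Ricci curvature is bounded from below; the local Sobolev/Faber--Krahn inequality there (from the Ricci lower bound) together with $(D)$ — which controls $V(x,r)$ on this region from above and below — yields the relative Faber--Krahn inequality for such $B$, upon enlarging $\nu$ if necessary.

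The remaining case, large balls $B=B(x,r)$ with $r>R_0$ and $B(x,4r)\cap M_0\neq\emptyset$, is the crux: these balls straddle the gluing region and simultaneously see several ends through the compact core $M_0$. This is exactly the situation treated by the gluing machinery of Grigor'yan and Saloff-Coste, and the present statement is essentially \cite[Theorem 3.5]{gri-sal16} (see also \cite[Corollary 4.6]{gri-sal09}). For a test function $f\in H^1_0(\Omega)$, $\Omega\subset B$, one decomposes $f$ by a partition of unity subordinate to the bounded neighbourhood $\{d(\cdot,M_0)<2\}$ of the core and to the truncated ends $E_i\cap\{d(\cdot,M_0)>1\}$; since the transition region has size $O(1)$, the cutoffs have gradients bounded independently of $r$. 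The core piece is estimated by the bounded-scale Faber--Krahn inequality from the previous case, and each end piece by the relative Faber--Krahn inequality of $M_i$ after embedding it isometrically into $M_i$ and passing to a ball of $M_i$ of comparable radius; summing these contributions over the finitely many ends and using the global doubling $(D)$ on $M$ to compare the volumes $V^{M_i}$ with $V^M(x,r)$ produces the relative Faber--Krahn inequality on $B$. The main obstacle is precisely this last step: at large scales $\lambda_1(\Omega)$ is only of order $r^{-2}$, while crude patching across an interface produces additive errors of order one, which would destroy the estimate; the point that saves it is that the interface cutoffs can be taken at the fixed scale of $M_0$ (of diameter $1$), making those errors harmless. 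Carrying this out carefully is the heart of \cite[Theorem 3.5]{gri-sal16}, and once the relative Faber--Krahn inequality holds on all of $M$ the theorem follows from \cite{gri92,sal2}.
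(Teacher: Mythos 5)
The paper does not actually prove this theorem: it states it as a direct import from Grigor'yan and Saloff-Coste, citing \cite[Theorem~3.5]{gri-sal16} (see also \cite[Corollary~4.6]{gri-sal09}), with no argument of its own. Your sketch is a sensible road-map of what goes on inside that cited theorem: reduce $(UE)$ under $(D)$ to the relative Faber--Krahn inequality (with the correct observation that the exponent can be uniformised upward, since $\mu(\Omega)\le V(x,r)$), handle balls lying entirely inside one end by transfer through the isometry $E_i\cong M_i\setminus K_i$, handle bounded-scale balls near the core by local estimates, and then confront the genuine difficulty of large balls straddling the gluing region. But for that last and decisive case you explicitly defer to \cite[Theorem~3.5]{gri-sal16}, which is precisely what the paper does. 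So in substance your proposal and the paper take the same route: both rest on the Grigor'yan--Saloff-Coste surgery result, yours just comes with an expository road-map attached. As a road-map the sketch is sound, and I see no errors; just be aware that it is not an independent proof, since the crux is exactly the step you outsource to the reference.
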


Above by ``each $M_i$ satisfies $(UE)$", we mean the heat kernel on $M_i$ satisfies $(UE)$, with nothing to do with the gluing manifold $M$.

From Theorem \ref{upper-heat}, we see that,  if $M$ is obtained by gluing some Riemannian  manifolds with non-negative Ricci curvature,
 simply connected nilpotent Lie groups with polynomial growth as well as conic manifolds, together through a compact manifold smoothly, then $M$ satisfies $(UE)$, since $(UE)$ holds on the aforementioned manifolds; see  \cite{Al92,acdh,hak,cjks16} for instance.

As a consequence, our Theorem \ref{main-n}, Theorem \ref{main} and Theorem \ref{main-one} work, if $M$ is obtained by gluing Riemannian  manifolds with non-negative Ricci curvature,  simply connected nilpotent Lie groups as well as conic manifolds, together through a compact manifold smoothly.

Another class of gluing manifolds to which our result can be applied is the manifold obtained by gluing several
cocompact covering Riemannian manifolds  with  polynomial growth deck transformation group together. Here, a manifold  $\widehat{M}$ has a cocompact covering,
 if there is a finitely generated discrete group $G$ with polynomial volume growth of some order
$D >2$, that acts properly and freely on $\widehat{M}$ by isometries, such that the orbit space
$M_G = \widehat M/G$ is a compact manifold. See \cite{dng04a,cjks16} for instance.

Note also, our results also work on the these settings with the Laplace-Beltrami operator replaced by
any uniformly elliptic operators of divergence form, by Theorem \ref{main-small-mms} and Theorem \ref{main-large-mms}.

Let us finish the proof of Theorem \ref{stability-gluing-main} and Corollary \ref{stability-gluing}.
\begin{proof}[Proof of Theorem \ref{stability-gluing-main}]
Notice that $(P_2^E)$ holds automatically, as each $M_i$ satisfies $(P_2^E)$.
Moreover, $(UE)$ follows from Theorem \ref{upper-heat} since $M_i$ supports $(UE)$.

For each $p\in (2,n)$, since $(R_p)$ implies $(RH_p)$ on each $M_i$ by Theorem \ref{main-n}, we see that  $(RH_p^E)$ holds on $M$ and the
conclusion follows from Theorem \ref{main}.
\end{proof}

\begin{proof}[Proof of Corollary \ref{stability-gluing}]
Since $(D)$ plus $(P_2)$ imply  $(UE)$ and $(P^E_2)$, we see that Theorem \ref{stability-gluing-main} applies.

(i) By \cite[Theorem 0.4]{ac05}, for each $M_i$, there exists $\epsilon_i>0$ such that the Riesz transform is bounded on $L^{2+\epsilon_i}(M_i)$.  This implies that there exists $\epsilon>0$,  possibly smaller  than $\epsilon_i$, $1\le i\le k$,
such that $2+\epsilon<n$ and $(R_{2+\epsilon})$ holds  on each $M_i$. By Theorem \ref{stability-gluing-main} we see that $(R_{2+\epsilon})$ holds.

(ii) Since $(R_p)$ holds on each $M_i$, we apply Theorem \ref{stability-gluing-main} to conclude that $(R_p)$ holds on $M$.
\end{proof}

Consider an $n$-dimensional conic manifold $C(X)$ with compact
basis $X$, $C(X):=\rr^+\times X$, where the metric is given by $dr^2+r^2d_X$.
Let $\lambda_1$ be
the smallest nonzero eigenvalue of the Laplacian on the basis $X$.
By  Li \cite{lh99}, the Riesz transform is bounded on
$L^p(C(X))$ for all $p\in (1,p_0)$ and not bounded for $p\ge p_0$,
where
$$p_0:=n\left(\frac n2-\sqrt{\left(\frac{n-2}{2}\right)^2+\lambda_1}\right)^{-1}$$
if $\lambda_1<n-1$ and $p_0=\infty$ otherwise; see also \cite{acdh}. The following question was also asked in \cite{cch06}.

\begin{quest}[Open Problem 8.1 \cite{cch06}]\label{question-Li}
Is a result similar to H.-Q. Li's valid for smooth manifolds with one conic or asymptotically
conic end? What happens for several conic ends?
\end{quest}
%Since on manifolds with conic ends, the Poincar\'e inequality $(P_2^E)$ holds (cf. \cite{acdh,lh99}), $(UE)$ holds on $M$ by \cite{gri-sal09}.
%By this and the fact that, Ricci curvature  has an asymptotically lower Ricci curvature bounds when $x$ tends to infinity as
%$$Ric_M(x)\ge -\frac{C_M}{[d(x,x_M)+1]^2},$$
 Guillarmou and Hassell \cite{gh08} had solved the above question, which was recovered by  recent work of Carron \cite{ca16}.
Our result also gives a new proof to the above question.

Let us explain how the proof works.

Notice that the measure satisfies $V(x,r)\sim r^n$ for each $x\in M$ and each $r>0$, where $n\ge 2$.
Suppose that the manifold has at least two conic ends.
 If $n\ge 3$, then Corollary \ref{main-1} applies to show that the Riesz transform is bounded on $L^p(M)$,
for any $p\in (1,n)$, while \cite{cch06} (see Theorem \ref{negative}) already implies the Riesz transform cannot bounded for any $p\ge n$ if $n\ge 3$.
If $n=2$, $(R_p)$ holds for any $p\in (1,2]$ by \cite{cd99}, and is not bounded for any $p>2$
by applying Corollary \ref{main-p2-negative} and using the fact that there exists a non-constant harmonic
function of logarithmic growth (cf. \cite[Section 7]{ca16}).

If the manifold has only one conic end, our Theorem \ref{main-p2} and Corollary \ref{main-p2-negative} apply
since  the Ricci curvature satisfies
$$Ric_M(x)\ge -\frac{C_M}{[d(x,x_M)+1]^2}$$
for some $C_M>0$. The existence of harmonic functions of sub-linear growth, and the elliptic H\"older regularity of harmonic functions
can be found in \cite{cz96} and also \cite[Section 7]{ca16}.

Let $(\widetilde{M}, g_0)$ be a simply connected nilpotent Lie group of dimension $n > 2$ (endowed with a left-invariant metric),
and $\nu$ be the homogenous dimension of $\widetilde M$, i.e. for some $o\in \widetilde M$
$$\nu:=\lim_{R\to\infty}\frac{\log V(o,R)}{\log R}.$$
Notice that $\nu\ge n>2$. Let $(M,g)$ be a manifold obtained by gluing $k > 1$ copies of $(\widetilde{M}, g_0)$.
Carron-Coulhon-Hassell  \cite{cch06} showed that $(R_p)$ does not hold if $p\ge\nu $,
and they asked

\begin{quest}[Open Problem 8.3 \cite{cch06}]\label{question-Lie}
Show that the Riesz transform on $(M,g)$ is bounded on $L^p$ for $p\in (1,\nu)$.
\end{quest}
Carron \cite{ca07} had solved the question. Our Corollary \ref{stability-gluing} also provides a proof, by noticing that  $(D_{\nu})$ and $(RD_\nu)$ hold on $M$,
 and on a Lie group of polynomial growth $(P_{2})$ holds and $(R_p)$ holds for all $p\in (1,\infty)$; see \cite{Al92,cjks16,var88}.

\subsection*{Acknowledgments}
\addcontentsline{toc}{section}{Acknowledgments} \hskip\parindent
The author is indebted to Thierry Coulhon for his deep insights of this question, and for
many helpful and inspiring discussions. He wishes to thank Hongquan Li for helpful discussions,
and to thank Gilles Carron for helpful communications, in particular, for sending the author
the paper \cite{ca16}, which inspires Corollary \ref{main-p2-negative}  and Theorem \ref{main-p2} of this paper.
Last but not least, the author wishes to thank the referee for a very detailed report which improves the quality
of the paper.
The author was partially supported by NNSF of China (11922114 \& 11671039).

\noindent Renjin Jiang \\
\noindent  Center for Applied Mathematics\\
\noindent Tianjin University\\
\noindent  Tianjin 300072\\
\noindent China\\
\noindent {rejiang@tju.edu.cn}

\end{document}